\newcommand{\NN}{\mathbb{N}}
\newcommand{\CC}{\mathbb{C}}
\newcommand{\RR}{\mathbb{R}}
\newcommand{\QQ}{\mathbb{Q}}
\newcommand{\ZZ}{\mathbb{Z}}
\newcommand{\EE}{\mathbb{E}}
\newcommand{\Oo}{\mathcal{O}}
\newcommand{\Bo}{\mathcal{B}}
\newcommand{\RE}{ {\rm Re \,} }
\newtheorem{Thm}{Theorem}
\newtheorem{Lem}{Lemma}
\newtheorem{Prop}{Proposition}
\newtheorem{Cor}{Corollary}
\theoremstyle{definition}
\newtheorem{Rem}{Remark}
\newtheorem{Def}{Definition}
\newtheorem{Ex}{Example}
\begin{document}
\subjclass[2020]{35C10, 35E15, 39A13, 40G10.}
\keywords{$q$-difference-differential equations, moment differential equations, formal power series, $k$-summability, multisummability}
\title[Linear $q$-difference-differential equations with constant coefficients]{On the summability and convergence of formal solutions of linear $q$-difference-differential equations with constant coefficients}
\author{Kunio Ichinobe}
\address{Department of Mathematics Education,
Aichi University of Education,
1 Hirosawa, Igaya, Kariya City, Aichi Prefecture 448-8542, Japan}
\email{ichinobe@auecc.aichi-edu.ac.jp}
  \author{S{\l}awomir Michalik}
  \address{Faculty of Mathematics and Natural Sciences,
College of Science,
Cardinal Stefan Wyszy\'nski University,
W\'oycickiego 1/3,
01-938 Warszawa, Poland,
ORCiD: 0000-0003-4045-9548}
\email{s.michalik@uksw.edu.pl}
\urladdr{\url{http://www.impan.pl/~slawek}}
\begin{abstract}
 We consider the Cauchy problem for homogeneous linear $q$-difference-differential equations with constant coefficients. We characterise convergent, $k$-summable and multisummable formal power series solutions in terms of analytic continuation properties and growth estimates of the Cauchy data. We also introduce and characterise sequences preserving summability, which make a very useful tool, especially in the context of moment differential equations.  
\end{abstract}

\maketitle
\section{Introduction}
The main purpose of the paper is the characterisation of summable formal power series solutions of general homogeneous linear $q$-difference-differential equations with constant coefficients in the framework of the theory of Gevrey asymptotics and $k$-summability. More precisely, we consider the Cauchy problem
\begin{equation}
\label{eq:CP}
\left\{
    \begin{array}{l}
     P(D_{q,t},\partial_z)u=0,\\  
     D^j_{q,t}u(0,z)=\varphi_j(z)\ \text{for}\ j=0,\dots,p-1,
     \end{array}
    \right.
\end{equation}
where $(t,z)\in\CC^2$, $\varphi_j(z)$, $j=0,\dots,p-1$, are holomorphic functions in the complex neighbourhood of the origin and $P(D_{q,t},\partial_z)$ is a general linear $q$-difference-differential operator with constant coefficients of order $p$ with respect to $D_{q,t}$. For given $q\neq 1$, the $q$-difference operator $D_{q,t}$ is defined by 
\begin{equation*}
 D_{q,t}u(t,z):=\frac{u(qt,z)-u(t,z)}{qt-t}.
\end{equation*}

The formal solutions of $q$-difference-differential equations were studied in such papers as \cite{L-M-S,Mal,Tah,T-Y}, but only from the point of view $q$-asymptotics and $q$-Borel summability, and under the assumption $|q|>1$. In this situation the coefficients of formal solutions are estimated by powers of $q$.

In our different approach we assume that $q\in[0,1)$ and we study the Gevrey asymptotics and $k$-summability, where the coefficients of formal solutions are estimated by the gamma function.
This approach was previously studied only by Ichinobe and Adachi \cite{I-A} in a very special case of the formal solutions of the Cauchy problem 
\begin{equation}
\label{eq:CP_special}
(D^{\kappa}_{q,t}-\partial_z^{\nu})u=0,\quad u(0,z)=\varphi(z),\quad D^j_{q,t}u(0,z)=0\quad (j=1,\dots,\kappa-1),
\end{equation}
where $(t,z)\in\CC^2$, $\varphi(z)$ is holomorphic in a complex neighbourhood of the origin and $\kappa,\nu\in\NN$.
They have characterised $k$-summability of the formal solution $\hat{u}$ of (\ref{eq:CP_special})
in terms of analytic continuation property and growth estimate of the Cauchy datum $\varphi(z)$.

We generalise the results of \cite{I-A} to the formal solutions of (\ref{eq:CP}). The main idea of the paper is to prove that the sequence $m=(m(n))_{n\geq 0}=([n]_q!)_{n\geq 0}$ preserves summability in the sense that $\hat{u}(t,z)=\sum_{n=0}^{\infty}\frac{u_n(z)}{m(n)}t^n$ is $k$-summable in a direction $d$ if and only if $\hat{v}(t,z)=\sum_{n=0}^{\infty}u_n(z)t^n$ is also $k$-summable in the same direction.
The proof of the equivalence  is based on the moment version of the result of Ichinobe and Adachi \cite{I-A} for the equation (\ref{eq:CP_special}) with $\kappa=\nu=1$.
Using this equivalence and observing that for the sequence $m=([n]_q!)_{n\geq 0}$ the $q$-difference operator $D_{q,t}$ coincides with the $m$-moment differential operator $\partial_{m,t}$ defined by
\begin{equation*}
 \partial_{m,t}\Big(\sum_{n=0}^{\infty}\frac{u_n(z)}{m(n)}t^n\Big):=\sum_{n=0}^{\infty}\frac{u_{n+1}(z)}{m(n)}t^n,
\end{equation*}
we conclude that the solution $\hat{u}(t,z)=\sum_{n=0}^{\infty}\frac{u_n(z)}{m(n)}t^n$ of (\ref{eq:CP}) is $k$-summable in a given direction $d$ if and only if the solution $\hat{v}(t,z)=\sum_{n=0}^{\infty}u_n(z)t^n$ of the initial value problem
\begin{equation}
\label{eq:CPv}
\left\{
    \begin{array}{l}
     P(\partial_{\mathbf{1},t},\partial_z)v=0,\\  
     \partial^j_{\mathbf{1},t}v(0,z)=\varphi_j(z)\ \text{for}\ j=0,\dots,p-1,
     \end{array}
    \right.
\end{equation}
is $k$-summable in the same direction. Here $\partial_{\mathbf{1},t}$ denotes an $\tilde{m}$-moment differential operator for the sequence $\tilde{m}=\mathbf{1}:=(1)_{n\geq 0}$, so the operator $\partial_{\mathbf{1},t}$ satisfies
\begin{equation*}
 \partial_{\mathbf{1},t}\hat{v}(t,z)=D_{0,t}\hat{v}(t,z)=\frac{\hat{v}(t,z)-\hat{v}(0,z)}{t}.
\end{equation*}

Moreover, since $\mathbf{1}=(1)_{n\geq 0}$ is a sequence of moments for the moment function $\mathbf{1}(u)\equiv 1$ for $u\geq 0$,
we may characterise summable and multisummable solutions $\hat{v}(t,z)$ of (\ref{eq:CPv}) using the whole theory of formal solutions of moment differential equations developed in \cite{Mic7} and \cite{Mic8}, and in consequence also the summable and multisummable solutions $\hat{u}(t,z)$ of (\ref{eq:CP}) in terms of analytic continuation properties and growth estimates of the Cauchy data $\varphi_j(z)$, $j=0,\dots,p-1$.

We would like to emphasize that, as introduced in the paper, the idea of sequences preserving summability seems interesting in and of itself. In the paper we start to develop this theory. In particular we show that the family of sequences preserving summability contains the sequences of moments of
order $0$ in the sense of Balser's theory of moment summability \cite{B2}. On the other hand this family is contained sharply
in the family of sequences of positive numbers of
order $0$. Moreover, the sequences preserving summability form a group with respect to the multiplication.
We also find the characterisation of this kind of sequences.

In the paper we apply a new idea of use of auxiliary Cauchy problems for the simple moment equations $(\partial_{m_1,t}-\partial_{m_2,z})u(t,z)=0$, where we study the relation between the initial data $u(0,z)$ and the boundary data $u(t,0)$.
 The auxiliary Cauchy problems and their solutions give us strong tools, which transform the power series $z\mapsto u(0,z)$ onto the power series $t\mapsto u(t,0)$. These tools are intensively used in the proofs of Theorems \ref{th:sequence_summability} and \ref{th:main}.

The paper is organised as follows. In the preliminary sections \ref{sec:2}--\ref{sec:5}, we collect notation and recall the notion of $q$-calculus, Gevrey order, summability, multisummability and moment functions. 
In sections \ref{sec:6} and \ref{sec:7}, we introduce new concepts of sequences, which preserve Gevrey order or summability. Moreover, we find the characterisation of such types of sequences.
We also define $m$-moment differentiation and operators of order zero. 
In section \ref{sec:8}, we prove the main result of the paper, which says that the sequence $([n]_q!)_{n\geq 0}$ preserves summability. The proof is based on a few lemmas about analytic continuation properties of the initial and boundary data of the solutions to some auxiliary moment differential equations. 
In the final section \ref{sec:9}, we characterise summable and multisummable solutions to the equation $P(\partial_{m,t},\partial_z)u=0$, where $m$ is a sequence preserving summability. In particular, it gives such characterisation for the formal power series solutions of the $q$-difference-differential equation $P(D_{q,t},\partial_z)u=0$.

\section{Notation} \label{sec:2} 
An~\emph{unbounded sector $S$ in a direction $d\in\RR$ with an opening $\alpha>0$} in the universal covering space $\widetilde{\CC}$ of
$\CC\setminus\{0\}$ is defined by
\begin{displaymath}
S=S_d(\alpha):=\{z\in\widetilde{\CC}\colon\ z=r\*e^{i\phi},\ r>0,\ \phi\in(d-\alpha/2,d+\alpha/2)\}.
\end{displaymath}
If the opening $\alpha$ is not essential,
 the sector $S_d(\alpha)$ is denoted briefly by $S_d$.
 
 A complex disc $D_r$ in $\CC$ with a radius $r>0$ is a set of the form 
 \begin{displaymath}
 D_r=\{z\in\CC:|z|<r\}.
 \end{displaymath}
 In case that the radius $r$ is not essential, the set $D_r$ will be designated  briefly by $D$. We also denote briefly a \emph{disc-sector}
 $S_d(\alpha)\cup D_r$ (resp. $S_d(\alpha)\cup D$, $S_d\cup D$) by $\hat{S}_d(\alpha,r)$ (resp. $\hat{S}_d(\alpha)$, $\hat{S}_d$).
 
If a function $f$ is holomorphic on a domain $U\subseteq\CC^n$, then it will be denoted by $f\in\Oo(U)$.

Analogously,
the space of holomorphic functions of the variables $z_1^{1/\kappa_{1}},\dots,z_n^{1/\kappa_n}$
($(\kappa_1,\dots,\kappa_n)\in\NN^n$) on $G$ is denoted by $\mathcal{O}_{1/\kappa_{1},\dots,1/\kappa_n}(G)$.

More generally, if $\EE$ denotes a Banach space with a norm $\|\cdot\|_{\EE}$, then
by $\Oo(G,\EE)$ (resp. $\Oo_{1/\kappa_{1},\dots,1/\kappa_n}(G,\EE)$) we shall denote the set of all $\EE$-valued  
holomorphic functions 
(resp. holomorphic functions of the variables $z_1^{1/\kappa_{1}},\dots,z_n^{1/\kappa_n}$) 
on a domain $G\subseteq\CC^n$.
For more information about functions with values in Banach spaces we refer the reader to \cite[Appendix B]{B2}. 
In the paper, as a Banach space $\EE$ we will
take the space of complex numbers $\CC$ (we abbreviate $\Oo(G,\CC)$ to $\Oo(G)$ and
$\Oo_{1/\kappa_{1},\dots,1/\kappa_n}(G,\CC)$ to $\Oo_{1/\kappa_{1},\dots,1/\kappa_n}(G)$)
or the space $\Oo_{1/\kappa}(D_r)\cap C(\overline{D}_r)$ of $1/\kappa$-holomorphic functions on the disc $D_r$ and continuous on its closure $\overline{D}_r$, equipped with the norm $\|\varphi\|_r:=\sup_{z\in D_r}|\varphi(z)|$.

The space of formal power series
$ \hat{u}(t)=\sum_{j=0}^{\infty}u_j t^{j}$ with $u_j\in\mathbb{E}$ is denoted by $\mathbb{E}[[t]]$.

We use the ``hat'' notation ($\hat{u}$, $\hat{v}$, $\hat{f}$, $\hat{\varphi}$) to denote the formal power series.
If the formal power series $\hat{u}$ (resp. $\hat{v}$, $\hat{f}$, $\hat{\varphi}$) is convergent,
we denote its sum by $u$ (resp. $v$, $f$, $\varphi$).

Throughout the paper we will assume that $m=(m(n))_{n\geq 0}$ (resp. $m_1=(m_1(n))_{n\geq 0}$, $m_2=(m_2(n))_{n\geq 0}$) is a sequence of real positive numbers with $m(0)=1$ (resp. $m_1(0)=1$, $m_2(0)=1$). Moreover, for fixed $k>0$ we denote by $\Gamma_{1/k}$ the sequence of positive numbers $\Gamma_{1/k}=(\Gamma(1+n/k))_{n\geq0}$, where $\Gamma(\cdot)$ is the gamma function. 

\section{$q$-calculus} \label{sec:3} 
In this section we introduce the basic notion of $q$-calculus following \cite{G-R}. Throughout the whole paper, we assume that  $q\in[0,1)$. 
If $u\in\EE[[t]]$ or $u\in\Oo(D,\EE)$ then we define the \emph{$q$-difference operator} $D_{q,t}$ as
\begin{equation*}
D_{q,t}u(t):=\frac{u(qt)-u(t)}{qt-t}.
\end{equation*}
For every $n\in\NN_{0}$ we define a $q$-analog of $n$ by 
\begin{equation*}
 [n]_q:=1+q+\dots+q^{n-1}=\frac{1-q^n}{1-q}.
\end{equation*}
We also introduce a $q$-analog of the factorial $n!$
\begin{equation*}
 [n]_q!:=
    \left\{
    \begin{array}{ll}
     1&\textrm{for}\ n=0\\  
     {[1]_q\cdots [n]_q}&\text{for}\ n\geq 1
     \end{array}
    \right..
\end{equation*}
 For every $n\in\NN_0\cup\{\infty\}$ and $a\in\CC$ we define $q$-shift factorial by
\begin{equation*}
 (a;q)_n:=
    \left\{
    \begin{array}{ll}
     1&\text{for}\ n=0\\
     \prod_{j=0}^{n-1}(1-aq^j)&\text{for}\ n\in\NN\\
     \prod_{j=0}^{\infty}(1-aq^j)&\text{for}\ n=\infty
     \end{array}
    \right..
\end{equation*}
Observe that the infinity product $(a;q)_{\infty}$ is convergent for $|q|<1$ and for any $a\in\CC$. We also set 
\begin{equation*}
 (a_1,\dots,a_r;q)_n:=\prod_{j=1}^r(a_j;q)_n\quad\text{for}\quad a_1,\dots,a_r\in\CC\quad\textrm{and}\quad n\in\NN_0\cup\{\infty\}.
\end{equation*}
The basic hypergeometric series is defined as
\begin{equation}
\label{eq:q_binomial}
 {}_{k+1}\phi_k\left(\begin{array}{c}
                       a_1,\dots,a_{k+1}\\
                       b_1,\dots,b_k
                     \end{array}
;q,x\right):=\sum_{n=0}^{\infty}\frac{(a_1,\dots,a_{k+1};q)_n}{(b_1,\dots,b_k;q)_n(q;q)_n}x^n.
\end{equation}

In the paper we will use the following two fundamental formulas for the basic hypergeometric series
\begin{Prop}[$q$-binomial theorem, {\cite[Section 1.3]{G-R}}]
\label{prop:1}
\begin{equation*} 
 {}_{1}\phi_0\left(\begin{array}{c}
                       a\\
                       -
                     \end{array}
;q,z\right)=\sum_{n=0}^{\infty}\frac{(a;q)_n}{(q;q)_n}z^n=\frac{(az;q)_{\infty}}{(z;q)_{\infty}}\quad\textrm{for}\quad |z|<1\quad\textrm{and}\quad |q|<1.
\end{equation*}
\end{Prop}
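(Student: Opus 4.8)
The plan is to obtain a $q$-difference functional equation for the series on the left-hand side and then iterate it down to the origin. Write $f(z):=\sum_{n=0}^{\infty}c_nz^n$ with $c_n:=(a;q)_n/(q;q)_n$. First I would note that $f\in\Oo(D_1)$: since $|q|<1$ the partial products $(a;q)_n$ and $(q;q)_n$ converge to the finite limits $(a;q)_{\infty}$ and $(q;q)_{\infty}\neq0$, so the sequence $(c_n)_{n\geq0}$ is bounded (in the degenerate case $a\in\{q^{-j}:j\in\NN_0\}$ the series is even a polynomial), and hence the radius of convergence is at least $1$. The heart of the matter is the elementary recursion $(1-q^n)c_n=(1-aq^{n-1})c_{n-1}$ for $n\geq1$, which is immediate from the definitions of $(a;q)_n$ and $(q;q)_n$. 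Multiplying it by $z^n$, summing over $n\geq1$ and rearranging (all rearrangements being legitimate on $D_1$ by absolute convergence), the left-hand side becomes $\sum_{n\geq1}c_nz^n-\sum_{n\geq1}c_n(qz)^n=f(z)-f(qz)$ (using $c_0=1$), while the right-hand side becomes $z\sum_{n\geq0}c_nz^n-az\sum_{n\geq0}c_n(qz)^n=zf(z)-azf(qz)$. This yields the functional equation $(1-z)f(z)=(1-az)f(qz)$ for $z\in D_1$, equivalently $f(z)=\frac{1-az}{1-z}f(qz)$ on $D_1$ (recall $1\notin D_1$, and the zeros of $1-q^jz$ for $j\geq1$ lie outside $D_1$).

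Next I would iterate this identity $N$ times to obtain, for every $N\in\NN$ and $z\in D_1$,
\[
f(z)=\frac{(1-az)(1-aqz)\cdots(1-aq^{N-1}z)}{(1-z)(1-qz)\cdots(1-q^{N-1}z)}\,f(q^Nz)=\frac{(az;q)_N}{(z;q)_N}\,f(q^Nz),
\]
and then let $N\to\infty$. Since $|q|<1$ we have $q^Nz\to0$, so $f(q^Nz)\to f(0)=c_0=1$ by continuity of $f$ on $D_1$; moreover $\tfrac{(az;q)_N}{(z;q)_N}\to\tfrac{(az;q)_{\infty}}{(z;q)_{\infty}}$, because for $|q|<1$ both infinite products converge absolutely and $(z;q)_{\infty}\neq0$ on $D_1$ (its zeros $z=q^{-j}$, $j\in\NN_0$, satisfy $|q^{-j}|\geq1$ and so lie outside $D_1$). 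Passing to the limit in the iterated identity gives $f(z)=\tfrac{(az;q)_{\infty}}{(z;q)_{\infty}}$ for all $z\in D_1$, which is the assertion; the equality with the ${}_1\phi_0$-notation is just a restatement of the definition (\ref{eq:q_binomial}).

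The computations are entirely routine and there is no genuine obstacle; the only point deserving a little attention is the passage to the limit $N\to\infty$ in the iterated identity, where one uses the continuity of $f$ on $D_1$ together with the convergence of the $q$-shifted factorials for $|q|<1$ and the nonvanishing of $(z;q)_{\infty}$ there.
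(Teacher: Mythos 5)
Your proof is correct. The paper itself does not prove this proposition; it is quoted from Gasper--Rahman \cite[Section 1.3]{G-R}, and your argument --- deriving the recursion $(1-q^n)c_n=(1-aq^{n-1})c_{n-1}$, converting it into the functional equation $(1-z)f(z)=(1-az)f(qz)$ on $D_1$, iterating to $f(z)=\frac{(az;q)_N}{(z;q)_N}f(q^Nz)$ and letting $N\to\infty$ --- is essentially the classical proof given there, with the convergence and nonvanishing issues ($|q|<1$, zeros of $(z;q)_\infty$ outside $D_1$, $f(q^Nz)\to f(0)=1$) handled correctly.
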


\begin{Prop}[Heine's transformation formula, {\cite[Section 1.4]{G-R}}]
\label{prop:2}
\begin{equation*} 
 {}_{2}\phi_1\left(\begin{array}{c}
                       a,b\\
                       c
                     \end{array}
;q,z\right)=\sum_{n=0}^{\infty}\frac{(a,b;q)_n}{(c;q)_n(q;q)_n}z^n=\frac{(b,az;q)_{\infty}}{(c,z;q)_{\infty}}{}_{2}\phi_1\left(\begin{array}{c}
                       c/b,z\\
                       az
                     \end{array}
;q,b\right),\ |z|<1,\, |q|<1,\, |b|<1.
\end{equation*}
\end{Prop}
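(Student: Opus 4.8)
The plan is to obtain Heine's transformation as a double application of the $q$-binomial theorem (Proposition~\ref{prop:1}), with a single interchange of summation sandwiched between the two applications. Throughout I would use the elementary identity $(x;q)_n=(x;q)_\infty/(xq^n;q)_\infty$, which is immediate from the definitions of $(x;q)_n$ and $(x;q)_\infty$. The first step is to rewrite the coefficient quotient in ${}_2\phi_1(a,b;c;q,z)=\sum_{n\geq 0}\frac{(a;q)_n(b;q)_n}{(c;q)_n(q;q)_n}z^n$ as
\[
\frac{(b;q)_n}{(c;q)_n}=\frac{(b;q)_\infty}{(c;q)_\infty}\cdot\frac{(cq^n;q)_\infty}{(bq^n;q)_\infty}=\frac{(b;q)_\infty}{(c;q)_\infty}\cdot\frac{((c/b)\cdot bq^n;q)_\infty}{(bq^n;q)_\infty}.
\]
Since $|bq^n|\leq|b|<1$, Proposition~\ref{prop:1} applied with parameter $c/b$ and argument $bq^n$ expands the last quotient as $\sum_{m\geq 0}\frac{(c/b;q)_m}{(q;q)_m}\,b^m q^{mn}$.

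Substituting this expansion into the defining series turns ${}_2\phi_1(a,b;c;q,z)$ into
\[
\frac{(b;q)_\infty}{(c;q)_\infty}\sum_{n\geq 0}\sum_{m\geq 0}\frac{(a;q)_n}{(q;q)_n}\cdot\frac{(c/b;q)_m}{(q;q)_m}\,z^n b^m q^{mn}.
\]
The one genuinely analytic point is to justify swapping the two sums. For this I would bound $|(a;q)_n|$ by $(-|a|;|q|)_\infty$ and $|(c/b;q)_m|$ by $(-|c/b|;|q|)_\infty$ (both infinite products converge because $|q|<1$), so that the double series is dominated by a constant multiple of $\sum_{n,m}|z|^n|b|^m|q|^{mn}$, which converges absolutely precisely because $|z|<1$, $|b|<1$ and $|q|<1$. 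After interchanging, the inner sum over $n$ is $\sum_{n\geq 0}\frac{(a;q)_n}{(q;q)_n}(zq^m)^n$, and since $|zq^m|\leq|z|<1$ a second application of Proposition~\ref{prop:1} evaluates it as $(azq^m;q)_\infty/(zq^m;q)_\infty$.

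It then remains to collapse the result. Using $(xq^m;q)_\infty=(x;q)_\infty/(x;q)_m$ for $x=az$ and for $x=z$ gives $\frac{(azq^m;q)_\infty}{(zq^m;q)_\infty}=\frac{(az;q)_\infty}{(z;q)_\infty}\cdot\frac{(z;q)_m}{(az;q)_m}$, hence
\[
{}_2\phi_1(a,b;c;q,z)=\frac{(b;q)_\infty(az;q)_\infty}{(c;q)_\infty(z;q)_\infty}\sum_{m\geq 0}\frac{(c/b;q)_m(z;q)_m}{(az;q)_m(q;q)_m}\,b^m=\frac{(b,az;q)_\infty}{(c,z;q)_\infty}\,{}_2\phi_1\!\left(\begin{array}{c}c/b,z\\ az\end{array};q,b\right),
\]
which is the asserted identity, the final ${}_2\phi_1$ being convergent since $|b|<1$. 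The only obstacle worth flagging is the bookkeeping in the interchange-of-summation step: one has to notice that the hypotheses $|z|<1$ and $|b|<1$ are exactly what make both intermediate applications of the $q$-binomial theorem legitimate and the double series absolutely convergent. Beyond that the argument is routine manipulation of $q$-shifted factorials.
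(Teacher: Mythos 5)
Your proof is correct: the paper gives no proof of Proposition~\ref{prop:2} at all, simply citing \cite[Section 1.4]{G-R}, and your argument --- rewriting $(b;q)_n/(c;q)_n$ via infinite products, applying the $q$-binomial theorem (Proposition~\ref{prop:1}) in the form ${}_1\phi_0(c/b;-;q,bq^n)$, interchanging the absolutely convergent double sum, and applying the $q$-binomial theorem once more in the $n$-sum --- is exactly the classical Heine argument found in that reference. The only small detail left implicit in your domination estimate is the lower bound $|(q;q)_n|\geq(|q|;|q|)_{\infty}>0$ (and likewise for $(q;q)_m$), which is immediate for $|q|<1$ and makes the constant in your majorant legitimate.
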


\section{Gevrey order, summability and multisummability} \label{sec:4} 
In this section we introduce some definitions and fundamental facts connected with a Gevrey order, $k$-summability and multisummability. For more details we refer the reader to \cite{B2} and \cite{LR}.
\begin{Def}
\label{df:growth}
A function $u\in\Oo_{1/\kappa}(\hat{S}_d(\varepsilon,r),\EE)$
is of \emph{exponential growth of order at most $k\in\RR$ as $x\to\infty$
in $\hat{S}_d(\varepsilon,r)$} if for any
$\tilde{\varepsilon}\in(0,\varepsilon)$ and $\tilde{r}\in(0,r)$ there exist 
$A,B>0$ such that
\begin{gather*}
\|u(x)\|_{\EE}<Ae^{B|x|^k} \quad \textrm{for every} \quad x\in \hat{S}_d(\tilde{\varepsilon},\tilde{r}).
\end{gather*}
The space of such functions is denoted by $\Oo_{1/\kappa}^k(\hat{S}_d(\varepsilon,r),\EE)$, or $\Oo^k(\hat{S}_d(\varepsilon,r),\EE)$ if $\kappa=1$.
\end{Def}

\begin{Def}
Let $s\in\RR$. If the sequence $m=(m(n))_{n\geq 0}$ satisfies the condition:
\begin{equation}
\label{eq:sequence_order}
 \text{there exist}\ a,A>0\ \text{such that}\quad a^n(n!)^s\leq m(n) \leq A^n (n!)^s\quad \text{for every}\ n\in\NN_0,
\end{equation}
then $m$ is called a \emph{sequence of order $s$}.
\end{Def}

\begin{Ex}
Let $k>0$. 
The sequence $\Gamma_{1/k}=(\Gamma(1+n/k))_{n\ge0}$ is a sequence of order $1/k$.
\end{Ex}

\begin{Def}[see {\cite[Section 5.2]{B2}}]
 For a fixed sequence $m=(m(n))_{n\geq 0}$ of positive numbers with $m(0)=1$, a linear operator $\Bo_{m,t}\colon \EE[[t]]\to\EE[[t]]$ defined by
 \begin{equation*}
  (\Bo_{m,t}\hat{u})(t):=
  \sum_{n=0}^{\infty}\frac{a_n}{m(n)}t^n\quad\text{for}\quad\hat{u}(t)=\sum_{n=0}^{\infty}a_nt^n\in\EE[[t]]
 \end{equation*}
 is called an \emph{$m$-Borel operator with respect to $t$}.
\end{Def}
\begin{Rem}
 Observe that for a given sequence $m=(m(n))_{n\geq 0}$ of positive numbers with $m(0)=1$, an inverse $m$-Borel operator $\Bo_{m,t}^{-1}\colon\EE[[t]]\to\EE[[t]]$, called sometimes an \emph{$m$-Laplace operator} is given by
 $\Bo_{m,t}^{-1}= \Bo_{m^{-1},t}$ on $\EE[[t]]$, where $m^{-1}=(m(n)^{-1})_{n\geq 0}$. 
 Hence an $m$-Borel operator $\Bo_{m,t}$ is a linear automorphism on the space of formal power series $\EE[[t]]$.
\end{Rem}

\begin{Def}
    \label{df:gevrey}
    Let $s\in\RR$. A series
    $\hat{u}(t)=\sum_{n=0}^{\infty}a_nt^n\in\EE[[t]]$ is called a \emph{formal power series of Gevrey order $s$} if
    there exist constants $B,C<\infty$ such that
    \begin{equation}
    \label{eq:est_un}
     \|a_n\|_{\EE}\leq BC^n(n!)^s\quad\textrm{for every}\quad n\in\NN_0.
    \end{equation}
    The space of formal power series of Gevrey 
    order $s$ is denoted by $\EE[[t]]_s$.
\end{Def}

\begin{Rem}
    \label{re:gevrey}
    Let $s\in\RR$ and $m$ be a sequence of order $s$. By the above definitions $\hat{u}\in\EE[[t]]_s$ if and only if
    there exists a disc $D\subseteq\CC$ with centre at the origin such that
    ${\Bo}_{m,t}\hat{u}\in\Oo(D,\EE)$.
\end{Rem}

\begin{Def}
\label{df:summable}
Let $k>0$ and $d\in\RR$. Then $\hat{u}\in\EE[[t]]$ is called
\emph{$k$-summable in a direction $d$} if there exists a disc-sector $\hat{S}_d$ in a direction $d$ such that
$\Bo_{\Gamma_{1/k},t}\hat{u}\in\Oo^k(\hat{S}_d,\EE)$.

The space of $k$-summable formal power series in a direction $d$ is denoted by $\EE\{t\}_{k,d}$.
\end{Def}

We will consider also the larger class of summable series than $k$-summable ones, called multisummable ones, for which we may also get a unique sum on appropriate sector as in the $k$-summable case (see \cite[Section 10]{B2} and \cite[Section 7]{LR}).     
\begin{Def}
\label{df:multisummable}
 Let $k_1>\cdots>k_N>0$. We say that a real vector $(d_1,\dots,d_N)\in\RR^N$ is an
 \emph{admissible multidirection} if
 \begin{gather*}
  |d_j-d_{j-1}| \leq \pi(1/k_j - 1/k_{j-1})/2 \quad \textrm{for} \quad j=2,\dots,N.
 \end{gather*}
 \par
 Let $\mathbf{k}=(k_1,\dots,k_N)\in\RR^N_+$ and  let $\mathbf{d}=(d_1,\dots,d_N)\in\RR^N$ be an
 admissible multidirection.
 We say that a formal power series
 $\hat{u}\in\EE[[t]]$ is {\em $\mathbf{k}$-multisummable in the
 multidirection $\mathbf{d}$}
 if $\hat{u}=\hat{u}_1+\cdots+\hat{u}_N$, where $\hat{u}_j\in\EE[[t]]$ is
 $k_j$-summable
 in the direction $d_j$ for $j=1,\dots,N$.
 
 The space of $\mathbf{k}$-multisummable formal power series in a multidirection $\mathbf{d}$ is denoted by $\EE\{t\}_{\mathbf{k},\mathbf{d}}$.
\end{Def}

\section{Moment functions} \label{sec:5} 
In this section we introduce the set of sequences of moments
\begin{equation}
\label{eq:M_got}
\mathfrak{M}:=\{(\mathfrak{m}(n))_{n\geq 0}\colon \mathfrak{m}(u)\ \textrm{is a moment function}\}
\end{equation}
and we show that it is a subset
of the set
\begin{equation}
 \label{eq:M_cal}
 \mathcal{M}:=\{(m(n))_{n\geq 0}\colon m(0)=1,\ m(n)> 0\ \textrm{for}\ n\in\NN\}
\end{equation}
of all positive sequences $m=(m(n))_{n\geq 0}$ with $m(0)=1$. 

To introduce moment functions we recall the notion of moment methods found by Balser~\cite{B2} (see also \cite{B0}).
   
   \begin{Def}[see {\cite[Section 5.5]{B2}}]
    \label{df:moment}
    A pair of functions $e_\mathfrak{m}$ and $E_\mathfrak{m}$ is said to be \emph{kernel functions of order $k$} ($k>1/2$) if
    they have the following properties:
   \begin{enumerate}
    \item[1.] $e_\mathfrak{m}\in\Oo(S_0(\pi/k))$, $e_\mathfrak{m}(z)/z$ is integrable at the origin, $e_\mathfrak{m}(x)\in\RR_+$ for $x\in\RR_+$ and
     $e_\mathfrak{m}$ is exponentially flat of order $k$ in $S_0(\pi/k)$ (i.e. for every $\varepsilon > 0$ there exists $A,B > 0$
     such that $|e_\mathfrak{m}(z)|\leq A e^{-(|z|/B)^k}$ for $z\in S_0(\pi/k-\varepsilon)$).
    \item[2.] $E_\mathfrak{m}\in\Oo^{k}(\CC)$ and $E_\mathfrak{m}(1/z)/z$ is integrable at the origin in $S_{\pi}(2\pi-\pi/k)$.
    \item[3.] The connection between $e_\mathfrak{m}$ and $E_\mathfrak{m}$ is given by the corresponding \emph{moment function
    $\mathfrak{m}$ of order $1/k$} as follows.
     The function $\mathfrak{m}$ is defined in terms of $e_\mathfrak{m}$ by
     \begin{gather}
      \label{eq:e_m}
      \mathfrak{m}(u):=\int_0^{\infty}x^{u-1} e_\mathfrak{m}(x)dx \quad \textrm{for} \quad \RE u \geq 0
     \end{gather}
     and the kernel function $E_\mathfrak{m}$ has the power series expansion
     \begin{gather}
      \label{eq:E_m}
      E_\mathfrak{m}(z)=\sum_{n=0}^{\infty}\frac{z^n}{\mathfrak{m}(n)} \quad  \textrm{for} \quad z\in\CC.
     \end{gather}
     In this case, the integral representation for the reciprocal moment function is given by
     \begin{gather}
      \label{eq:1/m}
     \frac{1}{\mathfrak{m}(u)}=\frac{1}{2\pi i}\int_{\gamma}E_\mathfrak{m}(w)w^{-u-1}dw
     \end{gather}
     with $\gamma$ as in Hankel's formula of the reciprocal Gamma function \cite[p.~228]{B2}. 
     \item[4.] Additionally we assume that $\mathfrak{m}(u)$ satisfies the normalization condition $\mathfrak{m}(0)=1$.
   \end{enumerate}
   \end{Def}
   
    Observe that in case $k\leq 1/2$ the set $S_{\pi}(2\pi-\pi/k)$ is not defined,
    so the second property in Definition \ref{df:moment} can not be satisfied. It means that we
    must define the kernel functions of order $k\leq 1/2$ and the corresponding moment functions
    in another way.
    
    \begin{Def}[see {\cite[Section 5.6]{B2}}]
     \label{df:small}
     A function $e_\mathfrak{m}$ is called \emph{a kernel function of order $k>0$} if we
     can find a pair of kernel functions $e_{\widetilde{\mathfrak{m}}}$ and $E_{\widetilde{\mathfrak{m}}}$ of
     order $pk>1/2$ (for some $p\in\NN$) so that
     \begin{gather*}
      e_\mathfrak{m}(z)=e_{\widetilde{\mathfrak{m}}}(z^{1/p})/p \quad \textrm{for} \quad z\in S_0(\pi/k).
     \end{gather*}
     For a given kernel function $e_\mathfrak{m}$ of order $k>0$ we define the corresponding 
     \emph{moment function $\mathfrak{m}$ of order $1/k>0$} by (\ref{eq:e_m}) and
     the \emph{kernel function $E_{\mathfrak{m}}$ of order $k>0$} by (\ref{eq:E_m}).
    \end{Def}
    
    \begin{Rem}
     \label{re:m_tilde}
     Observe that by Definitions \ref{df:moment} and \ref{df:small} we have
     \begin{eqnarray*}
      \mathfrak{m}(u)=\widetilde{\mathfrak{m}}(pu) & \textrm{and} &
      E_\mathfrak{m}(z)=\sum_{j=0}^{\infty}\frac{z^j}{\mathfrak{m}(j)}=\sum_{j=0}^{\infty}\frac{z^j}{\widetilde{\mathfrak{m}}(jp)}.
     \end{eqnarray*}
    \end{Rem}

    \begin{Rem}
    \label{re:general}
     By the general method of summability (see \cite[Section 6.5 and Theorem 38]{B2}), in the definition of $k$-summability (Definition \ref{df:summable}) one can replace the sequence $\Gamma_{1/k}=(\Gamma(1+n/k))_{n\geq 0}$ by any sequence $\mathfrak{m}=(\mathfrak{m}(n))_{n\geq 0}$,
     where $\mathfrak{m}(u)$ is a moment function of order $1/k$.
    \end{Rem}
    
    \begin{Rem}
    \label{re:moment_functions}
    By \cite[Theorems 31 and 32]{B2}, if $\mathfrak{m}_1(u)$ and $\mathfrak{m}_2(u)$ are moment functions of positive orders $1/k_1$ and $1/k_2$ respectively, then
    \begin{enumerate}
     \item $\mathfrak{m}(u)=\mathfrak{m}_1(u)\mathfrak{m}_2(u)$ is a moment function of order $1/k_1+1/k_2$,
     \item $\mathfrak{m}(u)=\mathfrak{m}_1(u)/\mathfrak{m}_2(u)$ is a moment function of order $1/k_1 - 1/k_2$ under condition that $1/k_1 > 1/k_2$.
    \end{enumerate}
    \end{Rem}
    
    Using the above remark we may extend the definition of moment functions to real order.
    \begin{Def}[{see \cite[Definition 4]{Mic8}}]
     \label{df:moment_general}
     We say that $\mathfrak{m}(u)$ is a \emph{moment function of order $1/k<0$} if $1/\mathfrak{m}(u)$ is a moment function of order $-1/k>0$.
     \par
     Moreover, $\mathfrak{m}(u)$ is called a \emph{moment function of order $0$} if there exist moment functions $\mathfrak{m}_1(u)$ and $\mathfrak{m}_2(u)$ of the same order $1/k>0$ such that $\mathfrak{m}(u)=\mathfrak{m}_1(u)/\mathfrak{m}_2(u)$.
    \end{Def}
    
    \begin{Rem}
     Observe that by Definitions \ref{df:moment}, \ref{df:small} and \ref{df:moment_general} any moment function $\mathfrak{m}(u)$ of order $s\in\RR$ satisfies conditions
     \begin{itemize}
      \item $\mathfrak{m}(u)>0$ for every $u\geq 0$,
      \item $\mathfrak{m}(0)=1$.
     \end{itemize}
     Hence the set $\mathfrak{M}$ given by (\ref{eq:M_got}) is a subset of the set $\mathcal{M}$ defined by (\ref{eq:M_cal}). We call $(\mathfrak{m}(n))_{n\geq  0}$ a \emph{sequence of moments inherited from $\mathfrak{m}(u)$}, if $\mathfrak{m}(u)$ is a moment function.
    \end{Rem}

    \begin{Rem}
     \label{re:group}
     By Remark \ref{re:moment_functions} and Definition \ref{df:moment_general}, the set of moment functions forms a group with a group operation given by the multiplication.
    \end{Rem}

    \begin{Rem}
     \label{rem:7} 
    By \cite[Section 5.5]{B2} and Definition \ref{df:moment_general}, if $\mathfrak{m}(u)$ is a moment function of order $s\in\RR$ then we see from \eqref{eq:e_m} and \eqref{eq:1/m} that $(\mathfrak{m}(n))_{n\geq 0}$ is a sequence of the same order $s$. 
     \end{Rem}

\section{Sequences preserving Gevrey order and summability} \label{sec:6} 
In this section we introduce and discuss new concepts of sequences, which preserve Gevrey order or summability. In particular we find the characterisation of such types of sequences.    
 \begin{Def}
 \label{df:preserve_g}
  We say that a sequence $m=(m(n))_{n\geq 0}$ \emph{preserves Gevrey order} if for any $s\in\RR$ and any $\hat{u}\in\EE[[t]]$ the following equivalence holds:
  \begin{equation}
  \label{eq:preserve_g}
  \hat{u}\in\EE[[t]]_s\quad\text{if and only if}\quad
  \Bo_{m,t}\hat{u}\in\EE[[t]]_{s}.
  \end{equation}
\end{Def}

In the next proposition we give a simple characterisation of sequences preserving Gevrey order. 
\begin{Prop}
\label{prop:preserve}
 A sequence $m=(m(n))_{n\geq 0}$ preserves Gevrey order if and only if $m$ is a sequence of order zero.
\end{Prop}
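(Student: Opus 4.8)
The plan is to prove both implications directly from Definition~\ref{df:growth} of Gevrey order and the growth estimates~(\ref{eq:sequence_order}) defining a sequence of order zero, exploiting the fact that multiplying or dividing the coefficients $a_n$ of a formal power series by $m(n)$ with $a^n \leq m(n) \leq A^n$ changes the estimate~(\ref{eq:est_un}) only by an exponential factor, which can be absorbed into the constant $C^n$.

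First I would prove the ``if'' part. Assume $m$ is a sequence of order zero, so there exist $a,A>0$ with $a^n \leq m(n) \leq A^n$ for all $n$. Take $\hat{u}(t)=\sum_{n=0}^{\infty}a_n t^n \in \EE[[t]]_s$, so $\|a_n\|_{\EE}\leq BC^n(n!)^s$. Then the coefficients of $\Bo_{m,t}\hat{u}$ are $a_n/m(n)$, and $\|a_n/m(n)\|_{\EE} \leq BC^n(n!)^s / a^n = B(C/a)^n (n!)^s$, which shows $\Bo_{m,t}\hat{u}\in\EE[[t]]_s$. Conversely, if $\Bo_{m,t}\hat{u}\in\EE[[t]]_s$, i.e. $\|a_n/m(n)\|_{\EE}\leq BC^n(n!)^s$, then $\|a_n\|_{\EE}\leq BC^n(n!)^s m(n) \leq B(CA)^n(n!)^s$, so $\hat{u}\in\EE[[t]]_s$. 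This establishes the equivalence~(\ref{eq:preserve_g}) for every $s\in\RR$, hence $m$ preserves Gevrey order.

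For the ``only if'' part I would argue by contraposition: suppose $m$ is not a sequence of order zero, and produce a specific $\hat{u}$ and $s$ violating~(\ref{eq:preserve_g}). If $m$ fails the upper bound in~(\ref{eq:sequence_order}) with $s=0$, then for every $A>0$ there are infinitely many $n$ with $m(n)>A^n$; equivalently $\limsup_n m(n)^{1/n}=\infty$. Taking $\hat{u}(t)=\sum_n m(n) t^n$ (or, to be safe with convergence, $\hat{u}(t)=\sum_n t^n \in \EE[[t]]_0$) one sees that $\Bo_{m,t}$ applied to $\sum_n m(n) t^n$ gives $\sum_n t^n\in\EE[[t]]_0$, while $\sum_n m(n)t^n\notin\EE[[t]]_0$ because $m(n)$ is not bounded by any $C^n$; this breaks the equivalence with $s=0$. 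Symmetrically, if $m$ fails the lower bound, then $\liminf_n m(n)^{1/n}=0$, i.e. for every $a>0$ there are infinitely many $n$ with $m(n)<a^n$; then $\hat{u}(t)=\sum_n t^n\in\EE[[t]]_0$ but $\Bo_{m,t}\hat{u}=\sum_n (1/m(n))t^n$ has coefficients $1/m(n)$ which exceed $(1/a)^n$ infinitely often, so $\Bo_{m,t}\hat{u}\notin\EE[[t]]_0$, again contradicting~(\ref{eq:preserve_g}).

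The main point to be careful about — though it is not really an obstacle — is the logical structure of the ``only if'' direction: the definition of preserving Gevrey order quantifies over \emph{all} $s$ and \emph{all} $\hat{u}$, so to negate it I only need \emph{one} bad pair, and the natural choice is $s=0$ together with the constant-coefficient series $\sum_n t^n$, using that $m$ fails one of the two bounds in~(\ref{eq:sequence_order}). One should also note that the bound~(\ref{eq:sequence_order}) for a general $s$ is equivalent to the bound for $s=0$ after dividing by $(n!)^s$, so there is no loss in testing only at $s=0$. A remark worth including is that, by Remark~\ref{rem:7}, any sequence of moments of order zero is in particular a sequence of order zero, so Proposition~\ref{prop:preserve} shows every such sequence preserves Gevrey order — the inclusion being strict, as the characterisation via~(\ref{eq:sequence_order}) makes clear.
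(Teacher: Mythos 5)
Your proof is correct and follows essentially the same route as the paper: the sufficiency is the same coefficient estimate absorbing $a^n\leq m(n)\leq A^n$ into the geometric factor, and the necessity uses exactly the paper's two test series $\sum_n m(n)t^n$ and $\sum_n t^n$ at $s=0$, merely phrased contrapositively instead of directly. No gaps worth noting.
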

\begin{proof}
($\Rightarrow$) Applying Definition \ref{df:preserve_g} for $s=0$ and $\hat{u}(t)=\sum_{n=0}^{\infty}m(n)t^n$, and observing that $\Bo_{m,t}\hat{u}(t)=\sum_{n=0}^{\infty}t^n\in\CC[[t]]_0$, we conclude that also 
$\hat{u}(t)\in\CC[[t]]_0$. It means by Definition \ref{df:gevrey} that there exist $B,C<\infty$ such that
\begin{equation}
 \label{eq:est_mn}
 |m(n)|\leq BC^n\quad\text{for}\quad n\in\NN_0.
\end{equation}
Since $m(0)=1$ we may take $B=1$ in (\ref{eq:est_mn}) for sufficiently large $C<\infty$. In this way we get the right-hand side estimation in (\ref{eq:sequence_order}) with $A=C$.

Similarly, applying Definition \ref{df:preserve_g} for $s=0$ and $\hat{u}(t)=\sum_{n=0}^{\infty}t^n$ and observing that $\hat{u}(t)\in\CC[[t]]_0$, we conclude that also 
$\Bo_{m,t}\hat{u}(t)=\sum_{n=0}^{\infty}\frac{1}{m(n)}t^n\in\CC[[t]]_0$. It means by Definition \ref{df:gevrey} that there exist $B,C<\infty$ such that
\begin{equation*}
 \Big|\frac{1}{m(n)}\Big|\leq BC^n\quad\text{for}\quad n\in\NN_0.
\end{equation*} 
Since $m(0)=1$ and $m(n)>0$ we may take $B=1$ for sufficiently large $C<\infty$, so we conclude that
\begin{equation*}
 m(n)\geq C^{-n}\quad\text{for}\quad n\in\NN_0,
\end{equation*}
which gives the left-hand side estimation in (\ref{eq:sequence_order}) with $a=C^{-1}$.

($\Leftarrow$) Take any $s\in\RR$ and any $\hat{u}(t)=\sum_{n=0}^{\infty}a_nt^n$. If $\hat{u}\in\EE[[t]]_s$ and $m$ is a sequence of order zero then by (\ref{eq:sequence_order}) and (\ref{eq:est_un}) there exist constants $a>0$ and $B,C<\infty$ such that
$$
\Big\|\frac{a_n}{m(n)}\Big\|_{\EE}=m(n)^{-1}\|a_n\|_{\EE}\leq B(C/a)^n(n!)^s\quad\text{for}\quad n\in\NN_0.
$$
Hence also $\Bo_{m,t}\hat{u}(t)=\sum_{n=0}^{\infty}\frac{a_n}{m(n)}t^n\in\EE[[t]]_s$.

To prove the second implication in (\ref{eq:preserve_g}) we assume that $\Bo_{m,t}\hat{u}(t)=\sum_{n=0}^{\infty}\frac{a_n}{m(n)}t^n\in\EE[[t]]_s$. By (\ref{eq:sequence_order}) and (\ref{eq:est_un}) there exist constants $A,B,C<\infty$ such that
$$
\|a_n\|_{\EE}=m(n)\Big\|\frac{a_n}{m(n)}\Big\|_{\EE}\leq B(AC)^n(n!)^s\quad\text{for}\quad n\in\NN_0.
$$
It means that also $\hat{u}(t)=\sum_{n=0}^{\infty}a_nt^n\in\EE[[t]]_s$.
\end{proof}

In an analogous way we define sequences preserving summability.
 \begin{Def}
  We say that a sequence $m=(m(n))_{n\geq 0}$ \emph{preserves summability} if for any $k>0$, $d\in\RR$ and any $\hat{u}\in\EE[[t]]$ the following equivalence holds:
  \begin{equation*}
   \hat{u}\in\EE\{t\}_{k,d}\quad\text{if and only if}\quad
   \Bo_{m,t}\hat{u}\in\EE\{t\}_{k,d}.
  \end{equation*}
\end{Def}

\begin{Rem}
\label{re:inclusion}
Since for every $k>0$ and $d\in\RR$ we have $\EE\{t\}_{k,d}\subset \EE[[t]]_{1/k}$, we see that  
 if a sequence $m=(m(n))_{n\geq 0}$ preserves summability then this sequence $m$ preserves also Gevrey order (i.e. $m$ is a sequence of order $0$).
\end{Rem}

\begin{Rem}
 Directly by the definition of multisummability (Definition \ref{df:multisummable}) we conclude that if a sequence $m=(m(n))_{n\geq 0}$ preserves summability then it also preserves multisummability. It means that for every $\mathbf{k}=(k_1,\dots,k_N)$ with $k_1>\cdots>k_N>0$, for every admissible multidirection $\mathbf{d}=(d_1,\dots,d_N)\in\RR^N$ and for every $\hat{u}\in\EE[[t]]$ the following equivalence holds:
  \begin{equation*}
   \hat{u}\in\EE\{t\}_{\mathbf{k},\mathbf{d}}\quad\text{if and only if}\quad
   \Bo_{m,t}\hat{u}\in\EE\{t\}_{\mathbf{k},\mathbf{d}}.
  \end{equation*}
\end{Rem}

\begin{Ex}
\label{ex:ex_2}
 Examples of sequences preserving summability:
 \begin{enumerate}
  \item If $a>0$ and $\mathbf{a}:=(a^n)_{n\geq 0}$ then the sequence $\mathbf{a}$ preserves summability. In particular the sequence $\mathbf{1}=(1)_{n\geq 0}$ preserves summability in a trivial way.
  \item By Balser's theory of general summability \cite[Section 6.5 and Theorem 38]{B2} for any moment function $\mathfrak{m}(u)$ of order zero, the sequence $(\mathfrak{m}(n))_{n\geq 0}$ preserves summability (see also Remark \ref{re:general}).
 \end{enumerate}
\end{Ex}

\begin{Ex}
\label{ex:ex_3}
 Not every sequence of order $0$ preserves summability. Let
 \begin{equation}
 \label{eq:ex_m}
  m(n):=
    \left\{
    \begin{array}{ll}
     1& n\ \text{is even}\\
     2^{-1}& n\ \text{is odd}
    \end{array}
    \right..
 \end{equation}
The series $\hat{x}(t)=\sum_{n=0}^{\infty}n!t^n$ is $1$-summable in any direction $d\neq 0 \mod 2\pi$, because for $m_1(n)=n!$ and for any $d\neq 0 \mod 2\pi$
$$
\Bo_{m_1,t}\hat{x}(t)=\sum_{n=0}^{\infty}t^n=\frac{1}{1-t}\in\Oo^1(\hat{S}_d).
$$

On the other hand the series 
$$
\hat{y}(t)=\Bo_{m,t}\hat{x}(t)=\sum_{n=0}^{\infty}\frac{n!}{m(n)}t^n=\sum_{k=0}^{\infty}(2k)!t^{2k}+\sum_{k=0}^{\infty}2(2k+1)!t^{2k+1}
$$
is $1$-summable only for directions $d\neq 0 \mod \pi$, because the function
$$
\Bo_{m_1,t}\hat{y}(t)=\sum_{k=0}^{\infty}t^{2k}+\sum_{k=0}^{\infty}2 t^{2k+1}=\frac{1}{1-t^{2}}+\frac{2t}{1-t^{2}}=\frac{1+2t}{1-t^2}\in\Oo^1(\hat{S}_d),\ d\neq 0 \mod \pi
$$
has a simple pole not only at $t=1$, but also at $t=-1$.

Hence $\hat{x}(t)\in\CC\{t\}_{1,\pi}$, but $\hat{y}(t)=\Bo_{m,t}\hat{x}(t)\not\in\CC\{t\}_{1,\pi}$.
\end{Ex}
\begin{Rem}
Observe that by Example \ref{ex:ex_3} the sequence of order zero $(m(n))_{n\geq 0}$ defined by (\ref{eq:ex_m}) satisifies $m(n)>0$ for $n\geq 0$ and $m(0)=1$, but it does not preserve summability. On the other hand, by Example \ref{ex:ex_2} every sequence of moments inherited from some moment function $\mathfrak{m}(u)$ of order zero preserves summability. Hence
the sequence $(m(n))_{n\geq 0}$ defined by (\ref{eq:ex_m}) is not a sequence of moments inherited from any moment function $\mathfrak{m}(u)$ of order zero. 
\end{Rem}

\begin{Rem}
\label{re:groups}
 The set of sequences preserving Gevrey order (resp. summability) forms a group with a group operation given by the multiplication. If $m_1=(m_1(n))_{n\geq0}$ and $m_2=(m_2(n))_{n\geq 0}$ preserve Gevrey order (resp. summability) then also their product $m=m_1\cdot m_2$ (i.e. $m=(m(n))_{n\geq 0}$, where $m(n)=m_1(n)\cdot m_2(n)$ for any $n\in\NN_0$) preserves Gevrey order (resp. summability). Observe also, that the identity element $\mathbf{1}=(1)_{n\geq 0}$ and the inverse element $m^{-1}=(m(n)^{-1})_{n\geq 0}$ to $m=(m(n))_{n\geq 0}$ preserve Gevrey order (resp. summability).
 
 Note that by Remark \ref{re:inclusion} and Example \ref{ex:ex_3} the group of sequences preserving summability is a proper subgroup of the group of sequences preserving Gevrey order.
 
 Moreover, by Remark \ref{re:group}  the set
 \begin{equation}
 \label{eq:subgroup}
 \mathfrak{M}_0=\Big\{(\mathfrak{m}(n))_{n\geq 0}\colon \mathfrak{m}(u)\ \text{is a moment function of order zero}\Big\}
 \end{equation}
 with the multiplication forms a subgroup of these two groups mentioned above.
\end{Rem}

\section{Moment differentiations and moment operators of order $0$} \label{sec:7} 
In this section we extend the notion of $m$-moment differentiation introduced by Balser and Yoshino \cite{B-Y} to any sequence $m=(m(n))_{n\geq 0}$ of positive numbers with $m(0)=1$. We introduce 
an $m$-moment differentiation of order $r$ ($r\in\RR$) and we focus our special attention on the case $r=0$. 
\begin{Def}
 For a given sequence $m=(m(n))_{n\geq 0}$ of positive numbers with $m(0)=1$, an operator $\partial_{m,t}\colon\EE[[t]] \to \EE[[t]]$ defined by
 \begin{equation*}
  \partial_{m,t}\big(\sum_{n=0}^{\infty}\frac{u_n}{m(n)}t^n\big):=\sum_{n=0}^{\infty}\frac{u_{n+1}}{m(n)}t^n
 \end{equation*}
is called an \emph{$m$-moment differentiation}.

If additionally $m$ is a sequence of order $r$ for fixed $r\in\RR$ then $\partial_{m,t}$ is called an \emph{$m$-moment differentiation of order $r$} or an \emph{operator of order $r$} for short.
\end{Def}

\begin{Rem}
 \label{re:diff}
 Notice that for any formal power series $\hat{u}(t)=\sum_{n=0}^{\infty}a_n t^n\in\EE[[t]]$ the operator $\partial_{m,t}\colon\EE[[t]] \to \EE[[t]]$ can be equivalently defined as
 \begin{equation*}
  \partial_{m,t}\hat{u}(t)=\partial_{m,t}\big(\sum_{n=0}^{\infty}a_nt^n\big)=\sum_{n=0}^{\infty}\frac{m(n+1)}{m(n)}a_{n+1}t^n.
 \end{equation*}

\end{Rem}

\begin{Rem}
 Observe that in the most important case $m=(n!)_{n\geq 0}$, the operator $\partial_{m,t}$ is the $m$-moment differentiation of order $1$, which coincides with the usual differentiation $\partial_t$.
\end{Rem}

By the direct calculation we get
\begin{Prop}
\label{prop:commutate}
 Let $m_1=(m_1(n))_{n\geq 0}$ and $m_2=(m_2(n))_{n\geq 0}$ be sequences of positive numbers. Then the operators $\Bo_{m_1,t},\partial_{m_2,t}\colon\EE[[t]]\to\EE[[t]]$ commute in a such way that
 \begin{equation*}
  \Bo_{m_1,t}\partial_{m_2,t}=\partial_{m_1m_2,t}\Bo_{m_1,t}.
 \end{equation*}
\end{Prop}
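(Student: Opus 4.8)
The plan is to verify the identity by a direct computation on an arbitrary formal power series, comparing the coefficients of $t^n$ produced by each side. Fix $\hat{u}(t)=\sum_{n=0}^{\infty}a_nt^n\in\EE[[t]]$. First I would observe that $m_1m_2=(m_1(n)m_2(n))_{n\geq 0}$ is again a sequence of positive numbers with $(m_1m_2)(0)=m_1(0)m_2(0)=1$, so that $\partial_{m_1m_2,t}$ is a well-defined operator on $\EE[[t]]$; this is the only admissibility point that needs checking.

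Next I would use the alternative description of an $m$-moment differentiation given in Remark \ref{re:diff}. For the left-hand side one first computes
\begin{equation*}
\partial_{m_2,t}\hat{u}(t)=\sum_{n=0}^{\infty}\frac{m_2(n+1)}{m_2(n)}a_{n+1}t^n,
\end{equation*}
and then applies the $m_1$-Borel operator, obtaining
\begin{equation*}
\Bo_{m_1,t}\partial_{m_2,t}\hat{u}(t)=\sum_{n=0}^{\infty}\frac{m_2(n+1)}{m_1(n)m_2(n)}a_{n+1}t^n.
\end{equation*}
For the right-hand side one first applies the $m_1$-Borel operator, $\Bo_{m_1,t}\hat{u}(t)=\sum_{n=0}^{\infty}\frac{a_n}{m_1(n)}t^n$, and then invokes Remark \ref{re:diff} for the sequence $m_1m_2$:
\begin{equation*}
\partial_{m_1m_2,t}\Bo_{m_1,t}\hat{u}(t)=\sum_{n=0}^{\infty}\frac{m_1(n+1)m_2(n+1)}{m_1(n)m_2(n)}\cdot\frac{a_{n+1}}{m_1(n+1)}t^n=\sum_{n=0}^{\infty}\frac{m_2(n+1)}{m_1(n)m_2(n)}a_{n+1}t^n.
\end{equation*}
The key point making the two expressions agree is the cancellation $\frac{m_1(n+1)}{m_1(n)}\cdot\frac{1}{m_1(n+1)}=\frac{1}{m_1(n)}$; comparing coefficients of $t^n$ then yields the stated identity.

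There is essentially no obstacle here: the claim is a bookkeeping identity, and since every operator involved is linear it would suffice to check it on the monomials $t^n$ if one prefers to avoid manipulating infinite sums. The only mild subtlety, already noted, is confirming that $m_1m_2$ satisfies the normalisation $(m_1m_2)(0)=1$ required by the definition of $\partial_{m_1m_2,t}$.
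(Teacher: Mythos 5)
Your computation is correct and is exactly the "direct calculation" the paper invokes without writing out: both sides act on an arbitrary $\hat{u}(t)=\sum_{n\geq 0}a_nt^n$ with coefficient $\frac{m_2(n+1)}{m_1(n)m_2(n)}a_{n+1}$ of $t^n$, using the reformulation of $\partial_{m,t}$ from Remark \ref{re:diff}. The normalisation check $(m_1m_2)(0)=1$ is a sensible (if routine) addition, and nothing further is needed.
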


\begin{Prop}[The moment Taylor formula]
\label{prop:taylor}
 Let $\hat{\varphi}\in\EE[[t]]$ and $m=(m(n))_{n\geq 0}$ be a sequence of positive numbers with $m(0)=1$. Then
 \begin{equation}
 \label{eq:taylor}
  \hat{\varphi}(t)=\sum_{n=0}^{\infty}\frac{\partial^n_{m,t}\hat{\varphi}(0)}{m(n)}t^n.
 \end{equation}
\end{Prop}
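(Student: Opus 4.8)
The plan is to reduce the claim to a straightforward induction on the order of differentiation. First I would write $\hat{\varphi}(t)=\sum_{n=0}^{\infty}a_nt^n\in\EE[[t]]$ in the normalised form $\hat{\varphi}(t)=\sum_{n=0}^{\infty}\frac{u_n}{m(n)}t^n$ with $u_n:=m(n)\,a_n$, which is legitimate since $m(n)>0$ for all $n$. In this form the definition of the $m$-moment differentiation reads $\partial_{m,t}\hat{\varphi}(t)=\sum_{n=0}^{\infty}\frac{u_{n+1}}{m(n)}t^n$.

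Next I would prove by induction on $k\in\NN_0$ that
\begin{equation*}
\partial^k_{m,t}\hat{\varphi}(t)=\sum_{n=0}^{\infty}\frac{u_{n+k}}{m(n)}t^n.
\end{equation*}
The base case $k=0$ is just the normalised form of $\hat{\varphi}$ itself. For the inductive step, the series $\sum_{n=0}^{\infty}\frac{u_{n+k}}{m(n)}t^n$ has $n$-th normalised coefficient equal to $u_{n+k}$, so applying the definition of $\partial_{m,t}$ shifts the index once more and yields $\sum_{n=0}^{\infty}\frac{u_{n+1+k}}{m(n)}t^n$. Equivalently, one may carry out the step using the form of $\partial_{m,t}$ from Remark~\ref{re:diff} and observe the telescoping of the quotients $\frac{m(n+1)}{m(n)}\cdot\frac{m(n+2)}{m(n+1)}\cdots$.

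Finally, I would evaluate at $t=0$: only the constant term survives, and since $m(0)=1$ this gives $\partial^k_{m,t}\hat{\varphi}(0)=u_k=m(k)\,a_k$, hence $\frac{\partial^k_{m,t}\hat{\varphi}(0)}{m(k)}=a_k$. Substituting this into the right-hand side of \eqref{eq:taylor} recovers $\sum_{k=0}^{\infty}a_kt^k=\hat{\varphi}(t)$, which is the asserted identity. There is no genuine obstacle here; the only point requiring a little care is the consistent re-indexing in the inductive step, i.e.\ ensuring that $\partial^k_{m,t}\hat{\varphi}$ is again expressed in the normalisation with respect to which $\partial_{m,t}$ was defined, so that a further application of $\partial_{m,t}$ can be read off directly from the definition.
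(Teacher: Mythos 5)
Your proposal is correct and follows essentially the same route as the paper: both compute the iterated moment derivative $\partial^k_{m,t}\hat{\varphi}(t)=\sum_{n\ge0}\frac{m(n+k)}{m(n)}a_{n+k}t^n$ (you just make the induction on $k$ explicit), evaluate at $t=0$ to get $\partial^k_{m,t}\hat{\varphi}(0)=m(k)a_k$, and substitute back. No gaps; this matches the paper's argument.
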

\begin{proof}
 Let $\hat{\varphi}(t)=\sum_{k=0}^{\infty}a_kt^k\in\EE[[t]]$. Using the definition of $m$-moment differentiation (see also Remark \ref{re:diff}) we conclude that for any $n\in\NN_0$
 \begin{equation*}
  \partial^n_{m,t}\hat{\varphi}(t)=\partial^n_{m,t}\big(\sum_{k=0}^{\infty}a_kt^k\big)=\sum_{k=0}^{\infty}\frac{m(k+n)}{m(k)}a_{k+n}t^k.
 \end{equation*}
It means that $\partial^n_{m,t}\hat{\varphi}(0)=\frac{m(n)}{m(0)}a_n=m(n)a_n$. Hence we get the conclusion (\ref{eq:taylor}).
\end{proof}

\begin{Rem}
\label{re:u}
In the paper we will also use Proposition \ref{prop:taylor} for $\hat{u}(t,z)\in\Oo(D,\EE)[[t]]$. In this case $\hat{u}(\cdot,z)\in\EE[[t]]$ for any fixed $z\in D$ and (\ref{eq:taylor}) has the form
\begin{equation*}
  \hat{u}(t,z)=\sum_{n=0}^{\infty}\frac{\partial^n_{m,t}\hat{u}(0,z)}{m(n)}t^n.
 \end{equation*}
\end{Rem}

\begin{Rem}
\label{re:l}
 Since the left-hand side of of (\ref{eq:taylor}) does not depend on a sequence $m$, by Proposition \ref{prop:taylor} we conclude that for any sequences $m_1=(m_1(n))_{n\geq 1}$ and $m_2=(m_2(n))_{n\geq 1}$
 \begin{equation*}
  \frac{\partial^n_{m_1,t}\hat{\varphi}(0)}{m_1(n)}=\frac{\partial^n_{m_2,t}\hat{\varphi}(0)}{m_2(n)}
  \quad\textrm{for every}\quad n\in\NN_0.
 \end{equation*}
 Especially, when $m_1=m$ and $m_2(n)=n!$, it holds that $\displaystyle \frac{\partial_{m,t}^n\hat{\varphi}(0)}{m(n)}=\frac{\partial_t^n\hat{\varphi}(0)}{n!}$ for every $n\in\NN_0$.
\end{Rem}

\begin{Prop}
\label{prop:aux0}
Let ${m}_i(n)$ be a sequence of positive numbers with $m_i(0)=1$ for $i=1,2$. Then the Cauchy problem
\begin{equation*}
\label{eq:aux0}
    \left\{
    \begin{array}{l}
     (\partial_{{m_1},t}-\partial_{m_2,z})u=0\\
     u(0,z)=\hat{\varphi}(z)\in\EE[[z]]
    \end{array}
    \right.
 \end{equation*}
 has a unique formal power series solution
 $$
 \hat{u}(t,z)=\sum_{n\geq 0}\frac{\partial_{m_2,z}^n\hat{\varphi}(z)}{m_1(n)}t^n.
 $$
\end{Prop}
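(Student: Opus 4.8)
The plan is to work entirely at the level of formal power series in $t$ with coefficients in $\EE[[z]]$, so that no analytic estimates are needed: the statement is essentially a bookkeeping identity, and it will give existence and uniqueness simultaneously. First I would write an arbitrary candidate solution as $\hat{u}(t,z)=\sum_{n\geq 0}u_n(z)t^n$ with each $u_n\in\EE[[z]]$. The initial condition $u(0,z)=\hat{\varphi}(z)$ forces $u_0=\hat{\varphi}$. Since $\partial_{m_2,z}$ maps $\EE[[z]]$ into itself, each iterate $\partial_{m_2,z}^n\hat{\varphi}$ is a well-defined element of $\EE[[z]]$, so the series in the statement makes sense as an element of $\Oo(D,\EE)[[t]]$ (or $\EE[[z]][[t]]$).

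Next I would apply $\partial_{m_1,t}$ to the candidate series. By Remark \ref{re:diff}, applied in the variable $t$ (with the coefficient space $\EE[[z]]$ in place of $\EE$), we have $\partial_{m_1,t}\hat{u}(t,z)=\sum_{n\geq 0}\frac{m_1(n+1)}{m_1(n)}u_{n+1}(z)t^n$, while directly $\partial_{m_2,z}\hat{u}(t,z)=\sum_{n\geq 0}(\partial_{m_2,z}u_n)(z)t^n$. Substituting into $(\partial_{m_1,t}-\partial_{m_2,z})u=0$ and comparing coefficients of $t^n$ gives the recursion
$$u_{n+1}(z)=\frac{m_1(n)}{m_1(n+1)}\,\partial_{m_2,z}u_n(z)\qquad(n\geq 0).$$
Together with $u_0=\hat{\varphi}$ this determines the sequence $(u_n)_{n\geq 0}$ uniquely, which already yields the uniqueness of the formal solution.

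For existence I would solve the recursion explicitly. By induction on $n$, using $m_1(0)=1$ and the telescoping of the factors $m_1(j)/m_1(j+1)$ for $j=0,\dots,n-1$, one obtains $u_n(z)=\frac{1}{m_1(n)}\,\partial_{m_2,z}^n\hat{\varphi}(z)$, which is exactly the claimed formula; conversely, this series manifestly satisfies $u_0=\hat{\varphi}$ and the recursion above, hence solves the Cauchy problem. There is no genuine obstacle here: the only point that requires a little care is the appearance of the ratio $m_1(n+1)/m_1(n)$ from Remark \ref{re:diff} (so $\partial_{m_1,t}$ must not be confused with a naive index shift), and the check that the telescoping product of these ratios collapses to $1/m_1(n)$ precisely because $m_1(0)=1$.
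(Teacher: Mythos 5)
Your argument is correct and is essentially the paper's proof: both substitute a power series ansatz in $t$, compare coefficients, and solve the resulting recursion, the only difference being that the paper writes the ansatz in the normalized form $\hat{u}(t,z)=\sum_{n\geq0}\frac{u_n(z)}{m_1(n)}t^n$ so that the recursion is immediately $u_{n+1}=\partial_{m_2,z}u_n$, whereas you use raw coefficients and telescope the ratios $m_1(n)/m_1(n+1)$ to the same effect.
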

\begin{proof}
By substituting $\hat{u}(t,z)=\sum_{n\geq0}\frac{u_n(z)}{m_1(n)}t^n$, we have $u_{n+1}(z)=\partial_{m_2,z}u_n(z)$ and $u_0(z)=\hat{\varphi}(z)$. Therefore we have $u_n(z)=\partial_{m_2,z}^n\hat{\varphi}(z)$.
\end{proof}

\begin{Ex}
 Examples of operators of order $0$:
 \begin{enumerate}
  \item If $\mathbf{1}=(1)_{n\geq 0}$ then
  \begin{equation*}
   \partial_{\mathbf{1},t}\hat{u}(t)=\frac{\hat{u}(t)-\hat{u}(0)}{t}\quad\text{for}\quad \hat{u}(t)\in\EE[[t]].
  \end{equation*} 
  More generally, for every $n\in\NN$ we get
  \begin{equation*}
   \partial^n_{\mathbf{1},t}\hat{u}(t)=\frac{\hat{u}(t)-\sum_{k=0}^{n-1}\frac{\partial^k_t\hat{u}(0)}{k!}t^k}{t^n}\quad\text{for}\quad \hat{u}(t)\in\EE[[t]].
  \end{equation*}
  Hence we may write the usual Taylor's theorem as
  \begin{equation*}
   \hat{u}(t)=\sum_{k=0}^{n-1}\frac{\partial^k_t\hat{u}(0)}{k!}t^k+R_n(t),
  \end{equation*}
where the reminder term $R_n(t)$ of the Taylor polynomial is given by $R_n(t)=t^n\partial^n_{\mathbf{1},t}\hat{u}(t)$.
  \item If $a>0$ and $\mathbf{a}=(a^n)_{n\geq 0}$ then 
  \begin{equation*}
   \partial_{\mathbf{a},t}\hat{u}(t)=\frac{a(\hat{u}(t)-\hat{u}(0))}{t}=a\partial_{\mathbf{1},t}\hat{u}(t)\quad\text{for}\quad \hat{u}(t)\in\EE[[t]].
  \end{equation*}
 \item By Remark \ref{rem:7} if $\mathfrak{m}(u)$ is a moment function of order $0$ then $\mathfrak{m}=(\mathfrak{m}(n))_{n\geq 0}$ is a sequence of order $0$. Hence $\partial_{\mathfrak{m},t}$ is an operator of order $0$.
  \item Let $m=([n]_q!)_{n\geq 0}$ for $q\in[0,1)$. Observe that
  $D_{q,t}t^n=[n]_qt^{n-1}$, hence in this case $\partial_{m,t}$ coincides with the $q$-difference operator $D_{q,t}$, i.e.
  \begin{equation*}
   \partial_{m,t}\hat{u}(t)=D_{q,t}\hat{u}(t)=\frac{\hat{u}(qt)-\hat{u}(t)}{qt-t}\quad\text{for}\quad \hat{u}(t)\in\EE[[t]].
  \end{equation*}
  Since $q\in [0,1)$ we get $1\leq [n]_q\leq \frac{1}{1-q}$ for every $n\in\NN_0$ and we conclude that
  \begin{equation}
   \label{eq:est_nq!}
   1\leq [n]_q!\leq \big(\frac{1}{1-q}\big)^n\quad\textrm{for every}\ n\in\NN_0.
  \end{equation}
  Therefore the $q$-difference operator $D_{q,t}$ is the $m$-moment differentiation of order $0$.\\
  Observe also that in the special case $q=0$ we get
  $$
  D_{0,t}\hat{u}(t)=\partial_{\mathbf{1},t}\hat{u}(t)=\frac{\hat{u}(t)-\hat{u}(0)}{t}.
  $$
 \end{enumerate}
 \end{Ex}

\section{The characterisation of sequences preserving summability} \label{sec:7+1} 
Proposition \ref{prop:preserve} gives a full characterisation of sequences preserving Gevrey order. We show a similar characterisation for sequences preserving summability. 

 \begin{Thm}
 \label{th:sequence_summability}
  A sequence $m=(m(n))_{n\geq 0}$ preserves summability if and only if for every $k>0$ and for every $\theta\neq 0\mod 2\pi$ there exists a disc-sector $\hat{S}_{\theta}$ such that
  \begin{equation*}
  \Bo_{m,t}\big(\sum_{n=0}^{\infty}t^n\big)\in\Oo^k(\hat{S}_{\theta})\quad \textrm{and}\quad \Bo_{m^{-1},t}\big(\sum_{n=0}^{\infty}t^n\big)\in\Oo^k(\hat{S}_{\theta}).
  \end{equation*}
 \end{Thm}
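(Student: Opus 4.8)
The plan is to prove both implications by reducing everything to the single scalar test series $\sum_{n\geq 0}t^n$ and its images under $\Bo_{m,t}$ and $\Bo_{m^{-1},t}$. The necessity direction is almost immediate; the real content is sufficiency, where I want to show that knowing the two test series are entire of exponential growth order $k$ on a disc-sector in \emph{every} non-anti-Stokes direction $\theta$ forces $\Bo_{m,t}$ to carry $k$-summable series to $k$-summable series, in the same direction, for \emph{arbitrary} Banach-space-valued $\hat{u}$.

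\textbf{Necessity.} Suppose $m$ preserves summability. Fix $k>0$ and $\theta\neq 0\mod 2\pi$. The series $\hat{x}(t)=\sum_{n\geq 0}\Gamma(1+n/k)\,t^n$ is $k$-summable in the direction $\theta$, since $\Bo_{\Gamma_{1/k},t}\hat{x}(t)=\sum_{n\geq 0}t^n=\tfrac{1}{1-t}\in\Oo^k(\hat S_\theta)$ for any $\theta\neq 0\mod 2\pi$ (the only singularity is at $t=1$). Hence $\Bo_{m,t}\hat{x}\in\CC\{t\}_{k,\theta}$, which by Definition \ref{df:summable} means $\Bo_{\Gamma_{1/k},t}\Bo_{m,t}\hat{x}=\Bo_{m,t}\Bo_{\Gamma_{1/k},t}\hat{x}=\Bo_{m,t}\big(\sum_{n\geq0}t^n\big)\in\Oo^k(\hat S_\theta)$ (the two Borel operators commute since they act diagonally on monomials). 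Applying the same argument to $m^{-1}$, which also preserves summability by Remark \ref{re:groups}, gives $\Bo_{m^{-1},t}\big(\sum_{n\geq0}t^n\big)\in\Oo^k(\hat S_\theta)$. This proves the ``only if'' part.

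\textbf{Sufficiency.} Assume the growth hypothesis on $g_m:=\Bo_{m,t}\big(\sum_{n\geq0}t^n\big)=\sum_{n\geq0}\tfrac{t^n}{m(n)}$ and $g_{m^{-1}}:=\Bo_{m^{-1},t}\big(\sum_{n\geq0}t^n\big)=\sum_{n\geq0}m(n)t^n$. First note that by symmetry ($m\leftrightarrow m^{-1}$) it suffices to prove one implication, say $\hat u\in\EE\{t\}_{k,d}\Rightarrow\Bo_{m,t}\hat u\in\EE\{t\}_{k,d}$. Let $\hat u=\sum_{n\geq0}a_nt^n$ with $a_n\in\EE$, and set $w:=\Bo_{\Gamma_{1/k},t}\hat u=\sum_{n\geq0}\tfrac{a_n}{\Gamma(1+n/k)}t^n$, which by hypothesis extends to a function in $\Oo^k(\hat S_d,\EE)$. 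We must show $\Bo_{\Gamma_{1/k},t}\Bo_{m,t}\hat u=\sum_{n\geq0}\tfrac{a_n}{m(n)\Gamma(1+n/k)}t^n$ again lies in $\Oo^k(\hat S_d,\EE)$. The idea is that the extra factor $1/m(n)$ acts as a ``multiplier'' whose transfer function is essentially $g_m$, and that a multiplier whose symbol is entire of exponential order $k$ in all non-singular directions preserves the class $\Oo^k(\hat S_d,\EE)$. Concretely, I would pass through the auxiliary Cauchy problem of Proposition \ref{prop:aux0}: the series $\sum_{n\geq0}\tfrac{a_n}{m(n)\Gamma(1+n/k)}t^n$ can be realised as the value at $z=0$ of the solution, with datum $w(z)$, of a moment equation relating the sequence $m$ to $\mathbf 1$, exactly the mechanism sketched in the introduction (the moment version of the Ichinobe--Adachi result for $\kappa=\nu=1$). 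The analytic continuation and exponential bounds of that solution's boundary data $t\mapsto u(t,0)$ are governed by convolving $w$ against the kernels built from $g_m$ and $g_{m^{-1}}$, and the hypothesis gives precisely the $\Oo^k$ control on those kernels in every relevant direction.

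\textbf{Main obstacle.} The hard part is making the informal ``$1/m(n)$ is a harmless multiplier'' step rigorous: one needs an honest integral (or acceleration/convolution) operator implementing multiplication of Taylor coefficients by $1/m(n)$, and one must verify that it maps $\Oo^k(\hat S_d,\EE)$ into itself using only the two stated growth properties of $g_m$ and $g_{m^{-1}}$ in all directions $\theta\neq 0\mod2\pi$. The direction bookkeeping — ensuring the convolution integral can be taken along a ray avoiding the singular direction $0$ of the kernels while staying inside $\hat S_d$ — is the delicate point, and I expect this is exactly where the auxiliary moment Cauchy problems of Proposition \ref{prop:aux0} (and the lemmas on initial/boundary data promised for Section \ref{sec:8}) are invoked to transfer analytic continuation from $z\mapsto w(z)$ to $t\mapsto (\Bo_{m,t}\hat u)$-data; I would structure the proof around those lemmas rather than attempting a direct estimate.
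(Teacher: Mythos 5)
Your necessity argument is exactly the paper's: test on $\hat{x}(t)=\sum_n\Gamma(1+n/k)t^n$, use preservation for both $m$ and $m^{-1}$ (the latter via the group property of Remark \ref{re:groups}), and read off the condition from the definition of $k$-summability. That half is fine, and your reduction of the equivalence to the single implication ``$\hat{u}\in\EE\{t\}_{k,d}\Rightarrow\Bo_{m,t}\hat{u}\in\EE\{t\}_{k,d}$ for both $m$ and $m^{-1}$'' is also legitimate, since $\hat u=\Bo_{m^{-1},t}(\Bo_{m,t}\hat u)$ and the hypothesis is symmetric in $m$ and $m^{-1}$.

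The sufficiency half, however, has a genuine gap: you correctly set up the auxiliary problem of Proposition \ref{prop:aux0} with datum $\varphi=\Bo_{\Gamma_{1/k},t}\hat u\in\Oo^k(\hat S_d,\EE)$ and the sequence $\tilde m=(m(n)n!)_{n\geq0}$, so that $u(t,0)=\Bo_{\Gamma_{1/k},t}\bigl(\Bo_{m,t}\hat u\bigr)$, but the entire analytic content of the theorem is precisely the step you defer as the ``main obstacle'': proving $u(t,0)\in\Oo^k(\hat S_d,\EE)$ from the two growth hypotheses alone. The paper does this by writing the multiplier as a Hadamard-type Cauchy integral, $u(t,0)=\frac{1}{2\pi i}\oint_{|\zeta|=\rho}\frac{\varphi(\zeta)}{\zeta}\,\psi(t/\zeta)\,d\zeta$ with kernel $\psi=\Bo_{m,t}\bigl(\sum_n t^n\bigr)$, deforming the contour to $\partial(\hat S_d\cap D_R)$ and splitting it: on the outer arc one takes $R=2|t|/r$ (with $r$ the radius of convergence of $\psi$) so that $\psi(t/\zeta)$ stays bounded and the growth of order $k$ comes from $\varphi$; on the radial parts $\arg(t/\zeta)\not\equiv 0\ (\mathrm{mod}\ 2\pi)$, so the hypothesis $\psi\in\Oo^k(\hat S_\theta)$ for $\theta\neq0\bmod 2\pi$ supplies the order-$k$ bound for the kernel. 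None of this appears in your proposal. Moreover, your suggested fallback --- structuring the argument around the initial/boundary-data lemmas of Section \ref{sec:8} (Lemmas \ref{lem:1}--\ref{lem:3}) --- cannot close the gap: those lemmas concern the specific sequences $(\mathfrak m(n))$ and $(\mathfrak m(n)[n]_q!)$ for a moment function $\mathfrak m$ of order $1/k$, and their proofs rest on the $q$-binomial theorem and Heine's transformation, i.e.\ on special identities for $[n]_q!$, not on the two growth conditions on $g_m$ and $g_{m^{-1}}$ that are the only data available for an arbitrary sequence $m$ in Theorem \ref{th:sequence_summability}. So as written the sufficiency direction is a plausible plan pointing at the right mechanism, but the decisive contour-deformation estimate still has to be supplied.
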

 \begin{proof}
 \par
 ($\Rightarrow$)
 Take any $k>0$ and $\theta\neq 0 \mod 2\pi$. Let $\hat{u}(t):=\sum_{n=0}^{\infty}\Gamma(1+n/k)t^n$. Since 
 $$
 \Bo_{\Gamma_{1/k},t}\hat{u}(t)=\sum_{n=0}^{\infty}t^n=\frac{1}{1-t}\in\Oo^k(\hat{S}_{\theta}),
 $$
 we see that $\hat{u}$ is $k$-summable in a direction $\theta$. It means that also $\Bo_{m,t}\hat{u}(t)$ and $\Bo_{m^{-1},t}\hat{u}(t)$ are $k$-summable in a direction $\theta$ for any sequence $m$ preserving summability. Hence we conclude that 
 \begin{equation*}
  \Bo_{\Gamma_{1/k},t}\big(\Bo_{m,t}\hat{u}\big)=\Bo_{m,t}\big(\sum_{n=0}^{\infty}t^n\big)\in\Oo^k(\hat{S}_{\theta})\quad\textrm{and}\quad
  \Bo_{\Gamma_{1/k},t}\big(\Bo_{m^{-1},t}\hat{u}\big)=\Bo_{m^{-1},t}\big(\sum_{n=0}^{\infty}t^n\big)\in\Oo^k(\hat{S}_{\theta}).
 \end{equation*}
 
 ($\Leftarrow$)
 Take any $k>0$ and $d\in\RR$. Assume that $\hat{x}(t)=\sum_{n=0}^{\infty}x_nt^n\in\EE[[t]]$ is $k$-summable in a direction~$d$. It is sufficient to show that also $\Bo_{m,t}\hat{x}(t)$ and $\Bo_{m^{-1},t}\hat{x}(t)$ are $k$ summable in the same direction~$d$.
 
 Since $\hat{x}(t)\in\EE\{t\}_{k,d}$, we see that the function $\varphi(t):=\Bo_{\Gamma_{1/k},t}\hat{x}(t)$ belongs to the space $\Oo^k(\hat{S}_d,\EE)$. Let $\hat{u}(t,z)$ be a solution of the Cauchy problem
\begin{equation}
\label{eq:auxiliary}
    \left\{
    \begin{array}{l}
     (\partial_{\tilde{m},t}-\partial_{z})u=0\\
     u(0,z)=\varphi(z)\in\Oo^k(\hat{S}_d,\EE),
    \end{array}
    \right.
 \end{equation}
where $\tilde{m}=(m(n)n!)_{n\geq 0}$. Then since $\hat{u}(t,z)=\sum_{n\geq0}\frac{\partial_z^n\varphi(z)}{\tilde{m}(n)}t^n$ from Proposition \ref{prop:aux0}, we have
\begin{equation*}
 \hat{u}(t,0)=\sum_{n=0}^{\infty}\frac{\varphi^{(n)}(0)}{m(n)n!}t^n=\sum_{n=0}^{\infty}\frac{x_n}{\Gamma(1+n/k)m(n)}t^n=
 \Bo_{\Gamma_{1/k},t}\big(\Bo_{m,t}\hat{x}(t)\big),
\end{equation*}
which is convergent at $t=0$. 
To prove that $\Bo_{m,t}\hat{x}(t)$ is $k$-summable in the direction $d$, it is sufficient to show that $u(t,0)\in\Oo^k(\hat{S}_d,\EE)$. To this end observe that using the integral representation of $u$ we get
\begin{equation}
 \label{eq:u(t,0)}
 u(t,0)=\frac{1}{2\pi i}\oint_{|\zeta|=\rho}\frac{\varphi(\zeta)}{\zeta}\Big(\sum_{n=0}^{\infty}\frac{(t/\zeta)^n}{m(n)}\Big)\,d\zeta=\frac{1}{2\pi i}\oint_{|\zeta|=\rho}\frac{\varphi(\zeta)}{\zeta}\psi(t/\zeta)\,d\zeta
\end{equation}
for sufficiently small $\rho>0$, where the kernel $\psi$ is defined as
\begin{equation}
\label{eq:kernel}
\psi(t):=\sum_{n=0}^{\infty}\frac{t^n}{m(n)}=\Bo_{m,t}\big(\sum_{n=0}^{\infty}t^n\big).
\end{equation}
Observe that  $\psi\in\Oo^{k}(\hat{S}_{\theta})$ for every $\theta\neq 0\mod 2\pi$ by the assumption. In particular the power series $\psi$ has a positive radius of convergence $r>0$, i.e. $\psi\in\Oo(D_r)$.   
Since $\varphi\in\Oo(\hat{S}_d,\EE)$ and $\psi\in\Oo(\hat{S}_{\theta})$ for every $\theta\neq 0\mod 2\pi$ we may deform the path of integration in (\ref{eq:u(t,0)}) from $\zeta\in\partial D_{\rho}$ to $\zeta\in\Gamma(R):=\partial(\hat{S}_d\cap D_R)$.  
Exactly, whenever $d\not=0\mod 2\pi$, we can deform such a path of integration. When $d=0\mod 2\pi$, we can deform such a path of integration that $|\frac{t}{\zeta}|$ is bounded as $|t|\to\infty$ in $\hat{S}_d$. 
Taking $R\to\infty$ we see that $u(t,0)\in\Oo(\hat{S}_d,\EE)$ for some disc-sector $\hat{S}_d$.

To estimate $\|u(t,0)\|_{\EE}$ for $t\in\hat{S}_d$, $|t|\to\infty$, we split the contour $\Gamma(R)$ into 2 arcs $\Gamma_1(R):=\Gamma(R)\cap(\partial D_R)$ and $\Gamma_2(R):=\Gamma(R)\cap D_R$. Then we get
\begin{equation}
\label{eq:split}
u(t,0)=\frac{1}{2\pi i}\oint_{\Gamma(R)}\frac{\varphi(\zeta)}{\zeta}\psi(t/\zeta)\,d\zeta=\frac{1}{2\pi i}\int_{\Gamma_1(R)}\frac{\varphi(\zeta)}{\zeta}\psi(t/\zeta)\,d\zeta+\frac{1}{2\pi i}\int_{\Gamma_2(R)}\frac{\varphi(\zeta)}{\zeta}\psi(t/\zeta)\,d\zeta.
\end{equation}
We may
estimate $u(t,0)$ for $t\in\hat{S}_d$ with $|t|\to\infty$, as in \cite[Lemma 5]{Mic7} or \cite[Lemma 4]{Mic8}.

Namely, if $\zeta\in\Gamma_1(R)$ then $|\zeta|=R$ and $\zeta\in\hat{S}_d$. Taking $R=2|t|/r$, where the constant $r>0$ is the radius of convergence of $\psi$, we see that $R$ and $t$ both go to infinity together and that the function $t\mapsto\psi(t/\zeta)$ is bounded. Since moreover $\varphi\in\Oo^k(\hat{S}_d,\EE)$, we conclude that the first integral on the right-hand side of (\ref{eq:split}) has exponential growth of order $k$ as $|t|\to\infty$ in $\hat{S}_d$.

To estimate the second integral, observe that if $\zeta\in\Gamma_2(R)$ then $\arg\zeta\neq d\mod2\pi$. It means that the function $t\mapsto\psi(t/\zeta)$ has exponential growth of order $k$ as $|t|\to\infty$ in $\hat{S}_d$. Since moreover $\varphi\in\Oo^k(\hat{S}_d,\EE)$, in this case we also conclude that the second integral on the right-hand side of (\ref{eq:split}) has exponential growth of order $k$ as $|t|\to\infty$ in $\hat{S}_d$.

Hence the function $t\mapsto u(t,0)$ has also exponential growth of order $k$ as $|t|\to\infty$ in $\hat{S_d}$ and $\Bo_{m,t}\hat{x}(t)$ is $k$-summable in the direction $d$.

Replacing $m$ by $m^{-1}$ and repeating the above proof we conclude  that $\Bo_{m^{-1},t}\hat{x}(t)$ is also $k$-summable in the same direction $d$.
\end{proof}

\begin{Rem}
 In the special case when $m=(m(n))_{n\geq 0}=(\mathfrak{m}(n))_{n\geq 0}$ for some moment function $\mathfrak{m}(u)$ of order $0$, by Balser's moment method \cite{B2} we may express the kernel $\psi$ given by (\ref{eq:kernel}) as
 \begin{equation*}
  \psi(t)=\tilde{\Bo}_{m,t}\big(\frac{1}{1-t}\big)\quad\textrm{for some integral operator}\quad \tilde{\Bo}_{m,t}.
 \end{equation*}
Moreover, since by Definition \ref{df:moment_general} we may write $\mathfrak{m}(u)=\mathfrak{m}_1(u)/\mathfrak{m}_2(u)$ for some fixed moment functions $\mathfrak{m}_1(u)$ and $\mathfrak{m}_2(u)$
of the same order $1/k<2$, by \cite[Section 5.5]{B2} we conclude that
$\tilde{\Bo}_{m,t}=T^{-}_{\mathfrak{m}_1}\circ T^{+}_{\mathfrak{m}_2}$,
where $T^{-}_{\mathfrak{m}_1}$ and $T^{+}_{\mathfrak{m}_2}$ are two integral operators which are generalisations of Borel and Laplace operators and are  defined respectively by \cite[formula (5.14)]{B2} and \cite[formula (5.13)]{B2} in terms of the kernel functions $E_{\mathfrak{m}_1}$ and $e_{\mathfrak{m}_2}$. 
\end{Rem}

\begin{Rem}
 Observe, that we can formulate Proposition \ref{prop:preserve} in the similar way to Theorem \ref{th:sequence_summability}:
 
 \emph{
  A sequence $m=(m(n))_{n\geq 0}$ preserves Gevrey order if and only if there exists a disc $D$ such that
  \begin{equation*}
  \Bo_{m,t}\big(\sum_{n=0}^{\infty}t^n\big)\in\Oo(D)\quad \textrm{and}\quad \Bo_{m^{-1},t}\big(\sum_{n=0}^{\infty}t^n\big)\in\Oo(D).
  \end{equation*}
  }
\end{Rem}

\begin{Rem}
 The characterisation of sequences preserving summability given in Theorem \ref{th:sequence_summability} is similar in spirit to \cite[Lemma 6]{B0}.
\end{Rem}

\begin{Rem}
 In the proof of Theorem \ref{th:sequence_summability} we apply a new idea of use of auxiliary Cauchy problem for the simple moment equation (\ref{eq:auxiliary}), where we  study the relation between the initial data $\varphi(z)=u(0,z)$ and the boundary data $t\mapsto u(t,0)$.
 This auxiliary Cauchy problem and its solution gives us a new tool which transforms the power series $z\mapsto u(0,z)$ onto the power series $t\mapsto u(t,0)$.
 The same tool but for other simple moment equations is  used in the next section (see Lemmas \ref{lem:1}, \ref{lem:2} and \ref{lem:3} below) to prove Theorem \ref{th:main}.    
 \end{Rem}

\section{The main result} \label{sec:8} 
By (\ref{eq:est_nq!}) the sequence  $([n]_q!)_{n\geq 0}$ preserves Gevrey order. In this section we prove the main result of the paper, which says that this sequence 
also preserves summability.

To this end we need a few lemmas. The first one shows that the solution of the moment equation (\ref{eq:v}) has the same boundary and initial condition.
\begin{Lem}
\label{lem:1}
 Let $k>0$, $d\in\RR$ and $\mathfrak{m}(u)$ be a moment function of order $1/k$.
 Let $\hat{v}(t,z)\in\Oo(D,\EE)[[t]]$ be a formal solution of the Cauchy problem
 \begin{equation}
 \label{eq:v}
    \left\{
    \begin{array}{l}
     (\partial_{\mathfrak{m},t}-\partial_{\mathfrak{m},z})v=0\\
     v(0,z)=\varphi(z)\in\Oo(D,\EE).
    \end{array}
    \right..
 \end{equation}
 Then $v(t,z)\in\Oo(D^2,\EE)$ and $\psi(t)=\varphi(t)$, where $\psi(t):=v(t,0)$.
\end{Lem}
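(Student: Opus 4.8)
The plan is to solve \eqref{eq:v} explicitly and then read off the claimed identity $\psi(t)=\varphi(t)$ from the symmetry of the solution in the two variables. First I would apply Proposition \ref{prop:aux0} with $m_1=m_2=(\mathfrak{m}(n))_{n\geq 0}$ to get the formal solution
\[
\hat{v}(t,z)=\sum_{n\geq 0}\frac{\partial_{\mathfrak{m},z}^n\varphi(z)}{\mathfrak{m}(n)}t^n.
\]
Writing $\varphi(z)=\sum_{j\geq 0}a_j z^j$ and using Remark \ref{re:diff} to compute $\partial_{\mathfrak{m},z}^n$, one finds
\[
\hat{v}(t,z)=\sum_{n\geq 0}\sum_{j\geq 0}\frac{\mathfrak{m}(n+j)}{\mathfrak{m}(n)\mathfrak{m}(j)}\,a_{n+j}\,\frac{t^n z^j}{\mathfrak{m}(n)}\cdot\frac{\mathfrak{m}(n)}{1}
\]
— more cleanly, collecting the coefficient of $t^n z^j$, the double series is visibly symmetric under the exchange $(t,n)\leftrightarrow(z,j)$, so $\hat{v}(t,z)=\hat{v}(z,t)$ as formal series. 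In particular $\hat{v}(t,0)=\hat{v}(0,t)=\varphi(t)$ at the level of formal power series, which already gives $\psi=\varphi$ once convergence is established.

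Next I would establish convergence, i.e. $v\in\Oo(D^2,\EE)$. Since $\varphi\in\Oo(D,\EE)$, there is $r>0$ and $M<\infty$ with $\|a_j\|_{\EE}\le M r^{-j}$. Because $\mathfrak{m}(u)$ is a moment function of order $1/k$, by Remark \ref{rem:7} the sequence $(\mathfrak{m}(n))_{n\geq 0}$ is a sequence of order $1/k$, so the quotients $\mathfrak{m}(n+j)/(\mathfrak{m}(n)\mathfrak{m}(j))$ are bounded by $C^{n+j}$ for some $C$ — here I would quote the standard submultiplicativity-type estimate for sequences of positive order, which follows directly from \eqref{eq:sequence_order} (the $(n!)^{1/k}$ factors contribute $\binom{n+j}{n}^{1/k}\le 2^{(n+j)/k}$, and the geometric factors recombine). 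Combining these bounds shows the double series for $\hat v(t,z)$ converges absolutely on a polydisc $D_{r'}\times D_{r'}$ for sufficiently small $r'>0$, hence $v\in\Oo(D^2,\EE)$ after shrinking $D$. Then $\psi(t)=v(t,0)$ is the holomorphic function represented by the convergent series $\sum_n a_n t^n=\varphi(t)$, giving $\psi=\varphi$ on a common disc.

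The main obstacle is the convergence bookkeeping: one must be careful that the moment-quotient bound $\mathfrak{m}(n+j)/(\mathfrak{m}(n)\mathfrak{m}(j))\le C^{n+j}$ holds with a constant independent of $n,j$, and that after multiplying by $\|a_{n+j}\|_{\EE}\le Mr^{-(n+j)}$ and summing the binomial coefficient $\binom{n+j}{n}$ one still gets convergence on a (possibly smaller) polydisc. All of this is elementary given Remark \ref{rem:7} and \eqref{eq:sequence_order}, so the substance of the lemma is really the symmetry observation $\hat v(t,z)=\hat v(z,t)$, which is immediate once the coefficients are written out.
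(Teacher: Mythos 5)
Your proposal is correct, and its skeleton is the same as the paper's: both start from Proposition \ref{prop:aux0}, which gives $\hat v(t,z)=\sum_{n\ge0}\partial^n_{\mathfrak{m},z}\varphi(z)\,t^n/\mathfrak{m}(n)$, then establish convergence on a polydisc, then evaluate at $z=0$. The only differences are in how the two sub-steps are implemented. For convergence the paper quotes \cite[Lemma 1]{Mic7}, which yields $\sup_{|z|<r}\|\partial^n_{\mathfrak{m},z}\varphi(z)\|_{\EE}\le AB^n\mathfrak{m}(n)$, whereas you derive the needed bound $\mathfrak{m}(n+j)\le C^{n+j}\mathfrak{m}(n)\mathfrak{m}(j)$ directly from Remark \ref{rem:7} and \eqref{eq:sequence_order}; this is a valid, self-contained substitute (note that the binomial factor is already absorbed into $C^{n+j}$, so there is nothing further to sum over --- each pair $(n,j)$ contributes a single term bounded by $M(C|t|/r)^n(C|z|/r)^j$). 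For the identity $v(t,0)=\varphi(t)$ the paper invokes the moment Taylor formula (Proposition \ref{prop:taylor} and Remark \ref{re:u}), while you observe that the coefficient of $t^nz^j$, namely $\mathfrak{m}(n+j)a_{n+j}/(\mathfrak{m}(n)\mathfrak{m}(j))$, is symmetric in $(n,j)$, so $\hat v(t,z)=\hat v(z,t)$; specialised to $j=0$ this is exactly the same computation as the Taylor-formula step. Either variant proves the lemma; yours avoids the external reference at the cost of a little more coefficient bookkeeping, and your first display should simply read $\hat v(t,z)=\sum_{n,j\ge0}\frac{\mathfrak{m}(n+j)}{\mathfrak{m}(n)\mathfrak{m}(j)}a_{n+j}\,t^nz^j$.
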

\begin{proof}
From Proposition \ref{prop:aux0}, the formal power series solution of (\ref{eq:v}) is given by
\begin{equation}
\label{eq:v_series}
 \hat{v}(t,z)=\sum_{n=0}^{\infty}\frac{\partial^n_{\mathfrak{m},z}\varphi(z)}{\mathfrak{m}(n)}t^n.
\end{equation}
By \cite[Lemma 1]{Mic7} there exists $r>0$ and $A,B<\infty$ such that
\begin{equation}
\label{eq:phi}
 \sup_{|z|<r} \|\partial^n_{\mathfrak{m},z}\varphi(z)\|_{\EE}\leq A B^n \mathfrak{m}(n)\quad\textrm{for every}\quad n\in\NN_0.
\end{equation}
Hence the formal power series solution $\hat{v}(t,z)$ given by (\ref{eq:v_series}) is convergent for $|z|<r$ and $|t|<B^{-1}$, so $v(t,z)\in\Oo(D^2,\EE)$.

Evaluating $v(t,0)$ in (\ref{eq:v_series}) and using the moment Taylor formula for $\varphi(z)$ (Proposition \ref{prop:taylor} and Remark \ref{re:u}) we conclude that
\begin{equation*}
 \psi(t)=v(t,0)=\sum_{n=0}^{\infty}\frac{\partial^n_{\mathfrak{m},z}\varphi(0)}{\mathfrak{m}(n)}t^n=\varphi(z)|_{z=t}=\varphi(t).
\end{equation*}
\end{proof}

Similarly, the next two lemmas show that the initial and boundary data of the solution of the moment equation (\ref{eq:v_tilde_1}) have the same analytic continuation properties. These results are similar in spirit to \cite[Theorem 3.1]{I-A}.

First, following the proof of sufficiency in \cite[Theorem 3.1]{I-A} we will show
\begin{Lem}
\label{lem:2}
 Let $k>0$, $d\in\RR$, $\mathfrak{m}(u)$ be a moment function of order $1/k$ and $\tilde{m}:=(\mathfrak{m}(n)[n]_q!)_{n\geq 0}$ for $q\in(0,1)$.
 Let $\hat{u}(t,z)\in\Oo(D,\EE)[[t]]$ be a formal solution of the initial value problem
 \begin{equation}
 \label{eq:v_tilde_1}
    \left\{
    \begin{array}{l}
     (\partial_{\tilde{m},t}-\partial_{\mathfrak{m},z})u=0\\
     u(0,z)=\varphi(z)\in\Oo(D,\EE)
    \end{array}
    \right..
 \end{equation}
 Then $u(t,z)\in\Oo(D^2,\EE)$. If additionally
 $\varphi(z)\in\Oo^k(\hat{S}_d,\EE)$ then also
 $\tilde{\psi}(t)\in\Oo^k(\hat{S}_d,\EE)$, where $\tilde{\psi}(t):=u(t,0)$.
\end{Lem}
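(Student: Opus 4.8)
The plan is to follow the line of argument of the proof of Theorem \ref{th:sequence_summability}, with the kernel $\psi$ there replaced by the $q$-exponential-type kernel $\psi_q(t):=\sum_{n=0}^\infty t^n/[n]_q!$. First, by Proposition \ref{prop:aux0} the formal solution of (\ref{eq:v_tilde_1}) is
\[
\hat{u}(t,z)=\sum_{n=0}^\infty\frac{\partial^n_{\mathfrak{m},z}\varphi(z)}{\tilde{m}(n)}t^n=\sum_{n=0}^\infty\frac{\partial^n_{\mathfrak{m},z}\varphi(z)}{\mathfrak{m}(n)\,[n]_q!}t^n.
\]
By \cite[Lemma 1]{Mic7} there are $r>0$ and $A,B<\infty$ with $\sup_{|z|<r}\|\partial^n_{\mathfrak{m},z}\varphi(z)\|_{\EE}\le AB^n\mathfrak{m}(n)$ for all $n\in\NN_0$, and since $[n]_q!\ge 1$ by (\ref{eq:est_nq!}), the coefficient of $t^n$ above is bounded in norm by $AB^n$ for $|z|<r$; hence the series converges on a bidisc and $u\in\Oo(D^2,\EE)$.

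For the growth statement I evaluate the series at $z=0$. By Remark \ref{re:l} the quantity $\partial^n_{\mathfrak{m},z}\varphi(0)/\mathfrak{m}(n)$ equals the $n$-th Taylor coefficient $\varphi_n$ of $\varphi$ at the origin, so by Cauchy's integral formula, for sufficiently small $\rho>0$,
\[
\tilde{\psi}(t)=u(t,0)=\sum_{n=0}^\infty\frac{\varphi_n}{[n]_q!}t^n=\frac{1}{2\pi i}\oint_{|\zeta|=\rho}\frac{\varphi(\zeta)}{\zeta}\,\psi_q(t/\zeta)\,d\zeta .
\]
Next I identify $\psi_q$. Since $[n]_q!=(q;q)_n/(1-q)^n$, substituting $z=(1-q)t$ and applying the $q$-binomial theorem (Proposition \ref{prop:1} with $a=0$) gives
\[
\psi_q(t)=\sum_{n=0}^\infty\frac{((1-q)t)^n}{(q;q)_n}=\frac{1}{((1-q)t;q)_\infty}=\prod_{j=0}^\infty\frac{1}{1-(1-q)q^jt}\qquad\text{for }|t|<1/(1-q),
\]
and the right-hand side provides the meromorphic continuation of $\psi_q$ to $\CC$, whose only singularities are simple poles at the points $q^{-j}/(1-q)$, $j\ge0$, all lying on $\RR_+$. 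In particular $\psi_q$ has radius of convergence $1/(1-q)>0$, is holomorphic on $\hat{S}_\theta$ for every $\theta\neq 0\mod 2\pi$, and, as one checks from the product formula, is bounded — hence belongs to $\Oo^k(\hat{S}_\theta)$ for every $k>0$ — on every such disc-sector.

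With these properties of the kernel at hand, the estimate of $\tilde{\psi}$ is carried out exactly as in the proof of Theorem \ref{th:sequence_summability} (compare also \cite[Lemma 5]{Mic7} and \cite[Lemma 4]{Mic8}). Using the holomorphy of $\varphi$ on $\hat{S}_d$ and of $\psi_q$ off the positive real axis I deform the contour in the integral above from $\{|\zeta|=\rho\}$ to $\Gamma(R):=\partial(\hat{S}_d\cap D_R)$ — choosing, in the case $d=0\mod 2\pi$, a path along which $|t/\zeta|$ stays bounded as $|t|\to\infty$ in $\hat{S}_d$ — and let $R\to\infty$, which yields $\tilde{\psi}\in\Oo(\hat{S}_d,\EE)$. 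Splitting $\Gamma(R)$ into the outer arc $\Gamma_1(R)\subset\partial D_R$ and the inner part $\Gamma_2(R)\subset D_R$, and taking $R=2|t|/r$ with $r=1/(1-q)$: on $\Gamma_1(R)$ one has $|t/\zeta|=r/2$, so $\psi_q(t/\zeta)$ is bounded while $\|\varphi(\zeta)\|_{\EE}\le Ae^{B(2|t|/r)^k}$ by hypothesis; on $\Gamma_2(R)$ one has $\arg\zeta\neq d\mod 2\pi$, so $t/\zeta$ lies in a direction away from $\RR_+$ where $\psi_q$ is bounded, while $\|\varphi(\zeta)\|_{\EE}\le Ae^{BR^k}$. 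After absorbing the factor $1/\zeta$ and the length $O(R)$ of $\Gamma(R)$, both integrals have exponential growth of order $k$ in $|t|$, so $\tilde{\psi}(t)=u(t,0)\in\Oo^k(\hat{S}_d,\EE)$.

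The main obstacle is the middle step: recognising the kernel series $\sum t^n/[n]_q!$ as $1/((1-q)t;q)_\infty$ by means of the $q$-binomial theorem, and then verifying from the resulting product formula that this function continues holomorphically to $\CC$ minus the discrete set $\{q^{-j}/(1-q):j\ge0\}\subset\RR_+$ and stays bounded in every sector avoiding $\RR_+$. Once the analytic continuation and growth of $\psi_q$ are in place, the remainder is the by-now standard bookkeeping of the Borel-type integral representation together with the two-arc contour estimate, entirely parallel to the proof of Theorem \ref{th:sequence_summability}.
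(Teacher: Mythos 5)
Your proposal is correct, but it takes a genuinely different route at the decisive step. The paper, following the proof of sufficiency in \cite[Theorem 3.1]{I-A}, evaluates the Cauchy-type integral \emph{exactly} by the residue theorem: after identifying the kernel as $1/\bigl(\tfrac{(1-q)t}{\zeta};q\bigr)_{\infty}$, the poles $\zeta_n(t)=(1-q)tq^n$ yield the explicit $q$-translation series $\tilde{\psi}(t)=\sum_{n\ge0}\varphi\bigl((1-q)q^nt\bigr)\tfrac{(-1)^nq^{n(n+1)/2}}{(q;q)_n(q;q)_{\infty}}$, from which analytic continuation to $\hat{S}_d$ and the order-$k$ bound are immediate, since the points $(1-q)q^nt$ lie on the ray through $t$ with smaller modulus and the numerical series converges. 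You instead identify the same kernel $\psi_q(t)=1/((1-q)t;q)_{\infty}$ via the $q$-binomial theorem, verify that it is holomorphic and bounded on (proper subsectors of) every disc-sector $\hat{S}_{\theta}$ with $\theta\neq0\bmod 2\pi$, and then rerun the generic contour-deformation and two-arc estimate from the proof of Theorem \ref{th:sequence_summability}; in effect you check one of the two kernel conditions of that theorem for $m=([n]_q!)_{n\geq0}$ and treat Lemma \ref{lem:2} as an instance of that machinery, which works for arbitrary $\varphi\in\Oo^k(\hat{S}_d,\EE)$ just as needed. Both arguments are sound; the paper's residue computation buys a clean closed formula and avoids repeating the contour estimates, while your version makes the structural link to Theorem \ref{th:sequence_summability} explicit. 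Two points you state somewhat tersely and should make sure of: the uniform (in $j$ and $|t|$) boundedness of the partial factors of $\prod_{j\ge0}(1-(1-q)q^jt)^{-1}$ on closed subsectors avoiding $\RR_+$, and the legitimacy of the contour deformation given that the $\zeta$-poles $(1-q)q^jt$ of the kernel lie inside the sector on the ray through $t$ (they sit inside the original circle $|\zeta|=\rho$ for small $t$ and never cross $\Gamma(R)$ with $R=2|t|/r$, so no residues are picked up) — both check out, and the level of detail is comparable to the paper's own treatment in Theorem \ref{th:sequence_summability}.
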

\begin{proof}
From Proposition \ref{prop:aux0}, the formal power series solution of (\ref{eq:v_tilde_1}) is given by
\begin{equation}
\label{eq:v_tilde}
 \hat{u}(t,z)=\sum_{n=0}^{\infty}\frac{\partial^n_{\mathfrak{m},z}\varphi(z)}{\tilde{m}(n)}t^n.
\end{equation}
By (\ref{eq:phi}) and (\ref{eq:est_nq!}) the formal power series solution $\hat{u}(t,z)$ given by (\ref{eq:v_tilde}) is convergent for $|z|<r$ and $|t|<B^{-1}$, so $u(t,z)\in\Oo(D^2,\EE)$.

Using the moment Taylor formula (Proposition \ref{prop:taylor}) and Taylor formula for $\varphi(z)\in\Oo(D,\EE)$ (see also Remark \ref{re:l}) we conclude that 
\begin{equation*}
 \frac{\partial^n_{\mathfrak{m},z}\varphi(0)}{\mathfrak{m}(n)}=\frac{\varphi^{(n)}(0)}{n!}\quad\textrm{for every}\quad n\in\NN_0.
\end{equation*}
Hence by (\ref{eq:v_tilde})
\begin{equation*}
 u(t,0)=\sum_{n=0}^{\infty}\frac{\partial^n_{\mathfrak{m},z}\varphi(0)}{\tilde{m}(n)}t^n=\sum_{n=0}^{\infty}\frac{\partial^n_{\mathfrak{m},z}\varphi(0)}{\mathfrak{m}(n)[n]_q!}t^n=
 \sum_{n=0}^{\infty}\frac{\varphi^{(n)}(0)}{n![n]_q!}t^n.
\end{equation*}

We will follow the proof of sufficiency in \cite[Theorem 3.1]{I-A} with $\kappa=\nu=1$ and $x=0$. Since $[n]_q!=\frac{(q;q)_n}{(1-q)^n}$, by the Cauchy integral formula we get
\begin{equation*}
 \tilde{\psi}(t)=u(t,0)=\frac{1}{2\pi i}\oint_{|\zeta|=\rho}\frac{\varphi(\zeta)}{\zeta}\sum_{n=0}^{\infty}\frac{1}{(q;q)_n}\Big(\frac{(1-q)t}{\zeta}\Big)^n\,d\zeta
 \end{equation*}
 for sufficiently small $|t|$.
 Moreover, by (\ref{eq:q_binomial}) and the $q$-binomial theorem (Proposition \ref{prop:1})
 \begin{equation*}
 \sum_{n=0}^{\infty}\frac{1}{(q;q)_n}\Big(\frac{(1-q)t}{\zeta}\Big)^n=
 {}_{1}\phi_0\left(\begin{array}{c}
                       0\\
                       -
                     \end{array}
;q,\frac{(1-q)t}{\zeta}\right)=\frac{1}{\Big(\frac{(1-q)t}{\zeta};q\Big)_{\infty}}=
\prod_{n=0}^{\infty}\frac{\zeta}{\zeta-(1-q)tq^n}.
 \end{equation*}
 Observe that for fixed $t\neq 0$ the function 
 \begin{equation*}
 \zeta\longmapsto\prod_{n=0}^{\infty}\frac{\zeta}{\zeta-(1-q)tq^n}
 \end{equation*}
 is meromorphic on $\CC$ with simple poles at
\begin{equation*}
 \zeta=\zeta_n(t):=(1-q)tq^n\quad\text{for}\quad n\in\NN_0.
\end{equation*}
Hence, by the residue theorem we get
\begin{equation*}
 \tilde{\psi}(t)=\sum_{n=0}^{\infty}\varphi((1-q)tq^n)\mathop{\mathrm{Res}}_{\zeta=\zeta_n(t)}\frac{1}{\big(\frac{(1-q)t}{\zeta};q\big)_{\infty}}\frac{1}{\zeta}
 =\sum_{n=0}^{\infty}\varphi((1-q)tq^n)\frac{(-1)^n q^{\frac{n(n+1)}{2}}}{(q;q)_n(q;q)_{\infty}}.
\end{equation*}
Since $\varphi(z)\in\Oo^k(\EE,\hat{S}_d)$, there exist $A,B<\infty$ such that $\|\varphi(z)\|_{\EE}\leq Ae^{B|z|^k}$ for every $z\in\hat{S}_d$. Hence
\begin{equation*}
 \|\tilde{\psi}(t)\|_{\EE}\leq \frac{A}{(q;q)_{\infty}}e^{B(1-q)^k|t|^k}\sum_{n=0}^{\infty}\frac{q^{\frac{n(n+1)}{2}}}{(q;q)_n}\leq \frac{A}{(q;q)_{\infty}}e^{\tilde{B}|t|^k}\sum_{n=0}^{\infty}\Big(\frac{q^{\frac{n+1}{2}}}{1-q}\Big)^n\leq \tilde{A}e^{\tilde{B}|t|^k}
\end{equation*}
for some positive constants $\tilde{A},\tilde{B}<\infty$ and for every $t\in \hat{S}_d$. It means that $\tilde{\psi}(t)\in\Oo^k(\hat{S}_d,\EE)$.  
\end{proof}

Next, following the proof of necessity in \cite[Theorem 3.1]{I-A} we will prove
\begin{Lem}
\label{lem:3}
Let $k>0$, $d\in\RR$, $\mathfrak{m}(u)$ be a moment function of order $1/k$ and $\tilde{m}:=(\mathfrak{m}(n)[n]_q!)_{n\geq 0}$ for $q\in(0,1)$.
 Let $\hat{u}(t,z)\in\Oo(D,\EE)[[z]]$ be a formal solution of the boundary value problem
 \begin{equation}
  \label{eq:v_tilde_2}
    \left\{
    \begin{array}{l}
     (\partial_{\tilde{m},t}-\partial_{\mathfrak{m},z})u=0\\
     u(t,0)=\tilde{\psi}(t)\in\Oo(D,\EE)
    \end{array}
    \right.
 \end{equation}
 Then $u(t,z)\in\Oo(D^2,\EE)$. If additionally
 $\tilde{\psi}(t)\in\Oo^k(\hat{S}_d,\EE)$ then also
 $\varphi(z)\in\Oo^k(\hat{S}_d,\EE)$, where $\varphi(z):=u(0,z)$.
\end{Lem}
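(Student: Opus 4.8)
The plan is to invert the roles of the variables $t$ and $z$ from Lemma \ref{lem:2}: there the initial datum $u(0,z)=\varphi(z)$ was given and we read off the boundary datum $u(t,0)=\tilde\psi(t)$; here the boundary datum $\tilde\psi(t)$ is prescribed and we must recover $\varphi(z)=u(0,z)$. First I would use Proposition \ref{prop:aux0} applied with the roles of the two variables exchanged. Writing $\hat u(t,z)=\sum_{n\geq0}\frac{u_n(t)}{\mathfrak m(n)}z^n$, the equation $(\partial_{\tilde m,t}-\partial_{\mathfrak m,z})u=0$ forces $u_{n+1}(t)=\partial_{\tilde m,t}u_n(t)$ with $u_0(t)=\tilde\psi(t)$, hence $u_n(t)=\partial^n_{\tilde m,t}\tilde\psi(t)$ and
\begin{equation*}
\hat u(t,z)=\sum_{n=0}^{\infty}\frac{\partial^n_{\tilde m,t}\tilde\psi(t)}{\mathfrak m(n)}z^n.
\end{equation*}
An estimate of the type \cite[Lemma 1]{Mic7} (applied to $\tilde\psi$ with respect to the sequence $\tilde m$, which is a sequence of order $1/k$ by Remark \ref{rem:7} and \eqref{eq:est_nq!}) gives $\sup_{|t|<r}\|\partial^n_{\tilde m,t}\tilde\psi(t)\|_{\EE}\leq AB^n\tilde m(n)$, and since $\tilde m(n)/\mathfrak m(n)=[n]_q!\leq(1-q)^{-n}$ by \eqref{eq:est_nq!}, the double series converges on a bidisc, so $u(t,z)\in\Oo(D^2,\EE)$.

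Next, evaluating at $t=0$ and using the moment Taylor formula (Proposition \ref{prop:taylor}) together with Remark \ref{re:l} to pass between the $\tilde m$-moment derivatives of $\tilde\psi$ at $0$ and the ordinary Taylor coefficients of $\tilde\psi$, I would obtain
\begin{equation*}
\varphi(z)=u(0,z)=\sum_{n=0}^{\infty}\frac{\partial^n_{\tilde m,t}\tilde\psi(0)}{\mathfrak m(n)}z^n=\sum_{n=0}^{\infty}\frac{\tilde\psi^{(n)}(0)\,[n]_q!}{n!}z^n.
\end{equation*}
By the Cauchy integral formula $\tilde\psi^{(n)}(0)/n!=\frac{1}{2\pi i}\oint_{|\zeta|=\rho}\frac{\tilde\psi(\zeta)}{\zeta^{n+1}}\,d\zeta$, so inserting this and summing the resulting geometric-type series in $z/\zeta$ weighted by $[n]_q!=(q;q)_n/(1-q)^n$ yields an integral representation
\begin{equation*}
\varphi(z)=\frac{1}{2\pi i}\oint_{|\zeta|=\rho}\frac{\tilde\psi(\zeta)}{\zeta}\sum_{n=0}^{\infty}\frac{(q;q)_n}{(1-q)^n}\Big(\frac{z}{\zeta}\Big)^n\,d\zeta.
\end{equation*}
Now the series $\sum_{n\geq0}\frac{(q;q)_n}{(1-q)^n}w^n$ has radius of convergence $0$, so unlike in Lemma \ref{lem:2} we cannot simply sum it to a meromorphic function; instead I would recognise it, via the $q$-binomial theorem (Proposition \ref{prop:1}) read backwards, or better via Heine's transformation (Proposition \ref{prop:2}), as related to a convergent basic hypergeometric series in the variable $b=q$. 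Concretely, following the proof of necessity in \cite[Theorem 3.1]{I-A} with $\kappa=\nu=1$ and $x=0$, one rewrites $\sum_n\frac{(q;q)_n}{(1-q)^n}(z/\zeta)^n$ using Heine's formula so that the divergent-looking sum is transformed into $\frac{(\text{something};q)_\infty}{(\text{something};q)_\infty}\cdot{}_2\phi_1(\dots;q,q)$, a genuinely convergent object; the net effect is to express $\varphi(z)$ again as a convergent series $\sum_{n=0}^{\infty}c_n\,\tilde\psi\big((1-q)zq^{-n}\big)$ or a similar combination of values of $\tilde\psi$ at points $\zeta_n(z)$ comparable to $z$, with coefficients $c_n$ decaying geometrically.

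The hard part is exactly this summation step: making rigorous sense of the divergent kernel $\sum_n [n]_q!\,w^n$ and converting the contour integral into a residue-type or transformed series in which only values $\tilde\psi(\lambda_n z)$ with $|\lambda_n|$ bounded (or growing slowly enough) appear, so that the exponential bound $\|\tilde\psi(t)\|_{\EE}\leq Ae^{B|t|^k}$ on $\hat S_d$ propagates. Once that representation is in place, the estimate is routine: substituting the bound for $\tilde\psi$ and using the geometric decay of the coefficients $c_n$ (controlled by powers of $q^{n(n+1)/2}/(q;q)_n$ as in the proof of Lemma \ref{lem:2}) gives $\|\varphi(z)\|_{\EE}\leq \tilde A e^{\tilde B|z|^k}$ for $z\in\hat S_d$, whence $\varphi\in\Oo^k(\hat S_d,\EE)$. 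I would present the $q$-series manipulation carefully, citing \cite[Theorem 3.1]{I-A} for the part that is literally the $\kappa=\nu=1$ specialisation of their necessity argument, and supply the short convergence/estimate bookkeeping that adapts it to the $\EE$-valued, moment-function setting.
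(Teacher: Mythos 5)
Your outline follows the paper's own proof almost step for step: Proposition \ref{prop:aux0} with the roles of $t$ and $z$ exchanged, the coefficient estimate to get $u\in\Oo(D^2,\EE)$, the moment Taylor formula plus Remark \ref{re:l} to identify $u(0,z)=\sum_{n\ge0}\frac{[n]_q!\,\tilde\psi^{(n)}(0)}{n!}z^n$, the Cauchy integral representation, Heine's transformation, the residue theorem, and the final exponential estimate, all modelled on the necessity part of \cite[Theorem 3.1]{I-A} with $\kappa=\nu=1$. Three details in your write-up need correcting, though none changes the strategy. First, the kernel $\sum_n (q;q)_n\big(\tfrac{z}{(1-q)\eta}\big)^n=\sum_n [n]_q!\,(z/\eta)^n$ is \emph{not} divergent: since $[n]_q!\le(1-q)^{-n}$ it converges for $|z/\eta|<1-q$; the point of Heine's formula is not to resum a divergent series but to continue the kernel meromorphically in $\eta$ past its circle of convergence, exhibiting the simple poles at $\eta_n(z)=(1-q)^{-1}zq^n$ so that the residue theorem applies (exactly as the $q$-binomial theorem did in Lemma \ref{lem:2}). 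Second, the resulting representation is $\varphi(z)=(q;q)_\infty\sum_{n\ge0}\tilde\psi\big(\tfrac{zq^n}{1-q}\big)\tfrac{q^n}{(q;q)_n}$; the evaluation points are the \emph{contracting} points $zq^n/(1-q)$, all of modulus at most $|z|/(1-q)$, which is precisely why the bound $\|\tilde\psi(t)\|_\EE\le Ae^{B|t|^k}$ yields $\|\varphi(z)\|_\EE\le\tilde Ae^{\tilde B|z|^k}$. Your tentative points $(1-q)zq^{-n}$ would escape to infinity and the estimate would collapse, so this is not a cosmetic slip and must come out right from the residue computation. Third, for the estimate $\sup_{|t|<r}\|\partial^n_{\tilde m,t}\tilde\psi(t)\|_\EE\le AB^n\tilde m(n)$ you should invoke \cite[Proposition 1]{L-Mi-Su}, valid for general sequences of positive numbers, rather than \cite[Lemma 1]{Mic7}: the latter is stated for moment functions, and $\tilde m=(\mathfrak m(n)[n]_q!)_{n\ge0}$ is not known to be a sequence of moments.
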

\begin{proof}
By Proposition \ref{prop:aux0} with replaced variables, the formal power series solution of (\ref{eq:v_tilde_2}), belonging to $\Oo(D,\EE)[[z]]$,  is given by
\begin{equation}
\label{eq:v_tilde2}
 \hat{u}(t,z)=\sum_{n=0}^{\infty}\frac{\partial^n_{\tilde{m},t}\tilde{\psi}(t)}{\mathfrak{m}(n)}z^n.
\end{equation}
By \cite[Proposition 1]{L-Mi-Su} and (\ref{eq:est_nq!})
 there exists $r>0$ and $A,B<\infty$ such that
\begin{equation*}
 \sup_{|t|<r} \|\partial^n_{\tilde{m},t}\tilde{\psi}(t)\|_{\EE}\leq A B^n \tilde{m}(n)\quad\textrm{for every}\quad n\in\NN_0.
\end{equation*}
Hence by (\ref{eq:est_nq!}) the formal power series solution $\hat{u}(t,z)$ given by (\ref{eq:v_tilde2}) is convergent for $|t|<r$ and $|z|<(1-q)B^{-1}$, so $u(t,z)\in\Oo(D^2,\EE)$.

Similarly to the proof of Lemma \ref{lem:2}, by the moment Taylor formula for $\tilde{\psi}(t)$ (Proposition \ref{prop:taylor} and Remark \ref{re:l}) we see that
\begin{equation*}
 \frac{\partial^n_{\tilde{m},t}\tilde{\psi}(0)}{\mathfrak{m}(n)[n]_q!}=\frac{\partial^n_{\tilde{m},t}\tilde{\psi}(0)}{\tilde{m}(n)}=\frac{\tilde{\psi}^{(n)}(0)}{n!}\quad\textrm{for every}\quad n\in\NN_0.
\end{equation*}
Hence using (\ref{eq:v_tilde2}) we conclude that
\begin{equation*}
 u(0,z)=\sum_{n=0}^{\infty}\frac{\partial^n_{\tilde{m},t}\tilde{\psi}(0)}{\mathfrak{m}(n)}z^n=\sum_{n=0}^{\infty}\frac{[n]_q!\tilde{\psi}^{(n)}(0)}{n!}z^n.
\end{equation*}
We will follow the proof of necessity in \cite[Theorem 3.1]{I-A} with $\kappa=\nu=1$ and $x_0=0$. By the Cauchy integral formula, we see that
\begin{equation*}
 \varphi(z)=u(0,z)=\frac{1}{2\pi i}\oint_{|\eta|=\rho}\frac{\tilde{\psi}(\eta)}{\eta}\sum_{n=0}^{\infty}(q;q)_n\Big(\frac{z}{(1-q)\eta}\Big)^n\,d\eta
 \end{equation*}
 for sufficiently small $|z|$. By (\ref{eq:q_binomial}) and Heine's transformation formula (Proposition \ref{prop:2}) we obtain 
 \begin{equation*}
  \sum_{n=0}^{\infty}(q;q)_n\Big(\frac{z}{(1-q)\eta}\Big)^n={}_{2}\phi_1\left(\begin{array}{c}
                       q,q\\
                       0
                     \end{array}
;q,\frac{z}{(1-q)\eta}\right)=\frac{\big(q,q\frac{z}{(1-q)\eta};q\big)_{\infty}}{\big(\frac{z}{(1-q)\eta};q\big)_{\infty}}
 \sum_{j=0}^{\infty}\frac{\big(\frac{z}{(1-q)\eta};q\big)_j}{\big(q\frac{z}{(1-q)\eta},q;q\big)_j}q^j.
 \end{equation*}
For fixed $z\not=0$, the function 
\begin{equation*}
\eta\longmapsto\frac{1}{\big(\frac{z}{(1-q)\eta};q\big)_{\infty}}=\prod_{n=0}^{\infty}\frac{\eta}{\eta-(1-q)^{-1}zq^n}
\end{equation*}
is meromorphic on $\CC$ with simple poles at
\begin{equation*}
 \eta=\eta_n(z):=(1-q)^{-1}zq^n\quad\text{for}\quad n\in\NN_0.
\end{equation*}
Using the residue theorem we see that
\begin{equation*}
 \varphi(z)=(q;q)_{\infty}\sum_{n=0}^{\infty}\tilde{\psi}\big( \frac{zq^n}{1-q}\big)\mathop{\mathrm{Res}}_{\eta=\eta_n(z)}\frac{1}{\big(\frac{z}{(1-q)\eta};q\big)_{\infty}}\frac{1}{\eta}(q^{1-n};q)_{\infty}\sum_{j=0}^{\infty}\frac{(q^{-n};q)_j}{(q^{1-n},q;q)_j}q^j.
\end{equation*}
Since $(q^{-n},q)_j=0$ for $j>n$ and $\frac{(q^{1-n};q)_{\infty}}{(q^{1-n};q)_j}=0$ for $j<n$, we get
\begin{equation*}
 (q^{1-n};q)_{\infty}\sum_{j=0}^{\infty}\frac{(q^{-n};q)_j}{(q^{1-n},q;q)_j}q^j
 =(q^{1-n};q)_{\infty}\frac{(q^{-n};q)_n}{(q^{1-n},q;q)_n}q^n
 =(q;q)_{\infty}\frac{(q^{-n};q)_n}{(q;q)_n}q^n.
\end{equation*}
Moreover
\begin{equation*}
 \mathop{\mathrm{Res}}_{\eta=\eta_n(z)}\frac{1}{\big(\frac{z}{(1-q)\eta};q\big)_{\infty}}\frac{1}{\eta}=\frac{1}{(q^{-n};q)_n (q;q)_{\infty}}.
\end{equation*}
Hence
\begin{equation*}
 \varphi(z)=(q;q)_{\infty}\sum_{n=0}^{\infty}\tilde{\psi}\big( \frac{zq^n}{1-q}\big)\frac{q^n}{(q;q)_n}.
\end{equation*}
Since there exist $A,B<\infty$ such that $\|\tilde{\psi}(t)\|_{\EE}\leq Ae^{B|t|^k}$ for every $t\in\hat{S}_d$, we conclude that
\begin{equation*}
 \|\varphi(z)\|_{\EE}\leq Ae^{B(1-q)^{-k}|z|^k} \sum_{n=0}^{\infty}\frac{(q;q)_{\infty}}{(q;q)_n}q^n\leq Ae^{\tilde{B}|z|^k} \sum_{n=0}^{\infty}q^n\leq 
 \tilde{A}e^{\tilde{B}|z|^k}
\end{equation*}
for some positive constants $\tilde{A},\tilde{B}<\infty$ and for every $z\in \hat{S}_d$. It means that
 $\varphi(z)\in\Oo^k(\hat{S}_d,\EE)$.  
\end{proof}
\bigskip\par
We are now ready to prove the main result of the paper
\begin{Thm}[The main theorem]
\label{th:main}
The sequence $([n]_q!)_{n\geq 0}$ for $q\in[0,1)$  preserves summability.
\end{Thm}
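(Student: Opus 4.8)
The plan is to reduce the assertion to the transport Lemmas \ref{lem:2} and \ref{lem:3} via the moment-function reformulation of $k$-summability. The case $q=0$ is immediate: then $[n]_q!=1$ for every $n$, so $([n]_0!)_{n\geq 0}=\mathbf 1$, which preserves summability by Example \ref{ex:ex_2}. From now on fix $q\in(0,1)$.

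Fix $k>0$, $d\in\RR$, and a moment function $\mathfrak m(u)$ of order $1/k$ (for instance $\mathfrak m(u)=\Gamma(1+u/k)$), and put $m:=([n]_q!)_{n\geq 0}$ and $\tilde m:=(\mathfrak m(n)[n]_q!)_{n\geq 0}$, which is a sequence of order $1/k$. Let $\hat x(t)=\sum_{n\geq 0}x_nt^n\in\EE[[t]]$. Since $\Bo_{\mathfrak m,t}\Bo_{m,t}=\Bo_{\tilde m,t}$ on $\EE[[t]]$, Remark \ref{re:general} gives that $\hat x\in\EE\{t\}_{k,d}$ if and only if $\Bo_{\mathfrak m,t}\hat x\in\Oo^k(\hat S_d,\EE)$, and that $\Bo_{m,t}\hat x\in\EE\{t\}_{k,d}$ if and only if $\Bo_{\tilde m,t}\hat x\in\Oo^k(\hat S_d,\EE)$. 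Hence it suffices to prove, for arbitrary $k$ and $d$, the equivalence
\[
\Bo_{\mathfrak m,t}\hat x\in\Oo^k(\hat S_d,\EE)\quad\Longleftrightarrow\quad\Bo_{\tilde m,t}\hat x\in\Oo^k(\hat S_d,\EE).
\]

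The bridge between these two transforms is the formal solution $\hat u(t,z)$ of the moment equation $(\partial_{\tilde m,t}-\partial_{\mathfrak m,z})u=0$ supplied by Proposition \ref{prop:aux0}. Using the moment Taylor formula (Proposition \ref{prop:taylor}) one checks that if the initial datum is $u(0,z)=\varphi(z):=\sum_{n\geq 0}\frac{x_n}{\mathfrak m(n)}z^n=\Bo_{\mathfrak m,z}\hat x(z)$, then the boundary datum is $u(t,0)=\sum_{n\geq 0}\frac{x_n}{\mathfrak m(n)[n]_q!}t^n=\Bo_{\tilde m,t}\hat x(t)$; and, applying Proposition \ref{prop:aux0} with the roles of $t$ and $z$ interchanged, the very same series $\hat u$ is the solution of the boundary value problem with datum $u(t,0)=\Bo_{\tilde m,t}\hat x(t)$, whose initial datum is then $u(0,z)=\Bo_{\mathfrak m,z}\hat x(z)$. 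By (\ref{eq:est_nq!}) we have $1\leq[n]_q!\leq(1-q)^{-n}$, so under either of the two summability hypotheses both $\varphi$ and $u(t,0)$ are holomorphic near the origin, and the hypotheses of Lemmas \ref{lem:2} and \ref{lem:3} are fulfilled. The forward implication of the displayed equivalence is then precisely Lemma \ref{lem:2} (from $\varphi\in\Oo^k(\hat S_d,\EE)$ conclude $u(t,0)=\Bo_{\tilde m,t}\hat x\in\Oo^k(\hat S_d,\EE)$), and the reverse implication is precisely Lemma \ref{lem:3} (from $u(t,0)=\Bo_{\tilde m,t}\hat x\in\Oo^k(\hat S_d,\EE)$ conclude $\varphi=\Bo_{\mathfrak m,z}\hat x\in\Oo^k(\hat S_d,\EE)$). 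Since $k$ and $d$ were arbitrary, the sequence $m=([n]_q!)_{n\geq 0}$ preserves summability.

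All the substantive work has already been built into Lemmas \ref{lem:2} and \ref{lem:3}: the $q$-binomial theorem (Proposition \ref{prop:1}) and Heine's transformation formula (Proposition \ref{prop:2}), followed by the residue expansions of $u(t,0)$ and $u(0,z)$ in terms of the rescaled samples $\varphi((1-q)tq^n)$ and $\tilde\psi(zq^n/(1-q))$ and the resulting exponential estimates. Consequently the only real care needed in the proof of the theorem itself is bookkeeping --- choosing $\mathfrak m$, $[n]_q!$ and $\tilde m$ so that the boundary datum of $(\partial_{\tilde m,t}-\partial_{\mathfrak m,z})u=0$ is literally $\Bo_{\tilde m,t}\hat x$ and its initial datum literally $\Bo_{\mathfrak m,z}\hat x$, and noticing that one and the same $\hat u$ simultaneously solves the initial value problem (\ref{eq:v_tilde_1}) and the boundary value problem (\ref{eq:v_tilde_2}). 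An alternative, avoiding the passage to general moment functions, is to verify the criterion of Theorem \ref{th:sequence_summability} directly: $\Bo_{m,t}\big(\sum_{n\geq 0}t^n\big)=\sum_{n\geq 0}t^n/[n]_q!=1/\big((1-q)t;q\big)_\infty$ and $\Bo_{m^{-1},t}\big(\sum_{n\geq 0}t^n\big)=\sum_{n\geq 0}[n]_q!\,t^n$ both extend to meromorphic functions on $\CC$ whose poles all lie on $\RR_+$, of growth order $0$ on every disc-sector $\hat S_\theta$ with $\theta\neq 0\bmod 2\pi$; this too rests on Propositions \ref{prop:1} and \ref{prop:2}.
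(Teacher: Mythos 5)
Your proof is correct and follows essentially the same route as the paper: reduce via Remark \ref{re:general} and a moment function $\mathfrak m$ of order $1/k$ to comparing $\Bo_{\mathfrak m,t}\hat x$ with $\Bo_{\tilde m,t}\hat x$, and transfer the exponential growth between the initial datum and the boundary datum of the auxiliary equation $(\partial_{\tilde m,t}-\partial_{\mathfrak m,z})u=0$ by means of Lemmas \ref{lem:2} and \ref{lem:3}. The only difference is cosmetic: you read off the identifications $u(t,0)=\Bo_{\tilde m,t}\hat x$ and $u(0,z)=\Bo_{\mathfrak m,z}\hat x$ directly from the moment Taylor formula, whereas the paper routes this bookkeeping through the extra auxiliary problem of Lemma \ref{lem:1}.
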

\begin{proof}
If $q=0$ then $([n]_q!)_{n\geq 0}=(1)_{n\geq 0}$ preserves summability in a trivial way. So we may assume that $q\in(0,1)$.

First, we will prove that if $\sum_{n=0}^{\infty}x_nt^n\in\EE\{t\}_{k,d}$ then also $\sum_{n=0}^{\infty}\frac{x_n}{[n]_q!}t^n\in\EE\{t\}_{k,d}$.

Fix $k>0$ and $d\in\RR$. We assume that $\hat{x}(t)=\sum_{n=0}^{\infty}x_nt^n\in\EE\{t\}_{k,d}$. Let $\mathfrak{m}=(\mathfrak{m}(n))_{n\geq0}$, where $\mathfrak{m}(u)$ is a moment function of order $1/k$. Then $\psi(t):=\Bo_{\mathfrak{m},t}\hat{x}(t)=\sum_{n=0}^{\infty}\frac{x_n}{\mathfrak{m}(n)}t^n\in\Oo^k(\hat{S}_d,\EE)$. 

Let $v=v(t,z)\in\Oo(D^2,\EE)$ be a solution of the equation 
\begin{equation}
\label{eq:v_thm2}
    \left\{
    \begin{array}{l}
     (\partial_{\mathfrak{m},t}-\partial_{\mathfrak{m},z})v=0\\
     v(t,0)=\psi(t)\in\Oo^k(\hat{S}_d,\EE).
    \end{array}
    \right.,
 \end{equation}
Using Lemma \ref{lem:1} with replaced variables we conclude that $\varphi(z):=v(0,z)=\psi(z)$, so $ \varphi(z)\in\Oo^k(\hat{S}_d,\EE)$.

Now, let $u(t,z)\in\Oo(D^2,\EE)$ be a solution of the initial value problem
\begin{equation}
\label{eq:u_thm2}
    \left\{
    \begin{array}{l}
     (\partial_{\tilde{m},t}-\partial_{\mathfrak{m},z})u=0\\
     u(0,z)=\varphi(z)\in\Oo^k(\hat{S}_d,\EE)
    \end{array}
    \right.,
 \end{equation}
 where $\tilde{m}(n):=\mathfrak{m}(n)[n]_q!$. By Lemma \ref{lem:2} $u(t,0)\in\Oo^k(\hat{S}_d,\EE)$.
 
 Let us recall that $v(t,0)=\psi(t)=\sum_{n=0}^{\infty}\frac{x_n}{\mathfrak{m}(n)}t^n$.
 
 On the other hand, since $v$ is a solution of (\ref{eq:v_thm2}), we have $\partial_{\mathfrak{m},t}v(t,z)=\partial_{\mathfrak{m},z}v(t,z)$, and by the moment Taylor formula in $t$ of $v$ (see Proposition \ref{prop:taylor} and Remark \ref{re:u}), we have
 \begin{equation*}
  v(t,z)=\sum_{n=0}^{\infty}\frac{\partial^n_{\mathfrak{m},t}v(0,z)}{\mathfrak{m}(n)}t^n=\sum_{n=0}^{\infty}\frac{\partial^n_{\mathfrak{m},z}v(0,z)}{\mathfrak{m}(n)}t^n=\sum_{n=0}^{\infty}\frac{\partial^n_{\mathfrak{m},z}\varphi(z)}{\mathfrak{m}(n)}t^n.
 \end{equation*}
 Therefore we have
 \begin{equation*}
   \partial^n_{\mathfrak{m},z}\varphi(0)=x_n\quad\text{for}\quad n\in\NN_0.
 \end{equation*}
Observe that $u(t,z)=\sum_{n=0}^{\infty}\frac{\partial^n_{\mathfrak{m},z}\varphi(z)}{\mathfrak{m}(n)[n]_q!}t^n$ from Proposition \ref{prop:aux0}. Hence
\begin{equation*}
 u(t,0)=\sum_{n=0}^{\infty}\frac{\partial^n_{\mathfrak{m},z}\varphi(0)}{\mathfrak{m}(n)[n]_q!}t^n=\sum_{n=0}^{\infty}\frac{x_n}{\mathfrak{m}(n)[n]_q!}t^n\in\Oo^k(\hat{S}_d,\EE).
\end{equation*}
It means that $\sum_{n=0}^{\infty}\frac{x_n}{[n]_q!}t^n$ is $k$-summable in a direction $d$ from Remark \ref{re:general}.
\bigskip\par
To finish the proof we will show that if 
$\sum_{n=0}^{\infty}\frac{x_n}{[n]_q!}t^n\in\EE\{t\}_{k,d}$
then also $\sum_{n=0}^{\infty}x_nt^n\in\EE\{t\}_{k,d}$.

If $\hat{y}(t)=\sum_{n=0}^{\infty}\frac{x_n}{[n]_q!}t^n\in\EE\{t\}_{k,d}$ then
\begin{equation*}
\tilde{\psi}(t):=\Bo_{\mathfrak{m},t}\hat{y}(t)=\sum_{n=0}^{\infty}\frac{x_n}{\mathfrak{m}(n)[n]_q!}t^n\in\Oo^k(\hat{S}_d,\EE).
\end{equation*}
Let  $u(t,z)\in\Oo(D^2,\EE)$ be a solution of the boundary value problem
\begin{equation*}
    \left\{
    \begin{array}{l}
     (\partial_{\tilde{m},t}-\partial_{\mathfrak{m},z})u=0\\
     u(t,0)=\tilde{\psi}(t)\in\Oo^k(\hat{S}_d,\EE)
    \end{array}
    \right..
 \end{equation*}
 By Lemma \ref{lem:3} we see that $\varphi(z):=u(0,z)\in\Oo^k(\hat{S}_d,\EE)$.
 
 Now, let $v(t,z)\in\Oo(D^2,\EE)$ be a solution of the equation
\begin{equation*}
    \left\{
    \begin{array}{l}
     (\partial_{\mathfrak{m},t}-\partial_{\mathfrak{m},z})v=0\\
     v(0,z)=\varphi(z)\in\Oo^k(\hat{S}_d,\EE)
    \end{array}
    \right..
 \end{equation*}
 By Lemma \ref{lem:1} we get
 \begin{equation*}
  v(t,0)=\varphi(t)\in\Oo^k(\hat{S}_d,\EE).
 \end{equation*}
Since $u$ is a solution of (\ref{eq:u_thm2}), by the moment Taylor formula in $t$ for $u$ (see Proposition \ref{prop:taylor} and Remark \ref{re:u}), we deduce that 
\begin{equation*}
u(t,z)=\sum_{n=0}^{\infty}\frac{\partial_{\tilde{m},t}^nu(0,z)}{\tilde{m}(n)}t^n=\sum_{n=0}^{\infty}\frac{\partial_{\mathfrak{m},z}^nu(0,z)}{\tilde{m}(n)}t^n=\sum_{n=0}^{\infty}\frac{\partial_{\mathfrak{m},z}^n\varphi(z)}{\tilde{m}(n)}t^n,
\end{equation*}
so from $u(t,0)=\tilde{\psi}(t)$, we get $\partial_{\mathfrak{m},z}^n\varphi(0)=x_n$ for $n\in\mathbb{N}_0$.
Finally we conclude that
\begin{equation*}
 \sum_{n=0}^{\infty}\frac{x_n}{\mathfrak{m}(n)}t^n=\sum_{n=0}^{\infty}\frac{\partial^n_{\mathfrak{m},z}\varphi(0)}{\mathfrak{m}(n)}t^n=v(t,0)\in\Oo^k(\hat{S}_d,\EE).
\end{equation*}
It means that $\sum_{n=0}^{\infty}x_nt^n\in\EE\{t\}_{k,d}$.
\end{proof}

\begin{Rem}
We can also prove in a similar way that the sequence $([\kappa n]_q!)_{n\geq0}$ preserves summability for $\kappa\in\mathbb{N}$ and $q\in[0,1)$ in terms of a generalisation of Heine's transformation formula (see \cite[Proposition 3.2]{I-A}).
\end{Rem}

\begin{Rem}
Notice, that Theorem \ref{th:main} shows that the sequence $m=(m(n))_{n\geq0}=([n]_q!)_{n\geq0}$ belongs to the group of sequences preserving summability (see Remark \ref{re:groups}), but we do not know if this sequence is inherited from a moment function $\mathfrak{m}(u)$ of order $0$ in the sense of Definition~\ref{df:moment_general}, i.e. if $([n]_q!)_{n\geq0}$ belongs to the subgroup (\ref{eq:subgroup}).  
\end{Rem}

\section{The Cauchy problem for moment operators of order zero} \label{sec:9} 
In this section we consider the Cauchy problem for the linear equations $P(\partial_{m,t},\partial_z)u=0$ with constant coefficients, where $\partial_{m,t}$ is an operator of order $0$. We will show that if additionally a sequence $m$ preserves summability then summable solutions are characterised in the same way as for the solutions of  $P(\partial_{\mathbf{1},t},\partial_z)u=0$, which is a special case of the equation $P(\partial_{\mathfrak{m}_1,t},\partial_{\mathfrak{m}_2,z})u=0$ already studied in \cite{Mic8} under condition that $\mathfrak{m}_1(u)$ and $\mathfrak{m}_2(u)$ are moment functions of real orders. By the main result of the paper it allows us to characterise summable solutions of general linear $q$-difference-differential equations $P(D_{q,t},\partial_z)u=0$ with constant coefficients.
It gives a far greater generalisation of the results from \cite{I-A}.

We assume that $P(\lambda,\zeta)$ is a general polynomial of two variables of order $p$ with respect to $\lambda$ and $\varphi_j(z)\in\Oo(D)$ for $j=0,\dots,p-1$.

We study the relation between the solution $\hat{u}(t,z)\in\Oo(D)[[t]]$ of the Cauchy problem
\begin{equation}
\label{eq:CP_u}
    \left\{
    \begin{array}{l}
     P(\partial_{m,t},\partial_z)u=0\\
     \partial^j_{m,t}u(0,z)=\varphi_j(z),\ j=0,\dots,p-1,
    \end{array}
    \right.
 \end{equation}
and the solution $\hat{v}(t,z)\in\Oo(D)[[t]]$ of the similar initial value problem
\begin{equation}
 \label{eq:CP_tilde_u}
    \left\{
    \begin{array}{l}
     P(\partial_{\mathbf{1},t},\partial_z)v=0\\
     \partial^j_{\mathbf{1},t}v(0,z)=\varphi_j(z),\ j=0,\dots,p-1.
    \end{array}
    \right.
 \end{equation}
 
 First, let us observe that
 \begin{Prop}
  \label{prop:uv}
  A formal power series $\hat{u}(t,z)=\sum_{n=0}^{\infty}\frac{u_n(z)}{m(n)}t^n$ is a solution of (\ref{eq:CP_u}) if and only if $\hat{v}(t,z)=\sum_{n=0}^{\infty}u_n(z)t^n$ is a formal power series solution of (\ref{eq:CP_tilde_u}).
 \end{Prop}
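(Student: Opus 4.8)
The plan is to observe that the passage $\hat v\mapsto\hat u$ is precisely the $m$-Borel operator $\Bo_{m,t}$ acting in the variable $t$ with coefficients in the Banach space $\EE=\Oo(D)$, and that $\Bo_{m,t}$ intertwines the relevant operators. Indeed $\hat u(t,z)=\sum_{n=0}^{\infty}\frac{u_n(z)}{m(n)}t^n=\Bo_{m,t}\hat v(t,z)$, where $\hat v(t,z)=\sum_{n=0}^{\infty}u_n(z)t^n$; by the Remark following the definition of the $m$-Borel operator, $\Bo_{m,t}$ is a linear automorphism of $\EE[[t]]$, in particular injective, and since it merely rescales the $t$-Taylor coefficients it commutes with $\partial_z$.

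For the equation I would apply Proposition \ref{prop:commutate} with $m_1=m$ and $m_2=\mathbf{1}$; since $m\cdot\mathbf{1}=m$ this gives $\Bo_{m,t}\partial_{\mathbf{1},t}=\partial_{m,t}\Bo_{m,t}$, hence by iteration $\Bo_{m,t}\partial^j_{\mathbf{1},t}=\partial^j_{m,t}\Bo_{m,t}$ for every $j\in\NN_0$. Combined with the commutation of $\Bo_{m,t}$ with $\partial_z$, this shows $P(\partial_{m,t},\partial_z)\Bo_{m,t}\hat v=\Bo_{m,t}P(\partial_{\mathbf{1},t},\partial_z)\hat v$, because every monomial $\partial^a_{m,t}\partial^b_z$ in $P$ transforms as $\partial^a_{m,t}\partial^b_z\Bo_{m,t}=\Bo_{m,t}\partial^a_{\mathbf{1},t}\partial^b_z$. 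Therefore $P(\partial_{m,t},\partial_z)\hat u=0$ is equivalent, by injectivity of $\Bo_{m,t}$, to $P(\partial_{\mathbf{1},t},\partial_z)\hat v=0$.

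For the initial conditions I would use the moment Taylor formula (Proposition \ref{prop:taylor} together with Remark \ref{re:u}): comparing the expansion $\hat u(t,z)=\sum_{n=0}^{\infty}\frac{\partial^n_{m,t}\hat u(0,z)}{m(n)}t^n$ with $\hat u(t,z)=\sum_{n=0}^{\infty}\frac{u_n(z)}{m(n)}t^n$ gives $\partial^n_{m,t}\hat u(0,z)=u_n(z)$ for all $n$, while the same formula applied to the sequence $\mathbf{1}$ (where $\mathbf{1}(n)=1$) gives $\partial^n_{\mathbf{1},t}\hat v(0,z)=u_n(z)$. Hence $\partial^j_{m,t}\hat u(0,z)=\partial^j_{\mathbf{1},t}\hat v(0,z)=u_j(z)$ for every $j$, so the $p$ initial conditions $\partial^j_{m,t}\hat u(0,z)=\varphi_j(z)$ of (\ref{eq:CP_u}) hold if and only if the conditions $\partial^j_{\mathbf{1},t}\hat v(0,z)=\varphi_j(z)$ of (\ref{eq:CP_tilde_u}) hold. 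Putting together this observation with the previous paragraph yields the claimed equivalence.

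There is essentially no genuine obstacle here; the argument is pure bookkeeping with Borel operators. The only point requiring a little attention is verifying that $P$, as a polynomial in the two operators, commutes past $\Bo_{m,t}$, which reduces to the intertwining identity $\Bo_{m,t}\partial^j_{\mathbf{1},t}=\partial^j_{m,t}\Bo_{m,t}$ and to the fact that $\Bo_{m,t}$ acts only on the $t$-coefficients and hence commutes with $\partial_z$.
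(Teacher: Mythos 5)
Your proof is correct and follows essentially the same route as the paper: both rest on the commutation formula of Proposition \ref{prop:commutate}, which shows that the $m$-Borel transform intertwines $\partial_{\mathbf{1},t}$ with $\partial_{m,t}$ (the paper phrases it via $\Bo_{m^{-1},t}\partial_{m,t}=\partial_{\mathbf{1},t}\Bo_{m^{-1},t}$, you via the equivalent identity $\Bo_{m,t}\partial_{\mathbf{1},t}=\partial_{m,t}\Bo_{m,t}$), combined with the fact that $\Bo_{m,t}$ is an automorphism commuting with $\partial_z$. Your explicit verification of the initial conditions through the moment Taylor formula only spells out what the paper leaves implicit in ``applying the Borel transform to the Cauchy problem.''
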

\begin{proof}
 ($\Rightarrow$) Let $\hat{u}(t,z)=\sum_{n=0}^{\infty}\frac{u_n(z)}{m(n)}t^n$ be a formal solution of (\ref{eq:CP_u}). Using the commutation formula (Proposition \ref{prop:commutate}) $\Bo_{m^{-1},t}\partial_{m,t} = \partial_{\mathbf{1},t}\Bo_{m^{-1},t}$ with $m^{-1}=(m(n)^{-1})_{n\geq 0}$ and applying the Borel transform $\Bo_{m^{-1},t}$ to the Cauchy problem (\ref{eq:CP_u}) we conclude that $\hat{v}(t,z)=\Bo_{m^{-1},t}\hat{u}(t,z)$ is a formal solution of (\ref{eq:CP_tilde_u}).
\medskip\par
($\Leftarrow$) The proof is analogous. It is sufficient to apply the Borel transform $\Bo_{m,t}$ to the Cauchy problem (\ref{eq:CP_tilde_u}) and to observe that $\hat{u}(t,z)=\Bo_{m,t}\hat{v}(t,z)$.
\end{proof}

 Using the above proposition and the properties of the sequences preserving Gevrey order and summability we conclude that
\begin{Cor}
\label{cor:equivalence_gevrey}
Let $P(\lambda,\zeta)$ be a polynomial of two variables of order $p$ with respect to $\lambda$ and $s\in\RR$. We also assume that a sequence $m=(m(n))_{n\geq 0}$ preserves Gevrey order.

Then a formal power series solution $\hat{u}(t,z)\in\Oo(D)[[t]]$ of the Cauchy problem (\ref{eq:CP_u}) is of Gevrey order $s$ if and only if a power series solution $\hat{v}(t,z)=\Bo_{m^{-1},t}\hat{u}(t,z)$ of the Cauchy problem (\ref{eq:CP_tilde_u}) is of the same Gevrey order.
\end{Cor}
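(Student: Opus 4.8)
The plan is to deduce the statement directly from Proposition~\ref{prop:uv} and the defining property of sequences preserving Gevrey order (Definition~\ref{df:preserve_g}), after fixing a Banach space in which to read the Gevrey condition. First I would recall that, by Proposition~\ref{prop:uv}, a formal power series $\hat{u}(t,z)=\sum_{n=0}^{\infty}\frac{u_n(z)}{m(n)}t^n\in\Oo(D)[[t]]$ solves (\ref{eq:CP_u}) if and only if $\hat{v}(t,z)=\Bo_{m^{-1},t}\hat{u}(t,z)=\sum_{n=0}^{\infty}u_n(z)t^n$ solves (\ref{eq:CP_tilde_u}); in particular $\hat{u}=\Bo_{m,t}\hat{v}$. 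Since the operators $\Bo_{m,t}$ and $\Bo_{m^{-1},t}$ act only on the variable~$t$ and merely rescale each coefficient $u_n(z)$ by the positive constant $m(n)^{\mp 1}$, the two series have their coefficient functions holomorphic on one and the same disc in~$z$. Hence I may fix $r>0$ with $\overline{D}_r\subset D$ and set $\EE:=\Oo(D_r)\cap C(\overline{D}_r)$ equipped with the sup-norm $\|\cdot\|_r$; then $\hat{u},\hat{v}\in\EE[[t]]$, and (for a suitable such~$r$) ``$\hat{u}$ is of Gevrey order~$s$'' means exactly $\hat{u}\in\EE[[t]]_s$, and likewise for~$\hat{v}$.

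The heart of the argument is then one line: because $m$ preserves Gevrey order, Definition~\ref{df:preserve_g} applied to the series $\hat{v}\in\EE[[t]]$ gives
\begin{equation*}
 \hat{v}\in\EE[[t]]_s\quad\Longleftrightarrow\quad \Bo_{m,t}\hat{v}\in\EE[[t]]_s,
\end{equation*}
and since $\Bo_{m,t}\hat{v}=\hat{u}$, this is precisely the claimed equivalence between $\hat{u}$ being of Gevrey order~$s$ and $\hat{v}=\Bo_{m^{-1},t}\hat{u}$ being of Gevrey order~$s$.

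There is no genuine obstacle; the only point needing a moment's care is the bookkeeping about the disc in the $z$-variable and the identification of the notion of Gevrey order of an element of $\Oo(D)[[t]]$ with membership in $\EE[[t]]_s$ for $\EE=\Oo(D_r)\cap C(\overline{D}_r)$ and some $r>0$ --- but, as noted above, the $z$-domain is unaffected by $\Bo_{m^{\pm 1},t}$. If one prefers to avoid the Banach space $\EE$ entirely, one can instead argue coefficientwise in~$z$ with the estimates (\ref{eq:est_un}) and (\ref{eq:sequence_order}), reproducing the computation in the proof of Proposition~\ref{prop:preserve}: recalling that a sequence preserving Gevrey order is of order zero (Proposition~\ref{prop:preserve}), so $a^n\le m(n)\le A^n$ for some $0<a\le A<\infty$, one simply observes that $\sup_{|z|<r}|u_n(z)/m(n)|\le BC^n(n!)^s$ for all $n$ if and only if $\sup_{|z|<r}|u_n(z)|\le B(AC)^n(n!)^s$ for all $n$.
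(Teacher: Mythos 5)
Your proposal is correct and follows essentially the same route as the paper, which derives the corollary directly from Proposition~\ref{prop:uv} together with the defining property of sequences preserving Gevrey order (equivalently, applying Definition~\ref{df:preserve_g} to $\hat{v}$ and using $\Bo_{m,t}\hat{v}=\hat{u}$). Your remarks on fixing the Banach space $\Oo(D_r)\cap C(\overline{D}_r)$ and the coefficientwise alternative are just careful bookkeeping of the same argument, not a different method.
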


\begin{Cor}
\label{cor:equivalence_summability}
Let $P(\lambda,\zeta)$ be a polynomial of two variables of order $p$ with respect to $\lambda$, $k>0$ and $d\in\RR$. We also assume that a sequence $m=(m(n))_{n\geq 0}$ preserves summability.

Then a formal power series solution $\hat{u}(t,z)\in\Oo(D)[[t]]$ of the Cauchy problem (\ref{eq:CP_u}) is $k$-summable in a direction $d$ if and only if a power series solution $\hat{v}(t,z)=\Bo_{m^{-1},t}\hat{u}(t,z)$ of the Cauchy problem (\ref{eq:CP_tilde_u}) is $k$-summable in the same direction.
\end{Cor}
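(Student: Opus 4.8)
The proof will combine Proposition \ref{prop:uv} with the group structure of the sequences preserving summability, reducing the statement to the defining property of such sequences. To set things up, I would fix $r>0$ smaller than the radius of $D$ and put $\EE_r:=\Oo(D_r)\cap C(\overline{D}_r)$ with the norm $\|\cdot\|_r$; since any element of $\Oo(D)[[t]]$ has all of its coefficients in $\EE_r$, both $\hat{u}(t,z)=\sum_{n\ge0}\frac{u_n(z)}{m(n)}t^n$ and $\hat{v}(t,z)=\sum_{n\ge0}u_n(z)t^n$ may be regarded as elements of $\EE_r[[t]]$. By convention, $\hat{u}(t,z)$ (resp.\ $\hat{v}(t,z)$) is $k$-summable in the direction $d$ precisely when $\hat{u}\in\EE_r\{t\}_{k,d}$ (resp.\ $\hat{v}\in\EE_r\{t\}_{k,d}$) for some such $r$, and passing to a smaller $r$ preserves this, as the restriction maps $\EE_{r_1}\to\EE_{r_2}$ are bounded and commute with the $\Gamma_{1/k}$-Borel operator.

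The two key observations are, first, that $\hat{v}=\Bo_{m^{-1},t}\hat{u}$, because applying $\Bo_{m^{-1},t}$ to $\sum_{n\ge0}\frac{u_n(z)}{m(n)}t^n$ multiplies the $n$-th coefficient by $m(n)$; and second, that by Proposition \ref{prop:uv} the correspondence $\hat{u}\leftrightarrow\hat{v}=\Bo_{m^{-1},t}\hat{u}$ matches formal solutions of (\ref{eq:CP_u}) with formal solutions of (\ref{eq:CP_tilde_u}). Since the sequences preserving summability form a group (Remark \ref{re:groups}), the inverse sequence $m^{-1}=(m(n)^{-1})_{n\ge0}$ preserves summability as well. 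Applying the defining equivalence of ``preserves summability'' to $m$ and to $m^{-1}$ on the Banach space $\EE_r$, we obtain for every such $r$
\begin{equation*}
\hat{u}\in\EE_r\{t\}_{k,d}\ \Longleftrightarrow\ \Bo_{m^{-1},t}\hat{u}=\hat{v}\in\EE_r\{t\}_{k,d},
\end{equation*}
and together with the $r$-bookkeeping of the first paragraph this yields exactly the asserted equivalence of $k$-summability of $\hat{u}(t,z)$ and of $\hat{v}(t,z)$.

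There is really no hard step here: the content is carried entirely by Proposition \ref{prop:uv} and by the hypothesis that $m$ preserves summability. The only point that deserves a word is that the definition of a sequence preserving summability, the statement of Theorem \ref{th:main}, and the space $\EE_r=\Oo(D_r)\cap C(\overline{D}_r)$ must all be read with the Banach-space-valued notion of $k$-summability (Definition \ref{df:summable}) -- but since that definition is already formulated for an arbitrary Banach space $\EE$, nothing new is needed. (The companion Corollary \ref{cor:equivalence_gevrey} for Gevrey order is proved identically, with Proposition \ref{prop:preserve} and the sequence-of-order-zero property replacing the summability hypothesis.)
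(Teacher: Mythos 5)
Your proposal is correct and follows essentially the same route as the paper, which treats this corollary as an immediate consequence of Proposition \ref{prop:uv} (the Borel-transform correspondence of solutions) combined with the defining property of a sequence preserving summability, applied with $\EE$ the Banach space $\Oo(D_r)\cap C(\overline{D}_r)$. Your extra bookkeeping with $\EE_r$ and the use of the group property for $m^{-1}$ (equivalently, one could apply the definition for $m$ directly to $\hat{v}$) only makes explicit what the paper leaves implicit.
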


\begin{Cor}
\label{cor:equivalence_multisummability}
Let $P(\lambda,\zeta)$ be a polynomial of two variables of order $p$ with respect to $\lambda$, $\mathbf{k}=(k_1,\dots,k_N)$ with $k_1>\cdots>k_N>0$ and $\mathbf{d}=(d_1,\dots,d_N)\in\RR^N$ be an
 admissible multidirection. We also assume that a sequence $m=(m(n))_{n\geq 0}$ preserves summability.

Then a formal power series solution $\hat{u}(t,z)\in\Oo(D)[[t]]$ of the Cauchy problem (\ref{eq:CP_u}) is $\mathbf{k}$-multisummable in a multidirection $\mathbf{d}$ if and only if a power series solution $\hat{v}(t,z)=\Bo_{m^{-1},t}\hat{u}(t,z)$ of the Cauchy problem (\ref{eq:CP_tilde_u}) is $\mathbf{k}$-multisummable in the same multidirection.
\end{Cor}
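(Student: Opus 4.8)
The plan is to deduce Corollary~\ref{cor:equivalence_multisummability} from Proposition~\ref{prop:uv} together with the group and multisummability properties of sequences preserving summability, in exactly the same way that Corollaries~\ref{cor:equivalence_gevrey} and~\ref{cor:equivalence_summability} are obtained. First I would record that, by Proposition~\ref{prop:uv}, writing $\hat{u}(t,z)=\sum_{n\geq0}\frac{u_n(z)}{m(n)}t^n$, the series $\hat{u}$ solves~(\ref{eq:CP_u}) if and only if $\hat{v}(t,z)=\sum_{n\geq0}u_n(z)t^n=\Bo_{m^{-1},t}\hat{u}(t,z)$ solves~(\ref{eq:CP_tilde_u}); thus the two formal solutions are linked by the mutually inverse $m$- and $m^{-1}$-Borel operators, $\hat{v}=\Bo_{m^{-1},t}\hat{u}$ and $\hat{u}=\Bo_{m,t}\hat{v}$, and the only thing left to check is that $\Bo_{m^{-1},t}$ preserves $\mathbf{k}$-multisummability in the multidirection $\mathbf{d}$.

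Next I would invoke the Remark immediately following the definition of a sequence preserving summability, which says that such a sequence also preserves multisummability, and Remark~\ref{re:groups}, by which the inverse sequence $m^{-1}=(m(n)^{-1})_{n\geq0}$ again preserves summability, hence multisummability. Applying the latter to the series $\hat{u}$ --- regarded, after shrinking $D$ if necessary, as an element of $\EE[[t]]$ for a suitable Banach space $\EE$ of holomorphic functions of $z$ (for instance $\EE=\Oo_{1/\kappa}(D_r)\cap C(\overline{D}_r)$), so that Definition~\ref{df:multisummable} literally applies --- one obtains at once
\[
\hat{u}\in\EE\{t\}_{\mathbf{k},\mathbf{d}}\quad\Longleftrightarrow\quad\Bo_{m^{-1},t}\hat{u}=\hat{v}\in\EE\{t\}_{\mathbf{k},\mathbf{d}},
\]
which is precisely the asserted equivalence.

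Since all the ingredients are already in place, I do not expect a genuine obstacle here; the argument is essentially bookkeeping. The only mild subtlety is the usual passage to a Banach-space setting for the $z$-variable, together with the observation that a decomposition $\hat{v}=\hat{v}_1+\dots+\hat{v}_N$ witnessing $\mathbf{k}$-multisummability of $\hat{v}$ is transported by $\Bo_{m,t}$ to a decomposition $\hat{u}=\Bo_{m,t}\hat{v}_1+\dots+\Bo_{m,t}\hat{v}_N$ in which each $\Bo_{m,t}\hat{v}_j$ is $k_j$-summable in the direction $d_j$ --- but this is nothing more than the statement that $m$ preserves multisummability, which has already been established, so no additional work is required.
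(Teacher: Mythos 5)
Your argument is correct and coincides with the paper's (the paper derives this corollary, without a separate written proof, exactly from Proposition \ref{prop:uv} together with the remark that a sequence preserving summability also preserves multisummability, in the Banach-space setting $\EE=\Oo_{1/\kappa}(D_r)\cap C(\overline{D}_r)$). Note only that the appeal to Remark \ref{re:groups} for $m^{-1}$ is not even needed, since the defining equivalence applied to $\hat{v}$ with $\Bo_{m,t}\hat{v}=\hat{u}$ already gives both directions.
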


In the case $m=([n]_q!)_{n\geq 0}$ for $q\in[0,1)$ 
we can rewrite (\ref{eq:CP_u}) as the Cauchy problem for the general homogeneous linear $q$-difference-differential equation with constant coefficients
\begin{equation}
\label{eq:CP_u_q}
    \left\{
    \begin{array}{l}
     P(D_{q,t},\partial_z)u=0\\
     D^j_{q,t}u(0,z)=\varphi_j(z)\in\Oo(D),\ j=0,\dots,p-1,
    \end{array}
    \right.
 \end{equation}
 In this special case by Theorem \ref{th:main} we may formulate Proposition \ref{prop:uv} and Corollaries \ref{cor:equivalence_gevrey}, \ref{cor:equivalence_summability} and \ref{cor:equivalence_multisummability} as
\begin{Cor}
 Let $P(\lambda,\zeta)$ be a polynomial of two variables of order $p$ with respect to $\lambda$ and $q\in[0,1)$.  We also assume that $\hat{u}(t,z)=\sum_{n=0}^{\infty}\frac{u_n(z)}{[n]_q!}t^n$ and $\hat{v}(t,z)=\sum_{n=0}^{\infty}u_n(z)t^n$ are formal power series belonging to the space $\Oo(D)[[t]]$. Then the following equivalences hold:
 \begin{enumerate}
  \item $\hat{u}(t,z)$ is a formal power series solution of (\ref{eq:CP_u_q}) if and only if $\hat{v}(t,z)$ is a formal power series solution of (\ref{eq:CP_tilde_u}).
  \item Fix $s\in\RR$. $\hat{u}(t,z)$ is a formal power series solution of (\ref{eq:CP_u_q}) of Gevrey order $s$ if and only if $\hat{v}(t,z)$ is a formal power series solution of (\ref{eq:CP_tilde_u}) of the same Gevrey order $s$.
  \item Fix $k>0$ and $d\in\RR$. $\hat{u}(t,z)$ is a formal power series solution of (\ref{eq:CP_u_q}) that is $k$-summable in a direction $d$ if and only if $\hat{v}(t,z)$ is a formal power series solution of (\ref{eq:CP_tilde_u}) that is $k$-summable in the same direction.
  \item Fix $\mathbf{k}=(k_1,\dots,k_N)$ with $k_1>\cdots>k_N>0$ and an admissible multidirection $\mathbf{d}=(d_1,\dots,d_N)\in\RR^N$. $\hat{u}(t,z)$ is a formal power series solution of (\ref{eq:CP_u_q}) that is  $\mathbf{k}$-multisummable in a multidirection $\mathbf{d}$ if and only if $\hat{v}(t,z)$ is a formal power series solution of (\ref{eq:CP_tilde_u}) that is $\mathbf{k}$-multisummable in the same multidirection.
 \end{enumerate}

\end{Cor}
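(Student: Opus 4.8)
The plan is to obtain each of the four equivalences simply by specialising the general results of Sections \ref{sec:7+1}--\ref{sec:9} to the sequence $m=([n]_q!)_{n\geq 0}$; once the right identifications are made, no genuinely new argument is needed.

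First I would recall from the Example in Section \ref{sec:7} that for $m=([n]_q!)_{n\geq 0}$ one has $D_{q,t}t^n=[n]_qt^{n-1}$, so that the $m$-moment differentiation $\partial_{m,t}$ coincides with the $q$-difference operator $D_{q,t}$. Consequently the Cauchy problem (\ref{eq:CP_u}) is literally the problem (\ref{eq:CP_u_q}), and the passage between the two series is exactly $\hat{v}=\Bo_{m^{-1},t}\hat{u}$, since $\Bo_{m^{-1},t}\bigl(\sum_{n=0}^{\infty}\frac{u_n(z)}{[n]_q!}t^n\bigr)=\sum_{n=0}^{\infty}u_n(z)t^n$. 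With this identification, part (1) of the statement is precisely Proposition \ref{prop:uv} applied with $m=([n]_q!)_{n\geq 0}$.

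For part (2) I would invoke the two-sided estimate (\ref{eq:est_nq!}), which shows that $([n]_q!)_{n\geq 0}$ is a sequence of order $0$; hence by Proposition \ref{prop:preserve} it preserves Gevrey order, and Corollary \ref{cor:equivalence_gevrey} gives the equivalence for Gevrey order $s$. For part (3) I would use the main theorem, Theorem \ref{th:main}, which asserts that $([n]_q!)_{n\geq 0}$ preserves summability, so that Corollary \ref{cor:equivalence_summability} yields the equivalence of $k$-summability in a direction $d$. Finally, for part (4), preservation of summability entails preservation of multisummability (a $\mathbf{k}$-multisummable series is a finite sum of $k_j$-summable ones and $\Bo_{m,t}$ is linear), so Corollary \ref{cor:equivalence_multisummability} gives the equivalence of $\mathbf{k}$-multisummability in an admissible multidirection $\mathbf{d}$.

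I do not expect any real obstacle: all the substance has already been placed in Theorem \ref{th:main} (for the summability and multisummability parts) and in the elementary bound (\ref{eq:est_nq!}) together with Proposition \ref{prop:preserve} (for the Gevrey part). The only point requiring a remark is the coincidence $\partial_{([n]_q!),t}=D_{q,t}$, which has already been verified in Section \ref{sec:7}; thus the proof reduces to citing Proposition \ref{prop:uv} and Corollaries \ref{cor:equivalence_gevrey}, \ref{cor:equivalence_summability} and \ref{cor:equivalence_multisummability} with $m=([n]_q!)_{n\geq 0}$.
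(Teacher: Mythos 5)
Your proposal is correct and follows exactly the paper's own route: identifying $\partial_{([n]_q!),t}=D_{q,t}$ so that (\ref{eq:CP_u}) becomes (\ref{eq:CP_u_q}), then citing Proposition \ref{prop:uv} for part (1), the bound (\ref{eq:est_nq!}) with Proposition \ref{prop:preserve} and Corollary \ref{cor:equivalence_gevrey} for part (2), and Theorem \ref{th:main} with Corollaries \ref{cor:equivalence_summability} and \ref{cor:equivalence_multisummability} for parts (3) and (4). No gaps; this is precisely how the paper derives the corollary.
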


\bigskip\par
Let $\lambda(\zeta)$ be an algebraic function on $\CC$. It means that there exists a polynomial $P(\lambda,\zeta)$ of two complex variables such that the function $\lambda(\zeta)$ satisfies equation $P(\lambda(\zeta),\zeta)=0$. By the implicit function theorem the function $\lambda(\zeta)$ is holomorphic on $\CC$ except at a finite number of singular or branching points. Moreover this function has a moderate growth at infinity. More precisely there exist a \emph{pole order at infinity}
$\tilde{q}\in\QQ$ and a \emph{leading term} $\lambda\in\CC^*$
such that 
$$\lim_{\zeta\to\infty}\frac{\lambda(\zeta)}{\zeta^{\tilde{q}}}=\lambda.$$
We denote it shortly by $\lambda(\zeta)\sim\lambda\zeta^{\tilde{q}}$.

Hence there exists $r_0<\infty$ and $\kappa\in\NN$ such that $\lambda(\zeta)$ is a holomorphic function of the variable $\xi=\zeta^{1/\kappa}$ for $|\zeta|>r_0$ with a pole at infinity. It means that the function
$\xi\mapsto \lambda(\xi^{\kappa})$ has the Laurent series expansion $\lambda(\xi^{\kappa})=\sum_{j=-n}^{\infty}\frac{a_j}{\xi^j}$ at infinity for some coefficients $a_j\in\CC$ with $a_{-n}=\lambda$ and $n=\tilde{q}\kappa\in\ZZ$. This expansion is convergent for $|\xi|>r_0^{1/\kappa}$ with a pole of order $n$ at infinity.

For such functions we may define the following pseudodifferential operators

\begin{Def}[see {\cite[Definition 13]{Mic8}}]
Let $\lambda(\zeta)$ be a holomorphic function of the variable $\xi=\zeta^{1/\kappa}$ for $|\zeta|\geq r_0$ (for some $\kappa\in\NN$ and $r_0>0$) and of moderate growth at infinity. A~\emph{moment pseudodifferential operator} $\lambda(\partial_{z})\colon \Oo_{1/\kappa}(D)\to\Oo_{1/\kappa}(D)$ is defined by
\begin{gather}
\label{eq:lambda}
\lambda(\partial_{z})\varphi(z):=\frac{1}{2\kappa\pi i}\oint^{\kappa}_{|w|=\varepsilon}\varphi(w)\int_{e^{i\theta}r_0}^{e^{i\theta}\infty}\lambda(\zeta)\mathbf{E}_{1/\kappa}(z^{1/\kappa}\zeta^{1/\kappa})e^{-(\zeta w)}\,d\zeta dw
\end{gather}
for every $\varphi(z)\in\Oo_{1/\kappa}(D_r)$ and $|z|<\varepsilon<r$, where $\theta\in(-\arg w -\frac{\pi}{2}, -\arg w +\frac{\pi}{p})$, $\mathbf{E}_{1/\kappa}(z):=\sum_{n=0}^{\infty}\frac{z^n}{\Gamma(1+n/\kappa)}$ is the Mittag-Leffler function of index $1/\kappa$ and $\oint^{\kappa}_{|w|=\varepsilon}$ means that we integrate $\kappa$ times along the positively oriented circle of radius $\varepsilon$.
\end{Def}

\begin{Rem}
  The right-hand side of (\ref{eq:lambda}) does not depend on the choice of the number $r_0$ such that $\lambda(\zeta)$ is holomorphic for $|\zeta|\geq r_0$ (see \cite[Proposition 2]{Mic10}). The value of $\lambda(\partial_z)\varphi(z)$ depends only on $\varphi(z)$ and on the behaviour of the algebraic function $\lambda(\zeta)$ at a neighbourhood of infinity.
\end{Rem}

If $P(\lambda,\zeta)$ is a general polynomial of two variables of order $p$ with respect to $\lambda$ given in (\ref{eq:CP_u}) and (\ref{eq:CP_tilde_u}), then we may write it as
\begin{equation*}
P(\lambda,\zeta)=P_0(\zeta)\lambda^p-\sum_{j=1}^p P_j(\zeta)\lambda^{p-j}
\end{equation*} 
for some polynomials $P_0(\zeta),\dots,P_p(\zeta)$ of one variable.

If $P_0(\zeta)\neq\textrm{const.}$ then a formal solution of (\ref{eq:CP_u}) (and of (\ref{eq:CP_tilde_u})) is not uniquely determined. To avoid this inconvenience we choose some special solution which is called the \emph{normalized formal solution} (see \cite{B5} and \cite{Mic5}). To this end we factorise the polynomial $P(\lambda,\zeta)$
as
 \begin{equation*}
P(\lambda,\zeta)=P_0(\zeta)\prod_{j=1}^n\prod_{l=1}^{p_j}(\lambda-\lambda_{jl}(\zeta))^{p_{jl}}=:P_0(\zeta)\tilde{P}(\lambda,\zeta),
 \end{equation*} 
where $P_0(\zeta)\sim a_0\zeta^{q_0}$ for some $a_0\in\CC\setminus\{0\}$ and $q_0\in\NN_0$,
$\sum_{j=1}^n\sum_{l=1}^{p_j}p_{jl}=p$ and $\lambda_{jl}(\zeta)$ are the roots of the characteristic equation
   $P(\lambda,\zeta)=0$ satisfying $\lambda_{jl}(\zeta)\sim \lambda_{jl}\zeta^{q_j}$ for some $\lambda_{jl}\in\CC^*$ and $q_j\in\QQ$, i.e. $q_j=\mu_j/\nu_j$ for some relatively prime $\mu_j\in\ZZ$ and $\nu_j\in\NN$.

Since $\lambda_{jl}(\partial_z)$ are defined by (\ref{eq:lambda}), also the moment pseudodifferential operator
\begin{equation*}
\tilde{P}(\partial_{m,t},\partial_z)=\prod_{j=1}^n\prod_{l=1}^{p_j}(\partial_{m,t}-\lambda_{jl}(\partial_z))^{p_{jl}}
\end{equation*}
is well defined.

Now we are ready to define the uniquely determined normalized solution of (\ref{eq:CP_u}) (resp. of (\ref{eq:CP_tilde_u})).

\begin{Def}
 A formal solution $\hat{u}$ of (\ref{eq:CP_u}) (resp. $\hat{v}$ of (\ref{eq:CP_tilde_u})) is called the \emph{normalized formal solution} if $\hat{u}$ (resp. $\hat{v}$) is also a solution of the pseudodifferential equation $\tilde{P}(\partial_{m,t},\partial_z)u=0$ (resp. $\tilde{P}(\partial_{\mathbf{1},t},\partial_z)v=0$).
\end{Def}

\begin{Thm}
  \label{th:gevrey}
  Let $\hat{u}$ be a normalised formal solution of (\ref{eq:CP_u}) and $m=(m(n))_{n\geq 0}$ be a sequence preserving Gevrey order.
  
   Then 
   \begin{equation}
   \label{eq:decomposition}
   \hat{u}=\sum_{j=1}^n\sum_{l=1}^{p_j}\sum_{\alpha=1}^{p_{jl}}\hat{u}_{jl\alpha}
   \end{equation}
   with
   $\hat{u}_{jl\alpha}$ being a formal solution of a simple pseudodifferential equation
   \begin{equation}
   \label{eq:u_jl}
    \left\{
    \begin{array}{l}
     (\partial_{m,t}-\lambda_{jl}(\partial_{z}))^{\alpha}u_{jl\alpha}=0\\
     \partial_{m,t}^{\beta} u_{jl\alpha}(0,z)=0\ \  (\beta=0,\dots,\alpha-2)\\
     \partial_{m,t}^{\alpha-1} u_{jl\alpha}(0,z)=\lambda_{jl}^{\alpha-1}(\partial_{z})\varphi_{jl\alpha}(z)\in\Oo_{1/\kappa}(D),
    \end{array}
    \right.
   \end{equation}
   where $\varphi_{jl\alpha}(z):=\sum_{\beta=0}^{p-1}d_{jl\alpha\beta}(\partial_{z})
   \varphi_{\beta}(z)$ and $d_{jl\alpha\beta}(\zeta)$ are some holomorphic
   functions of the variable $\xi=\zeta^{1/\kappa}$ and of moderate growth.
   \par
   Moreover, a formal solution $\hat{u}_{jl\alpha}\in\Oo_{1/\kappa}(D)[[t]]$ is a Gevrey series of order $\max\{q_j,0\}$ with respect to $t$.
   \par
   If additionally the initial data $\varphi_{\beta}(z)\in\Oo^{k}(\CC)$ for $\beta=0,\dots,p-1$ then a formal solution $\hat{u}_{jl\alpha}\in\Oo_{1/\kappa}(D)[[t]]$ is a Gevrey series of order $\max\{q_j,0\}(1-1/k)$ with respect to $t$. 
\end{Thm}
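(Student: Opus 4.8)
The plan is to reduce everything to the known structure theory for moment-difference equations of the form $P(\partial_{\mathbf{1},t},\partial_z)v=0$ established in \cite{Mic8}, and then transport it back through the Borel automorphism $\Bo_{m,t}$ using the fact that $m$ preserves Gevrey order (Proposition \ref{prop:preserve} and the group structure in Remark \ref{re:groups}). First I would pass from $\hat{u}$ to $\hat{v}=\Bo_{m^{-1},t}\hat{u}$, which by Proposition \ref{prop:uv} is the corresponding normalised formal solution of \eqref{eq:CP_tilde_u}; since $m$ commutes with $\partial_{m,t}$ in the sense of Proposition \ref{prop:commutate} ($\Bo_{m^{-1},t}\partial_{m,t}=\partial_{\mathbf{1},t}\Bo_{m^{-1},t}$), the same Borel transform intertwines each simple factor $(\partial_{m,t}-\lambda_{jl}(\partial_z))^{\alpha}$ with $(\partial_{\mathbf{1},t}-\lambda_{jl}(\partial_z))^{\alpha}$, because the pseudodifferential operators $\lambda_{jl}(\partial_z)$ act only in $z$ and hence commute with $\Bo_{m,t}$. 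Therefore the decomposition \eqref{eq:decomposition} for $\hat{u}$ is exactly the $\Bo_{m,t}$-image of the analogous decomposition for $\hat{v}$, which is precisely the content of the corresponding theorem in \cite{Mic8}. The coefficients $d_{jl\alpha\beta}(\zeta)$ and the auxiliary data $\varphi_{jl\alpha}(z)$ are unchanged by this passage, since the splitting into simple equations is obtained from the partial-fraction decomposition of $1/\tilde P(\lambda,\zeta)$ in the $\lambda$-variable and the imposed Cauchy data, none of which involve the sequence $m$.

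Next I would establish the Gevrey estimate. For $\hat{v}$, the statement that $\hat{v}_{jl\alpha}\in\Oo_{1/\kappa}(D)[[t]]$ is Gevrey of order $\max\{q_j,0\}$ with respect to $t$ (and of order $\max\{q_j,0\}(1-1/k)$ when the data extend to $\Oo^k(\CC)$) is the known result of \cite[Theorem 13 or its analogue]{Mic8} applied to the pair of trivial moment functions $\mathfrak{m}_1=\mathfrak{m}_2=\mathbf{1}$ of order $0$ — note that the orders $q_j$ of the characteristic roots are intrinsic to $P$ and do not change when we replace $\partial_t$ by $\partial_{\mathbf{1},t}$. Having this for $\hat{v}_{jl\alpha}$, I would invoke Corollary \ref{cor:equivalence_gevrey}, or directly Proposition \ref{prop:preserve}: since $m$ preserves Gevrey order, $\hat{u}_{jl\alpha}=\Bo_{m,t}\hat{v}_{jl\alpha}$ lies in $\Oo_{1/\kappa}(D)[[t]]_s$ if and only if $\hat{v}_{jl\alpha}$ does, for every $s\in\RR$; taking $s=\max\{q_j,0\}$ (resp. $s=\max\{q_j,0\}(1-1/k)$) gives the desired conclusion for $\hat{u}_{jl\alpha}$.

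The main obstacle — really the only delicate point — is the first paragraph: one must verify carefully that the Borel operator $\Bo_{m,t}$ genuinely intertwines the whole normalisation procedure, i.e. that it maps normalised solutions of \eqref{eq:CP_u} to normalised solutions of \eqref{eq:CP_tilde_u} and conversely, and that it commutes with each moment pseudodifferential operator $\lambda_{jl}(\partial_z)$. The commutation with $\lambda_{jl}(\partial_z)$ is immediate because $\Bo_{m,t}$ acts coefficient-wise in $t$ while $\lambda_{jl}(\partial_z)$ acts in $z$, but one should check that this is compatible with convergence: since each $\hat{v}_{jl\alpha}$ is already known to be a Gevrey series (hence $\Bo_{m,t}\hat{v}_{jl\alpha}$ is again a well-defined formal power series with holomorphic coefficients in $z$), there is no issue. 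The remaining content — the explicit form of $\varphi_{jl\alpha}$, the $d_{jl\alpha\beta}(\zeta)$, and the Gevrey orders $\max\{q_j,0\}$ and $\max\{q_j,0\}(1-1/k)$ — is quoted verbatim from \cite{Mic8} and requires no new argument here. Thus the proof is essentially a transfer argument, with all the analytic work already done in \cite{Mic8} and the transfer itself powered by Proposition \ref{prop:preserve} and Corollary \ref{cor:equivalence_gevrey}.
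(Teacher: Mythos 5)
Your proposal follows essentially the same route as the paper: reduce via Corollary \ref{cor:equivalence_gevrey} (i.e. the Borel automorphism $\Bo_{m,t}$, which acts coefficient-wise in $t$ and so commutes with every operator acting only in $z$) to the case $m=\mathbf{1}$, quote the structure theorem of \cite{Mic8} for the equation $P(\partial_{\mathbf{1},t},\partial_z)v=0$, and transfer the decomposition and the Gevrey orders back using that $m$ preserves Gevrey order. One concrete correction to how you invoke \cite{Mic8}: the right parameters are $\mathfrak{m}_1(u)\equiv 1$ (order $s_1=0$) together with $\mathfrak{m}_2(u)=\Gamma(1+u)$ (order $s_2=1$), not ``$\mathfrak{m}_1=\mathfrak{m}_2=\mathbf{1}$ of order $0$'': the differentiation in $z$ in \eqref{eq:CP_tilde_u} is the ordinary $\partial_z$, whose moment function is $\Gamma(1+u)$, and with both moment functions of order $0$ the cited theorem would produce Gevrey order $0$ rather than $\max\{q_j,0\}$. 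Finally, the refined order $\max\{q_j,0\}(1-1/k)$ is not a separate statement to be quoted verbatim but comes from applying the same theorem of \cite{Mic8} with $s=-1/k$, using that $\varphi_\beta\in\Oo^k(\CC)$ implies $\hat{\varphi}_\beta\in\CC[[z]]_{-1/k}$; this is the only extra observation the paper's proof makes beyond the transfer argument you describe.
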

\begin{proof}
 Since $m$ is a sequence preserving Gevrey order,
 by Corollary \ref{cor:equivalence_gevrey} 
 it is sufficient to prove the statement for the sequence $m=\mathbf{1}=(1)_{n\in\NN}$, which is inherited from the moment function $\mathbf{1}(u)\equiv 1$ for $u\geq 0$.
 Applying \cite[Theorem 1]{Mic8} with $\mathfrak{m}_1(u)\equiv 1$, $\mathfrak{m}_2(u)=\Gamma(1+u)$, $s_1=0$, $s_2=1$ and $s=0$ we get the decomposition (\ref{eq:decomposition}) with $\hat{u}_{jl\alpha}$ satisfying (\ref{eq:u_jl}) and being a Gevrey series of order $\max\{q_j,0\}$. If additionally $\varphi_{\beta}(z)\in\Oo^k(\CC)$ then $\hat{\varphi}_{\beta}(z)\in\CC[[z]]_{-1/k}$ for $\beta=0,\dots,p-1$. In this case we also apply \cite[Theorem 1]{Mic8} but with $s=-1/k$, and we conclude that $\hat{u}_{jl\alpha}$ is a Gevrey series of order $\max\{q_j,0\}(1-1/k)$.
\end{proof}
\bigskip\par
To show the result for summable and multisummable solutions, 
additionally we may assume that $q_1>q_2>\dots>q_n$ and
\begin{equation}
\label{eq:tilde_n}
\tilde{n}:=
\left\{
   \begin{array}{lll}
 0&\textrm{for}& q_1\leq 0\\
\max\{i\colon q_i>0\}&\textrm{for}& q_1>0.
\end{array}
\right.
\end{equation}

First observe that if $\tilde{n}=0$ and the sequence $m=(m(n))_{n\geq 0}$ preserves Gevrey order, then by Theorem \ref{th:gevrey}, the normalized formal solution $\hat{u}$ of (\ref{eq:CP_u}) is convergent.

Now, let us assume that $\tilde{n}=1$. In this case we will study summable solutions of (\ref{eq:CP_u}). Namely, we have
\begin{Thm}
  \label{th:summability}
  Under the above conditions, we assume that $\tilde{n}=1$, $d\in\RR$ and the sequence $m=(m(n))_{n\geq 0}$ preserves summability. 
  
  If $\varphi_j(z)\in\Oo^1(\hat{S}_{(d+\arg\lambda_{1l}+2n\pi)/q_1})$ for $j=0,\dots,p-1$, $l=1,\dots,p_{1}$ and $n=0,\dots,\mu_1-1$ then a normalised formal solution $\hat{u}$ of (\ref{eq:CP_u}) is $1/q_1$-summable in a direction $d$.
  
  In the opposite side, let us assume additionally that the initial data in (\ref{eq:CP_u}) satisfy $\varphi_0(z)=\dots=\varphi_{p-2}(z)=0$ and $\varphi_{p-1}(z)=\varphi(z)\in\Oo(D)$. If a normalized formal solution $\hat{u}$ of (\ref{eq:CP_u}) is $1/q_1$-summable in a direction $d$
  then $\varphi(z)\in\Oo^1(\hat{S}_{(d+\arg\lambda_{1l}+2n\pi)/q_1)})$ for $l=1,\dots,p_1$ and $n=0,\dots,\mu_1-1$.
\end{Thm}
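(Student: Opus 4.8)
The plan is to push the whole problem, via Corollary~\ref{cor:equivalence_summability}, onto the moment differential equation $P(\partial_{\mathbf{1},t},\partial_z)v=0$ and then to read off the statement from the summability theory for moment differential equations developed in \cite{Mic7,Mic8}. Since $m$ preserves summability, Corollary~\ref{cor:equivalence_summability} shows that the normalised formal solution $\hat{u}$ of (\ref{eq:CP_u}) is $1/q_1$-summable in the direction $d$ if and only if $\hat{v}:=\Bo_{m^{-1},t}\hat{u}$ is $1/q_1$-summable in $d$. First I would check that $\Bo_{m^{-1},t}$ carries the \emph{normalised} solution of (\ref{eq:CP_u}) to the \emph{normalised} solution of (\ref{eq:CP_tilde_u}): by Proposition~\ref{prop:uv} it sends solutions of (\ref{eq:CP_u}) to solutions of (\ref{eq:CP_tilde_u}), and iterating Proposition~\ref{prop:commutate}, together with the fact that $\Bo_{m^{-1},t}$ acts only in $t$ and hence commutes with every moment pseudodifferential operator $\lambda_{jl}(\partial_z)$, gives $\Bo_{m^{-1},t}\,\tilde{P}(\partial_{m,t},\partial_z)=\tilde{P}(\partial_{\mathbf{1},t},\partial_z)\,\Bo_{m^{-1},t}$, so it also maps solutions of $\tilde{P}(\partial_{m,t},\partial_z)u=0$ to solutions of $\tilde{P}(\partial_{\mathbf{1},t},\partial_z)v=0$. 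Thus it suffices to prove the theorem for $m=\mathbf{1}$, the sequence of moments of the moment function $\mathbf{1}(u)\equiv1$ of order $0$.

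For the sufficiency, I would note that for $m=\mathbf{1}$ the equation $P(\partial_{\mathbf{1},t},\partial_z)v=0$ is the case $\mathfrak{m}_1(u)\equiv1$, $\mathfrak{m}_2(u)=\Gamma(1+u)$ (orders $s_1=0$, $s_2=1$) of the moment equation studied in \cite{Mic8}, and I would invoke the summability result there. The mechanism is the decomposition of Theorem~\ref{th:gevrey}: $\hat{v}=\sum_{j,l,\alpha}\hat{v}_{jl\alpha}$, with $\hat{v}_{jl\alpha}$ solving the simple problem (\ref{eq:u_jl}) (for $m=\mathbf{1}$) and Gevrey of order $\max\{q_j,0\}$ in $t$; since $\tilde{n}=1$, all terms with $j\ge2$ are convergent, hence $1/q_1$-summable in every direction, so $\hat{v}$ is $1/q_1$-summable in $d$ once each $\hat{v}_{1l\alpha}$ is. For the simple equation $(\partial_{\mathbf{1},t}-\lambda_{1l}(\partial_z))^{\alpha}v_{1l\alpha}=0$ with $\lambda_{1l}(\zeta)\sim\lambda_{1l}\zeta^{q_1}$, $q_1=\mu_1/\nu_1>0$, the cited result states that $\hat{v}_{1l\alpha}$ is $1/q_1$-summable in $d$ exactly when its datum $\lambda_{1l}^{\alpha-1}(\partial_z)\varphi_{1l\alpha}(z)$, equivalently $\varphi_{1l\alpha}(z)=\sum_{\beta=0}^{p-1}d_{1l\alpha\beta}(\partial_z)\varphi_\beta(z)$, extends holomorphically with exponential growth of order $1$ on disc-sectors in the directions $(d+\arg\lambda_{1l}+2n\pi)/q_1$, $n=0,\dots,\mu_1-1$. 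Since each $\lambda_{1l}(\partial_z)$ and each $d_{1l\alpha\beta}(\partial_z)$ is a moment pseudodifferential operator attached to an algebraic function of moderate growth, it preserves the spaces $\Oo^1(\hat{S}_\theta)$, so the hypothesis $\varphi_j\in\Oo^1(\hat{S}_{(d+\arg\lambda_{1l}+2n\pi)/q_1})$ for all $j,l,n$ yields the required growth of every $\lambda_{1l}^{\alpha-1}(\partial_z)\varphi_{1l\alpha}(z)$, and sufficiency follows.

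For the converse, I would use that now $\varphi_0=\dots=\varphi_{p-2}=0$ and $\varphi_{p-1}=\varphi$, so $\varphi_{1l\alpha}=d_{1l\alpha,p-1}(\partial_z)\varphi$. If $\hat{u}$, hence $\hat{v}$, is $1/q_1$-summable in $d$, then by the previous paragraph $\lambda_{1l}^{\alpha-1}(\partial_z)d_{1l\alpha,p-1}(\partial_z)\varphi$ has the required analytic continuation and growth in the directions $(d+\arg\lambda_{1l}+2n\pi)/q_1$. As in \cite{I-A} and \cite{Mic8}, for this homogeneous data the combination of the operators $d_{1l\alpha,p-1}(\partial_z)$ in the $(1,l)$-block reduces, up to an invertible moment pseudodifferential operator (a confluent Vandermonde determinant in the leading coefficients $\lambda_{1,1},\dots,\lambda_{1,p_1}$), to the identity applied to $\varphi$; inverting it gives $\varphi\in\Oo^1(\hat{S}_{(d+\arg\lambda_{1l}+2n\pi)/q_1})$ for each $l=1,\dots,p_1$ and $n=0,\dots,\mu_1-1$. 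This is precisely the converse direction of the corresponding theorem of \cite{Mic8}, which I would quote.

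The main obstacle is not the reduction step (routine once Corollary~\ref{cor:equivalence_summability} is available) but the bookkeeping inside the $m=\mathbf{1}$ case: correctly matching the set of summability directions $(d+\arg\lambda_{1l}+2n\pi)/q_1$, $n=0,\dots,\mu_1-1$ — which arises because passing from $t$-summability of order $1/q_1$ to analytic continuation of $\varphi$ involves the $\mu_1$-valued map $z\mapsto z^{q_1}$ — and, in the necessity part, inverting the moment pseudodifferential operators $d_{1l\alpha,p-1}(\partial_z)$ to recover $\varphi$ from the summability of $\hat{u}$. Both points are already settled in \cite{Mic8}, so in the end the proof amounts to transporting that machinery through Corollary~\ref{cor:equivalence_summability}.
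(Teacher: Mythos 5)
Your reduction to $m=\mathbf{1}$ via Corollary \ref{cor:equivalence_summability} (including the check that $\Bo_{m^{-1},t}$ commutes with $\tilde{P}$, hence carries the normalised solution of (\ref{eq:CP_u}) to the normalised solution of (\ref{eq:CP_tilde_u})) and your sufficiency argument are essentially the paper's proof: decompose $\hat{v}$ by Theorem \ref{th:gevrey}, note that the terms with $j\geq 2$ are convergent because $\tilde{n}=1$, and apply the summability result of \cite{Mic8} (with $\mathfrak{m}_1\equiv 1$, $\mathfrak{m}_2=\Gamma(1+u)$) to each $\hat{v}_{1l\alpha}$, the hypotheses on $\varphi_j$ passing to $\varphi_{1l\alpha}$ through the moment pseudodifferential operators $d_{1l\alpha\beta}(\partial_z)$.

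The converse, however, as you sketch it has a genuine gap. You argue: ``$\hat{v}$ is $1/q_1$-summable, hence by the previous paragraph each datum $\lambda_{1l}^{\alpha-1}(\partial_z)d_{1l\alpha,p-1}(\partial_z)\varphi$ has the required growth.'' But the previous paragraph gives an equivalence for a \emph{single} component $\hat{v}_{1l\alpha}$; summability of the sum $\hat{v}=\sum_{j,l,\alpha}\hat{v}_{jl\alpha}$ does not by itself yield summability of the individual components (this is exactly the failure the paper flags before Theorem \ref{th:multisummability} and the reason for the ``with respect to the decomposition'' notion there; with several roots $\lambda_{1l}$ at the same level $q_1$, non-summable pieces could a priori cancel in the sum). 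Your subsequent ``confluent Vandermonde'' inversion of the operators $d_{1l\alpha,p-1}(\partial_z)$ is also unjustified: invertibility of a moment pseudodifferential operator on the spaces $\Oo^1(\hat{S}_\theta)$ requires control of the reciprocal symbol and is not automatic. The paper sidesteps both issues with a different device: fix $l$ and apply the quotient operator $\tilde{P}_{1l}(\partial_{\mathbf{1},t},\partial_z)$, where $\tilde{P}_{1l}(\lambda,\zeta)=\tilde{P}(\lambda,\zeta)/(\lambda-\lambda_{1l}(\zeta))$, directly to $\hat{u}$. This preserves $1/q_1$-summability in $d$, and, because $\varphi_0=\dots=\varphi_{p-2}=0$ and $\varphi_{p-1}=\varphi$, the resulting series $\hat{u}_{1l}$ solves the single first-order problem $(\partial_{\mathbf{1},t}-\lambda_{1l}(\partial_z))u_{1l}=0$, $u_{1l}(0,z)=\varphi(z)$, with initial datum exactly $\varphi$ --- no splitting of the decomposition and no inversion of the $d$'s is needed. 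One then quotes the necessity statement of \cite{Mic8} for this simple equation to get $\varphi\in\Oo^1(\hat{S}_{(d+\arg\lambda_{1l}+2n\pi)/q_1})$. Your final appeal to ``the converse direction of the corresponding theorem of \cite{Mic8}'' would be fine if quoted for the full equation with the special data, but the intermediate reasoning you wrote to reach it does not stand on its own and should be replaced by this quotient-operator argument.
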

\begin{proof}
Since $m$ is a sequence preserving summability,
 by Corollary \ref{cor:equivalence_summability} 
 it is sufficient to prove the statement for the sequence $m=\mathbf{1}=(1)_{n\in\NN}$, which is inherited from the moment function $\mathbf{1}(u)\equiv 1$ for $u\geq 0$.
 By Theorem \ref{th:gevrey} we get the decomposition of $\hat{u}$ given by (\ref{eq:decomposition}).
 Moreover, since $q_j\leq 0$ for $j=2,\dots,n$ by the same theorem we conclude that 
 \begin{equation*}
 \hat{u}_2:=\sum_{j=2}^n\sum_{l=1}^{p_j}\sum_{\alpha=1}^{p_{jl}}\hat{u}_{jl\alpha}\in\Oo_{1/\kappa}(D)[[t]]_0.
 \end{equation*}
 It means that its sum $u_2$ is convergent, hence also $\hat{u}_2\in\Oo_{1/\kappa}(D)\{t\}_{1/q_1,d}$.
 \par
 Fix $l\in\{1,\dots,p_1\}$ and $\alpha\in\{1,\dots,p_{1l}\}$. Since $\varphi_{1l\alpha}\in\Oo_{1/\kappa}^1(\hat{S}_{(d+\arg\lambda_{1l}+2n\pi)/q_1})$ for $n=0,\dots,\mu_1-1$, where $\alpha=1,\dots,p_{1l}$ and $l=1,\dots,p_1$, by \cite[Theorem 3]{Mic8}
 with $\mathfrak{m}_1(u)\equiv 1$, $\mathfrak{m}_2(u)=\Gamma(1+u)$, $s_1=0$, $s_2=1$ and $s=0$ we obtain $\hat{u}_{1l\alpha}\in\Oo_{1/\kappa}(D)\{t\}_{1/q_1,d}$. Hence we see that
 \begin{equation*}
  \hat{u}_1:=\sum_{l=1}^{p_1}\sum_{\alpha=1}^{p_{1l}}\hat{u}_{1l\alpha}\in\Oo_{1/\kappa}(D)\{t\}_{1/q_1,d}.
 \end{equation*}
It means that also $\hat{u}=\hat{u}_1+\hat{u}_2\in\Oo_{1/\kappa}(D)\{t\}_{1/q_1,d}$. Additionally $\hat{u}\in\Oo(D)[[t]]$, so finally $\hat{u}\in\Oo(D)\{t\}_{1/q_1,d}$.

The proof of the theorem in the opposite side proceeds along the same line as the proof of \cite[Theorem 6]{Mic7}. We fix $l\in\{1,\dots,p_1\}$ and we define
$\hat{u}_{1l}:=\tilde{P}_{1l}(\partial_{\mathbf{1},t},\partial_z)\hat{u}$, where
\begin{equation*}
 \tilde{P}_{1l}(\lambda,\zeta):=\tilde{P}(\lambda,\zeta)/(\lambda-\lambda_{1l}(\zeta)).
\end{equation*}
Since $\hat{u}\in\Oo(D)\{t\}_{1/q_1,d}$ and $\hat{u}_{1l}=\tilde{P}_{1l}(\partial_{\mathbf{1},t},\partial_z)\hat{u}$, we conclude that also $\hat{u}_{1l}\in\Oo_{1/\kappa}(D)\{t\}_{1/q_1,d}$.
On the other hand, since
\begin{equation*}
 (\partial_{\mathbf{1},t}-\lambda_{1l}(\partial_z))\tilde{P}_{1l}(\partial_{\mathbf{1},t},\partial_z)=\tilde{P}(\partial_{\mathbf{1},t}, \partial_z)
\end{equation*}
and $\hat{u}_{1l}(0,z)=\partial^{p-1}_{\mathbf{1},t}\hat{u}(0,z)=\varphi(z)\in\Oo(D)$, we get that $\hat{u}_{1l}$ is a formal solution of the Cauchy problem
  \begin{equation*}
    \left\{
    \begin{array}{l}
     (\partial_{\mathbf{1},t}-\lambda_{1l}(\partial_{z}))u_{1l}=0\\
     u_{1l}(0,z)=\varphi(z)\in\Oo(D).
    \end{array}
    \right.
   \end{equation*}
Hence by \cite[Theorem 4]{Mic8} with $\mathfrak{m}_1(u)\equiv 1$, $\mathfrak{m}_2(u)=\Gamma(1+u)$, $s_1=0$, $s_2=1$ and $s=0$ we conclude that $\varphi(z)\in\Oo^1(\hat{S}_{(d+\arg\lambda_{1l}+2n\pi)/q_1})$ for $n=0,\dots,\mu_1-1$, which completes the proof.
\end{proof}

In the case, when $\tilde{n}\geq 2$ it is natural to study multisummability of the solution $\hat{u}$ of (\ref{eq:CP_u}). In general the sufficient condition for the multisummability of $\hat{u}$ given in terms of the analytic properties of the initial data is not necessary, since the multisummability of $\hat{u}$ satisfying (\ref{eq:decomposition}) does not imply the summability of $\hat{u}_{jl\alpha}$ (see \cite[Example 2]{Mic7}). For this reason, as in \cite[Definition 11]{Mic7}, we define a special kind of multisummability for which that implication holds.
\begin{Def}
 Let $(d_{\tilde{n}},\dots,d_{1})\in\RR^{\tilde{n}}$ be an admissible multidirection with respect to $(1/q_{\tilde{n}},\dots,1/q_1)$. We say that $\hat{u}$ is \emph{$(1/q_{\tilde{n}},\dots,1/q_1)$-multisummable in the multidirection $(d_{\tilde{n}},\dots,d_{1})$ with respect to the decomposition (\ref{eq:decomposition})} if $\hat{u}_{jl\alpha}$ is $1/q_j$ summable in a direction $d_j$ (for $j=1,\dots,\tilde{n}$) or is convergent (for $j=\tilde{n}+1,\dots,n$), where $l=1,\dots,p_j$, $\alpha=1,\dots,p_{jl}$.  
\end{Def}

We have
\begin{Thm}
  \label{th:multisummability}
  Under the above conditions, we assume that $\tilde{n}>1$, $(d_{\tilde{n}},\dots,d_{1})\in\RR^{\tilde{n}}$ is an admissible multidirection with respect to $(1/q_{\tilde{n}},\dots,1/q_1)$ and the sequence $m=(m(n))_{n\geq 0}$ preserves summability. 
  
  If $\varphi_{\alpha}(z)\in\Oo^1(\hat{S}_{(d_{j}+\arg\lambda_{jl}+2n\pi)/q_j)})$ for $\alpha=0,\dots,p-1$, $l=1,\dots,p_j$, $n=0,\dots,\mu_j-1$ and $j=1,\dots,\tilde{n}$ then a normalised formal solution $\hat{u}$ of  (\ref{eq:CP_u}) is $(1/q_{\tilde{n}},\dots,1/q_1)$-multisummable in a multidirection $(d_{\tilde{n}},\dots,d_{1})$.
  
  In the opposite side, let us assume additionally that the initial data in (\ref{eq:CP_u}) satisfy $\varphi_0(z)=\dots=\varphi_{p-2}(z)=0$ and $\varphi_{p-1}(z)=\varphi(z)\in\Oo(D)$. If a normalized formal solution $\hat{u}$ of (\ref{eq:CP_u}) 
  is $(1/q_{\tilde{n}},\dots,1/q_1)$-multisummable in a multidirection $(d_{\tilde{n}},\dots,d_{1})$ with respect to the decomposition (\ref{eq:decomposition})
  then $\varphi(z)\in\Oo^1(\hat{S}_{(d_{j}+\arg\lambda_{jl}+2n\pi)/q_j)})$ for $l=1,\dots,p_j$, $n=0,\dots,\mu_j-1$ and $j=1,\dots,\tilde{n}$.
\end{Thm}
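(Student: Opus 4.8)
The plan is to follow the same two-step strategy as in the proof of Theorem \ref{th:summability}: first reduce to the sequence $m=\mathbf{1}$, and then invoke the multisummability results for moment differential equations from \cite{Mic8}. Since $m$ preserves summability it also preserves multisummability, so by Corollary \ref{cor:equivalence_multisummability} it is enough to prove the statement for $m=\mathbf{1}=(1)_{n\geq 0}$, which is the sequence of moments inherited from the moment function $\mathbf{1}(u)\equiv 1$. For this sequence Theorem \ref{th:gevrey} provides the decomposition (\ref{eq:decomposition}), with each $\hat{u}_{jl\alpha}$ solving the simple pseudodifferential equation (\ref{eq:u_jl}), and these solutions are what we analyse level by level.

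For the sufficiency part I would group the summands of (\ref{eq:decomposition}) by the index $j$, writing $\hat{u}=\sum_{j=1}^{\tilde{n}}\hat{u}_j+\hat{u}_{\tilde{n}+1}$, where $\hat{u}_j:=\sum_{l=1}^{p_j}\sum_{\alpha=1}^{p_{jl}}\hat{u}_{jl\alpha}$ for $j\leq\tilde{n}$ and $\hat{u}_{\tilde{n}+1}:=\sum_{j=\tilde{n}+1}^{n}\sum_{l,\alpha}\hat{u}_{jl\alpha}$. By Theorem \ref{th:gevrey} the term $\hat{u}_{\tilde{n}+1}$ is convergent (all the corresponding $q_j\leq 0$), hence $1/q_{\tilde{n}}$-summable in the direction $d_{\tilde{n}}$, so it may be absorbed into $\hat{u}_{\tilde{n}}$. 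For each fixed $j\leq\tilde{n}$, the hypothesis that the data $\varphi_\alpha$ extend to $\Oo^1$ on the sectors $\hat{S}_{(d_j+\arg\lambda_{jl}+2n\pi)/q_j}$ transfers, via the relation $\varphi_{jl\alpha}(z)=\sum_\beta d_{jl\alpha\beta}(\partial_z)\varphi_\beta(z)$ and the moment pseudodifferential operators $\lambda_{jl}(\partial_z)$, to the analogous property of $\varphi_{jl\alpha}$; then \cite[Theorem 3]{Mic8} applied with $\mathfrak{m}_1(u)\equiv 1$, $\mathfrak{m}_2(u)=\Gamma(1+u)$, $s_1=0$, $s_2=1$, $s=0$ gives $\hat{u}_{jl\alpha}\in\Oo_{1/\kappa}(D)\{t\}_{1/q_j,d_j}$, and therefore $\hat{u}_j\in\Oo_{1/\kappa}(D)\{t\}_{1/q_j,d_j}$. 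Since $(d_{\tilde{n}},\dots,d_1)$ is admissible with respect to $(1/q_{\tilde{n}},\dots,1/q_1)$, Definition \ref{df:multisummable} yields that $\hat{u}$ is $(1/q_{\tilde{n}},\dots,1/q_1)$-multisummable in the multidirection $(d_{\tilde{n}},\dots,d_1)$; and because each individual $\hat{u}_{jl\alpha}$ is summable (or convergent), this holds with respect to the decomposition (\ref{eq:decomposition}).

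For the necessity part I would argue as in the proof of \cite[Theorem 6]{Mic7}. Assuming $\hat{u}$ is multisummable with respect to (\ref{eq:decomposition}), each $\hat{u}_{jl\alpha}$ with $j\leq\tilde{n}$ is $1/q_j$-summable in the direction $d_j$. Fix $j\in\{1,\dots,\tilde{n}\}$ and $l\in\{1,\dots,p_j\}$, and put $\hat{u}_{jl}:=\tilde{P}_{jl}(\partial_{\mathbf{1},t},\partial_z)\hat{u}$ with $\tilde{P}_{jl}(\lambda,\zeta):=\tilde{P}(\lambda,\zeta)/(\lambda-\lambda_{jl}(\zeta))$. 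The operator $\tilde{P}_{jl}(\partial_{\mathbf{1},t},\partial_z)$ annihilates every summand $\hat{u}_{j'l'\alpha}$ except the single term $\hat{u}_{jl,p_{jl}}$, which it maps to a solution of $(\partial_{\mathbf{1},t}-\lambda_{jl}(\partial_z))u_{jl}=0$; since applying a moment pseudodifferential operator of moderate growth preserves $1/q_j$-summability in the direction $d_j$ (as used already in the proof of Theorem \ref{th:summability}), we get $\hat{u}_{jl}\in\Oo_{1/\kappa}(D)\{t\}_{1/q_j,d_j}$, while the assumption $\varphi_0=\dots=\varphi_{p-2}=0$, $\varphi_{p-1}=\varphi$ gives $\hat{u}_{jl}(0,z)=\varphi(z)$. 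Then \cite[Theorem 4]{Mic8} (again with $\mathfrak{m}_1(u)\equiv 1$, $\mathfrak{m}_2(u)=\Gamma(1+u)$, $s_1=0$, $s_2=1$, $s=0$) yields $\varphi(z)\in\Oo^1(\hat{S}_{(d_j+\arg\lambda_{jl}+2n\pi)/q_j})$ for $n=0,\dots,\mu_j-1$, and letting $j$ and $l$ range over all admissible values finishes the proof.

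The main obstacle I expect is the bookkeeping in the necessity direction: checking precisely that $\tilde{P}_{jl}(\partial_{\mathbf{1},t},\partial_z)$ kills exactly the ``wrong'' summands of (\ref{eq:decomposition}) while leaving one summand that still carries the full information about $\varphi$, and confirming that moment pseudodifferential operators with moderate-growth symbols preserve $k$-summability in a fixed direction. This is also precisely the place where the notion of multisummability \emph{with respect to the decomposition} is needed, since ordinary multisummability of $\hat{u}$ would not let us isolate the level-$q_j$ components (cf.\ the discussion preceding the definition). The sufficiency part is comparatively routine once Theorem \ref{th:gevrey} and \cite[Theorem 3]{Mic8} are available, the only delicate point being the admissibility bookkeeping ensuring that $\hat{u}=\sum_j\hat{u}_j$ genuinely realises multisummability in the sense of Definition \ref{df:multisummable}.
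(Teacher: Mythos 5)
Your proposal is correct, and its first step coincides exactly with the paper's: since $m$ preserves summability, Corollary \ref{cor:equivalence_multisummability} reduces everything to the sequence $m=\mathbf{1}$ inherited from the moment function $\mathbf{1}(u)\equiv 1$. Where you diverge is in the second step: the paper simply invokes the ready-made multisummability results of \cite{Mic8} (its Theorem 4, with $\mathfrak{m}_1(u)\equiv 1$, $\mathfrak{m}_2(u)=\Gamma(1+u)$, $s_1=0$, $s_2=1$, $s=0$, for the sufficiency, and its Theorem 5 for the necessity with respect to the decomposition), whereas you re-derive those statements one level lower: for sufficiency you group the decomposition (\ref{eq:decomposition}) by levels $q_j$, absorb the convergent part, and apply the simple-equation summability result \cite[Theorem 3]{Mic8} to each $\hat{u}_{jl\alpha}$; for necessity you run the annihilator argument of \cite[Theorem 6]{Mic7} with $\tilde{P}_{jl}(\partial_{\mathbf{1},t},\partial_z)$, using that it kills all summands except $\hat{u}_{jl,p_{jl}}$, that the vanishing of $\varphi_0,\dots,\varphi_{p-2}$ gives $\hat{u}_{jl}(0,z)=\varphi(z)$, and then apply \cite[Theorem 4]{Mic8} levelwise. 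This is essentially a reconstruction of the proofs behind the cited theorems of \cite{Mic8}, so it buys more self-containedness (and makes explicit why multisummability \emph{with respect to the decomposition} is the right hypothesis in the necessity part), at the cost of having to assert two facts the paper also uses implicitly elsewhere (in Theorem \ref{th:summability}): that moment pseudodifferential operators with moderate-growth symbols preserve both the $\Oo^1$-continuation properties of the data $\varphi_{jl\alpha}=\sum_\beta d_{jl\alpha\beta}(\partial_z)\varphi_\beta$ and $k$-summability in a fixed direction. Both routes are legitimate; the paper's is shorter because it outsources precisely these points to \cite{Mic8}.
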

\begin{proof}
 Since $m$ is a sequence preserving summability,
 by Corollary \ref{cor:equivalence_multisummability} 
 it is sufficient to prove the statement for the sequence $m=\mathbf{1}=(1)_{n\in\NN}$, which is inherited from the moment function $\mathbf{1}(u)\equiv 1$.
 
 If $\varphi_{\alpha}(z)\in\Oo^1(\hat{S}_{(d_{j}+\arg\lambda_{jl}+2n\pi)/q_j)})$ for $\alpha=0,\dots,p-1$, $l=1,\dots,p_j$, $n=0,\dots,\mu_j-1$ and $j=1,\dots,\tilde{n}$ then applying \cite[Theorem 4]{Mic8}  with $\mathfrak{m}_1(u)\equiv 1$, $\mathfrak{m}_2(u)=\Gamma(1+u)$, $s_1=0$, $s_2=1$ and $s=0$ we conclude that a normalised formal solution $\hat{u}$ of (\ref{eq:CP_u}) is $(1/q_{\tilde{n}},\dots,1/q_1)$-multisummable in a multidirection $(d_{\tilde{n}},\dots,d_{1})$.
 
 In the opposite side, we assume that a normalized formal solution $\hat{u}$ of (\ref{eq:CP_u})
  is $(1/q_{\tilde{n}},\dots,1/q_1)$-multisummable in a multidirection $(d_{\tilde{n}},\dots,d_{1})$ with respect to the decomposition (\ref{eq:decomposition}), and additionally that the initial data in (\ref{eq:CP_u}) satisfy $\varphi_0(z)=\dots=\varphi_{p-2}(z)=0$ and $\varphi_{p-1}(z)=\varphi(z)\in\Oo(D)$. Then by \cite[Theorem 5]{Mic8} with $\mathfrak{m}_1(u)\equiv 1$, $\mathfrak{m}_2(u)=\Gamma(1+u)$, $s_1=0$, $s_2=1$ and $s=0$ we conclude that
  $\varphi(z)\in\Oo^1(\hat{S}_{(d_{j}+\arg\lambda_{jl}+2n\pi)/q_j)})$ for $l=1,\dots,p_j$, $n=0,\dots,\mu_j-1$ and $j=1,\dots,\tilde{n}$.
\end{proof}

At the end we also find the sufficient and necessary conditions for the convergence of the formal solution $\hat{u}$ of (\ref{eq:CP_u})

\begin{Thm}
\label{th:convergent}
 If the sequence $m=(m(n))_{n\geq 0}$ preserves Gevrey order and the initial data $\varphi_j(z)$ are entire functions of exponential growth of order $1$ for $j=0,\dots,p-1$, then the formal solution $\hat{u}$ of (\ref{eq:CP_u}) is convergent.
 
 In the opposite side, we assume that the sequence $m=(m(n))_{n\geq 0}$ preserves summability, the initial data of (\ref{eq:CP_u}) satisfy conditions $\varphi_0(z)=\dots=\varphi_{p-2}(z)=0$ and $\varphi_{p-1}(z)=\varphi(z)\in\Oo(D)$ and the number $\tilde{n}$ defined by (\ref{eq:tilde_n}) is positive. Under these assumptions if the formal solution $\hat{u}$ of (\ref{eq:CP_u}) is convergent then $\varphi(z)$ is the entire function of exponential growth of order $1$.
\end{Thm}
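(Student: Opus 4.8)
The plan is to obtain both halves of the statement from results already proved in the paper, the only preliminary observation being that a formal power series in $\Oo(D)[[t]]$ is convergent precisely when it is of Gevrey order $0$ (immediate from Definition \ref{df:gevrey} with $s=0$).

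For the first half, note that ``$\varphi_j$ is an entire function of exponential growth of order $1$'' means exactly $\varphi_j\in\Oo^1(\CC)$. I would then apply Theorem \ref{th:gevrey} with $k=1$ to the normalised formal solution $\hat u$: since the extra hypothesis $\varphi_\beta\in\Oo^k(\CC)$ is met, the decomposition \eqref{eq:decomposition} has every summand $\hat u_{jl\alpha}$ of Gevrey order $\max\{q_j,0\}(1-1/k)=\max\{q_j,0\}\cdot 0=0$ with respect to $t$, hence convergent; being a finite sum of convergent series, $\hat u$ is convergent.

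For the converse I would first reduce to the model sequence $m=\mathbf 1$. Since convergence is Gevrey order $0$ and $m$ preserves summability, hence Gevrey order (Remark \ref{re:inclusion}), Corollary \ref{cor:equivalence_gevrey} with $s=0$ shows that $\hat u$ is convergent if and only if the associated normalised solution $\hat v=\Bo_{m^{-1},t}\hat u$ of \eqref{eq:CP_tilde_u} is convergent, while the normalisation $\varphi_0=\dots=\varphi_{p-2}=0$, $\varphi_{p-1}=\varphi$ is carried over unchanged. Thus it suffices to treat $m=\mathbf 1$, in which case \eqref{eq:CP_tilde_u} is the instance $\mathfrak{m}_1(u)\equiv 1$, $\mathfrak{m}_2(u)=\Gamma(1+u)$, $s_1=0$, $s_2=1$, $s=0$ of the moment Cauchy problem of \cite{Mic8}, and the required statement ($\tilde n>0$ together with convergence of $\hat v$ forces $\varphi\in\Oo^1(\CC)$) is its convergence theorem, \cite[Theorem 2]{Mic8}. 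Equivalently, staying inside the present paper: a convergent $\hat u$ is trivially $1/q_1$-summable in every direction when $\tilde n=1$, and (since each $\hat u_{jl\alpha}$ is obtained from $\hat u$ by the moment differentiation $\partial_{m,t}$ and moment pseudodifferential operators in $z$, hence is itself convergent) $(1/q_{\tilde n},\dots,1/q_1)$-multisummable with respect to \eqref{eq:decomposition} in every admissible multidirection when $\tilde n>1$; applying the ``opposite side'' of Theorem \ref{th:summability}, respectively of Theorem \ref{th:multisummability}, and letting the direction $d$ (respectively the first component $d_1$, the remaining $d_j$ translated so as to keep admissibility) run over $\RR$, one obtains $\varphi\in\Oo^1(\hat{S}_\theta)$ for every $\theta$; as $\varphi$ is single-valued near the origin and continues without branching along every ray, the monodromy theorem yields $\varphi\in\Oo^1(\CC)$.

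I do not expect any genuinely hard step here. The points that deserve a word of care are the identification of convergence with Gevrey order $0$ (which lets Theorem \ref{th:gevrey} and Corollary \ref{cor:equivalence_gevrey} be used off the shelf); the transfer of convergence from $\hat u$ to each summand $\hat u_{jl\alpha}$ of \eqref{eq:decomposition} in the converse (needed so that the hypotheses of the necessity parts of Theorems \ref{th:summability} and \ref{th:multisummability}, or of \cite[Theorem 2]{Mic8}, are genuinely satisfied); and the indispensability of the assumption $\tilde n>0$ — if $\tilde n=0$ then all $q_j\le 0$ and, by Theorem \ref{th:gevrey}, the solution $\hat u$ is convergent for \emph{every} $\varphi\in\Oo(D)$, so the converse would be vacuous.
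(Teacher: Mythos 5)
Your first half coincides with the paper's: apply Theorem \ref{th:gevrey} with $k=1$, so every $\hat u_{jl\alpha}$ in \eqref{eq:decomposition} has Gevrey order $\max\{q_j,0\}(1-1/k)=0$, hence $\hat u$ is convergent. Your opening reduction of the converse to $m=\mathbf 1$ via Corollary \ref{cor:equivalence_gevrey} is also legitimate (the paper performs the analogous reduction slightly later, via Corollary \ref{cor:equivalence_summability} applied to an auxiliary series), and your treatment of the case $\tilde n=1$ --- a convergent series is $1/q_1$-summable in every direction $d$, so the necessity part of Theorem \ref{th:summability} applied for all $d$ gives $\varphi\in\Oo^1(\hat S_\theta)$ for every $\theta$, hence $\varphi\in\Oo^1(\CC)$ --- is sound and is essentially the paper's computation.

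The gap is in your case $\tilde n>1$. To invoke the necessity part of Theorem \ref{th:multisummability} you need multisummability \emph{with respect to the decomposition} \eqref{eq:decomposition}, and for that you assert that each summand $\hat u_{jl\alpha}$ is ``obtained from $\hat u$ by $\partial_{m,t}$ and moment pseudodifferential operators in $z$, hence itself convergent.'' Nothing in the paper (or in the quoted form of \cite[Theorem 1]{Mic8}) provides such operator formulas for the individual pieces: the $\hat u_{jl\alpha}$ are defined as solutions of \eqref{eq:u_jl} with data built from the $\varphi_\beta$, and the paper explicitly warns that good behaviour of the sum $\hat u$ need not descend to the summands --- this is precisely why the notion ``multisummable with respect to the decomposition'' was introduced. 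So this step is an unproven claim, and your fallback reference to a ``convergence theorem'' \cite[Theorem 2]{Mic8} is a guess about the content of an external paper rather than an argument. The paper's proof avoids the issue and needs no case split on $\tilde n$: with the special data one forms the single series $\hat u_{11}:=\tilde P_{11}(\partial_{m,t},\partial_z)\hat u$, which \emph{is} an operator image of $\hat u$ and therefore convergent, and which solves the first-order problem \eqref{eq:u_11} with datum $\varphi$; since it is then $1/q_1$-summable in every direction, Corollary \ref{cor:equivalence_summability} plus the necessity result for the simple equation (\cite[Theorem 4]{Mic8} with $\mathfrak m_1\equiv 1$, $\mathfrak m_2=\Gamma(1+u)$) yields $\varphi\in\Oo^1(\hat S_{(d+\arg\lambda_{11}+2n\pi)/q_1})$ for every $d\in\RR$, hence $\varphi\in\Oo^1(\CC)$. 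Replacing your $\tilde n>1$ branch by this one-root argument (or proving the operator representation of the $\hat u_{jl\alpha}$ you rely on) would close the gap.
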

\begin{proof}
Applying Theorem \ref{th:gevrey} with $k=1$ we conclude that $\hat{u}$ satisfies the decomposition (\ref{eq:decomposition}) with
$\hat{u}_{jl\alpha}\in\Oo_{1/\kappa}(D)[[t]]_0$. Then also $\hat{u}\in\Oo(D)[[t]]_0$. This means that $\hat{u}$ is convergent.

To prove the second part of the theorem, observe that the condition $\tilde{n}>0$ means that $q_1>0$. 

Similarly to the proof of Theorem \ref{th:summability} (see also the proof of \cite[Theorem 6]{Mic7}) we take
$\hat{u}_{11}:=\tilde{P}_{11}(\partial_{m,t},\partial_z)\hat{u}$, where
$$
\tilde{P}_{11}(\lambda,\zeta):=\tilde{P}(\lambda,\zeta)/(\lambda-\lambda_{11}(\zeta)).
$$
Since $\hat{u}$ is convergent and $\hat{u}_{11}=\tilde{P}_{11}(\partial_{m,t},\partial_z)\hat{u}$, we get that $\hat{u}_{11}\in\Oo_{1/\kappa}(D)[[t]]$ is also convergent. It means that $\hat{u}_{11}\in\Oo_{1/\kappa}(D)\{t\}_{1/q_1,d}$ for every $d\in\RR$.
On the other hand, since
\begin{equation*}
 (\partial_{m,t}-\lambda_{11}(\partial_z))\tilde{P}_{11}(\partial_{m,t},\partial_z)=\tilde{P}(\partial_{m,t}\partial_z)
\end{equation*}
and $\hat{u}_{11}(0,z)=\partial^{p-1}_{m,t}\hat{u}(0,z)=\varphi(z)\in\Oo(D)$, we get that $\hat{u}_{11}$ is a formal solution of the Cauchy problem
  \begin{equation}
   \label{eq:u_11}
    \left\{
    \begin{array}{l}
     (\partial_{m,t}-\lambda_{11}(\partial_{z}))u_{11}=0\\
     u_{11}(0,z)=\varphi(z)\in\Oo(D).
    \end{array}
    \right.
   \end{equation}
Since the sequence $m$ preserves summability, by Corollary \ref{cor:equivalence_summability} we conclude that $\hat{v}_{11}:=\Bo_{m^{-1},t}\hat{u}_{11}\in\Oo_{1/\kappa}(D)\{t\}_{1/q_1,d}$ for every $d\in\RR$ and $\hat{v}_{11}$ is a formal solution of (\ref{eq:u_11}) with $\partial_{m,t}$ replaced by $\partial_{\mathbf{1},t}$.  
Hence, applying \cite[Theorem 4]{Mic8} with $\mathfrak{m}_1(u)\equiv 1$, $\mathfrak{m}_2(u)=\Gamma(1+u)$, $s_1=0$, $s_2=1$ and $s=0$ to this new equation we conclude that $\varphi(z)\in\Oo^1(\hat{S}_{(d+\arg\lambda_{1l}+2n\pi)/q_1})$ for $n=0,\dots,\mu_1-1$ and for every $d\in\RR$, and consequently $\varphi\in\Oo^1(\CC)$, which completes the proof.
\end{proof}

\begin{Rem}
 Observe that by Theorem \ref{th:main}, the assertions of Theorems \ref{th:gevrey}--\ref{th:convergent} hold in a particular case of $q$-difference-differential equation (\ref{eq:CP}).
\end{Rem}

\begin{Rem}
 In the similar way, using \cite{Mic9} instead of \cite{Mic8},
 we may get the characterisation of summable or multisummable formal solutions $\hat{u}$ to the inhomogeneous Cauchy problems
 \begin{equation}
 \label{eq:inhomo}
\left\{
    \begin{array}{l}
     P(D_{q,t},\partial_z)u=\hat{f}(t,z),\\  
     D^j_{q,t}u(0,z)=\varphi_j(z)\ \text{for}\ j=0,\dots,p-1,
     \end{array}
    \right.
\end{equation}
 in terms of the properties of the inhomogeneity $\hat{f}(t,z)\in\Oo(D)[[t]]$ and the Cauchy data $\varphi_j(z)\in\Oo(D)$, $j=0,\dots,p-1$.
 
 Equivalently, we may find conditions under which the $q$-integro-differential operator $P(D_{q,t},\partial_z)D_{q,t}^{-p}$ is a linear automorphism on the space of Gevrey or summable or multisummable series.
 
 We are not going to discuss about summable solutions of the inhomogeneous $q$-difference-differential equation (\ref{eq:inhomo}) in this article,
 but in future it could be a good starting point to study also some non-linear $q$-difference-differential equations from the 
 point of view of theory of summability and multisummability. 
\end{Rem}

\end{document}